%
%
%
%
%
%

%
%
%
%

\documentclass[11pt,reqno]{amsart}
\usepackage{amssymb,amsmath,color,slashbox}
\usepackage{tikz-dependency}
\usepackage{tikz}
\usepackage{enumerate}
\usepackage{hyperref}

\hyphenation{Hetyei}

\setlength{\oddsidemargin}{ -0.5cm}
\setlength{\evensidemargin}{ -0.5cm}
\textwidth 17cm


\allowdisplaybreaks

\newtheorem{theorem}{Theorem}[section]
\newtheorem{proposition}[theorem]{Proposition}
\newtheorem{lemma}[theorem]{Lemma}
\newtheorem{definition}[theorem]{Definition}

\newtheorem{corollary}[theorem]{Corollary}
\newtheorem{example}[theorem]{Example}
\newtheorem{remark}[theorem]{Remark}
\newtheorem{conjecture}[theorem]{Conjecture}
\newtheorem{question}[theorem]{Question}

\numberwithin{equation}{section}

\newcommand{\SSSS}{{\mathfrak S}}

\newcommand{\hz}{{\hat 0}}
\newcommand{\ho}{{\hat 1}}

\newcommand{\Nnn}{{\mathbb N}}

\newcommand{\Rrr}{{\mathbb R}}
\newcommand{\Zzz}{{\mathbb Z}}

\newcommand{\half}[1]{#1/2}
\newcommand{\binomial}[2]{\binom{#1}{#2}}

\DeclareMathOperator{\Asc}{Asc}
\DeclareMathOperator{\asc}{asc}

\DeclareMathOperator{\Des}{Des}
\DeclareMathOperator{\des}{des}

\DeclareMathOperator{\rank}{rank}

\newcommand{\Ass}{\mathcal{A}}
\newcommand{\Cyc}{\mathcal{C}}
\newcommand{\Perm}{\mathcal{P}}

\parskip=12pt

\begin{document}

\title[The toric $g$-vector of nestohedra]
{The toric g-vector of nestohedra}

\author{Richard EHRENBORG, G\'abor HETYEI and Margaret READDY}

\address{Department of Mathematics, University of Kentucky, Lexington,
  KY 40506-0027.\hfill\break \tt http://www.math.uky.edu/\~{}jrge/,
  richard.ehrenborg@uky.edu.}

\address{Department of Mathematics and Statistics,
  UNC-Charlotte, Charlotte NC 28223-0001.\hfill\break
\tt http://www.math.charlotte.edu/ghetyei,
ghetyei@charlotte.edu.}

\address{Department of Mathematics, University of Kentucky, Lexington,
  KY 40506-0027.\hfill\break
\tt http://www.math.uky.edu/\~{}readdy/,
margaret.readdy@uky.edu.}

\subjclass[2000]
{Primary
05E45, 
52B05, 
52B12, 
Secondary
05A05, 
05A15, 
52B15, 
52B20} 

\keywords{
Associahedron;
cyclohedron;
Dyck paths;
nestohedron;
parking functions;
parking trees;
pattern avoidance;
permutahedron;
toric $g$-vector.}

\date{\today}

\begin{abstract}
We show the entries of the toric $g$-vector of any dual simplicial polytope 
is a 
nonnegative integer linear combination of its $\gamma$-vector entries.
We give combinatorial interpretations of the toric $g$-vector of 
three classical polytopes: 
the associahedron, the cyclohedron and the permutahedron.
Each is expressed in terms of 
$123$-avoidance on a set of structures,
namely,
parking functions,
functions and  
parking trees.
Using
our  notion of parking trees,
we extend our combinatorial model to all chordal nestohedra.
As a corollary we obtain 
combinatorial proofs of nonnegativity of the
toric $g$-vector for all of these polytopes.
\end{abstract}

\maketitle

\section{Introduction}

In his 1987 paper on intersection cohomology and generalized
$h$-vectors,
Richard Stanley computed the
toric $g$-vector of the 
permutahedron up to dimension $5$ 
as a potential fruitful example for understanding this invariant;
see~\cite[Page~195]{Stanley_toric_g}.
Twenty-five years later in the second edition of his book,
{\em Enumerative Combinatorics,}
Stanley described the
toric $g$-vector of an Eulerian poset
as ``an exceedingly subtle invariant'';
see~\cite[Section~3.16]{Stanley_EC1}.
For the face lattice of rational
convex polytopes, 
Stanley
proved that 
the nonnegativity of the 
toric $g$-polynomial coefficients 
follows from the hard Lefschetz theorem
for the intersection homology of projective toric varieties~\cite[Corollary 3.2]{Stanley_toric_g}.
For general convex polytopes,
the nonnegativity of the toric $g$-polynomial
follows from Karu's
hard Lefschetz theorem for combinatorial intersection homology~\cite[Theorem 0.1]{Karu}.

In the present paper we not only
give a closed form formula for the toric $g$-vector of 
the permutahedron and 
of several other simple polytopes, but we also find a combinatorial
interpretation in each case.
The key result making such computations and
interpretations possible is Theorem~\ref{theorem_peaks}.
It expresses the entries of the toric $g$-vector of a simple polytope as
nonnegative integer linear combinations of its $\gamma$-vector entries,
an invariant of flag homology spheres, more generally,
Eulerian simplicial complexes introduced by Gal~\cite{Gal}.
This result naturally inspires us to revisit combinatorial models for the
$\gamma$-vectors of classical simple polytopes.

The $n$-dimensional cube is the most basic example
of a simple polytope
where the $\gamma$-vector 
is straightforward and the toric $g$-vector is well known;
see Corollaries~\ref{corollary_nonsingleton}
and~\ref{corollary_Denise--Simion}.
Our approach offers new enumerative results.
We give a combinatorial interpretation
of the toric $g$-vector of the associahedron
in Theorem~\ref{theorem_Margie}:  the $k$th entry
is given by the number of $123$-avoiding parking functions 
$f : [n] \longrightarrow [n]$ that have exactly $k$ ascents. 
The $\gamma$-vector of the associahedron
is discussed alongside that of the permutahedron and of the cyclohedron
in the work of Postnikov, Reiner and
Williams~\cite{Postnikov_Reiner_Williams}.
As in their results on the $\gamma$-vector,
our computation of the toric $g$-vector of the
cyclohedron is analogous to that of the associahedron.
For the cyclohedron we replace
the set of $123$-avoiding parking functions 
with the set of $123$-avoiding functions $f : [n] \longrightarrow [n]$. 

Postnikov, Reiner and
Williams~\cite{Postnikov_Reiner_Williams} expressed the $\gamma$-vector
of the permutahedron in terms of the peak statistics of all permutations
containing no double descents and no final descent.
Our key observation is that their result may
be translated into a result on the peak statistics of all plane
$0$-$1$-$2$ trees having a descending labeling on the vertices. 
The equivalence of the two statistics may be shown using a restricted
variant of the Foata--Strehl group action, first considered by
Br\"and\'en~\cite{Branden}. Having a descending vertex labeling is also
part of our definition of a {\em parking tree} that we introduce as a
simplified variant of the definitions given in work of
Yan~\cite{Yan_chapter} and inspired by an exercise of
Sagan~\cite[Ch~1.\ Exercise~(32)(c)]{Sagan}.   
In 
Theorem~\ref{theorem_Margie3} we express the toric $g$-vector of
the permutahedron in terms of the peak statistics of parking trees
representing $123$-avoiding parking functions. This approach 
extends to all {\em chordal nestohedra}, studied by Postnikov, Reiner and
Williams~\cite{Postnikov_Reiner_Williams}.

Our paper raises many open questions, the most important is the
following. It is inspired by Theorem~\ref{theorem_peaks} and the
Nevo--Petersen conjecture~\cite[Conjecture 1.4]{Nevo_Petersen} stating
that the $\gamma$-vector entries of a simplicial complex that is a flag
homology sphere satisfy the Kruskal--Katona inequalities.
\begin{conjecture}
\label{conjecture_simplicial}  
When the $\gamma$-vector entries of a simple polytope satisfy the
Kruskal--Katona inequalities, the same holds for its toric $g$-vector. 
\end{conjecture}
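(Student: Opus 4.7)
The plan is to leverage Theorem~\ref{theorem_peaks}, which expresses each entry of the toric $g$-vector as a nonnegative integer linear combination $g_k^{\mathrm{tor}} = \sum_{j} c_{k,j}\, \gamma_j$ of the $\gamma$-vector entries, where the coefficients $c_{k,j}$ depend only on the ambient dimension. The working hypothesis is that these coefficients have enough combinatorial structure to promote the Kruskal--Katona property from $\gamma$ to $g^{\mathrm{tor}}$. First I would rephrase the hypothesis: by Kruskal--Katona, there exists a simplicial complex $\Delta$ whose $f$-vector realises $\gamma$, say $f_{j-1}(\Delta) = \gamma_j$, so the goal becomes the construction of a simplicial complex $\Delta'$ with $f_{k-1}(\Delta') = g_k^{\mathrm{tor}}$.

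Next I would try to give the coefficients $c_{k,j}$ an explicit enumerative meaning. Theorem~\ref{theorem_peaks} presumably presents $c_{k,j}$ as the cardinality of a combinatorial family $\mathcal{C}_{k,j}$ (peak sets, lattice paths, or subsets indexed by $k-j$ additional choices from some fixed ground set). The natural candidate for $\Delta'$ is then built on a ground set of the form $V(\Delta) \sqcup E$, with each face $\sigma \in \Delta$ enlarged by the elements selected by a gadget $c \in \mathcal{C}_{k,\dim\sigma+1}$, and one takes the downward closure. Verifying that this yields an honest simplicial complex, and more delicately that its $f$-vector respects the shifting order required by Kruskal--Katona, is the crux. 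Equivalently, one could try to realise $\Delta'$ as the image of $\Delta$ under a carefully chosen compression or shift operator, in the spirit of the restricted Foata--Strehl action of Br\"and\'en discussed in the introduction.

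The main obstacle is that Kruskal--Katona is emphatically \emph{not} preserved under arbitrary nonnegative integer linear combinations of $f$-vectors, so the entire content of the conjecture lies in the specific shape of the $c_{k,j}$. Before attempting a general proof I would test the claim against the explicit formulas provided in the paper: the cube (Corollary~\ref{corollary_Denise--Simion}), the associahedron (Theorem~\ref{theorem_Margie}), the cyclohedron, and the permutahedron (Theorem~\ref{theorem_Margie3}). The expectation is that the binomial identities forcing each $g_k^{\mathrm{tor}}$ to satisfy Kruskal--Katona in these cases will suggest a uniform construction of $\Delta'$ from $\Delta$. A reduction to the Nevo--Petersen conjecture itself, by showing that the flag property of a combinatorial model for $\gamma$ descends to a flag property of the induced model for $g^{\mathrm{tor}}$, would be an attractive if ambitious outcome.
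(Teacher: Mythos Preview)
The statement you are addressing is a \emph{Conjecture} in the paper, not a theorem, and the paper offers no proof. In the concluding remarks the authors only sketch a heuristic direction, writing that ``perhaps one can combine the proof of Theorem~\ref{theorem_Margie3} and the constructions of Nevo and Petersen~\cite{Nevo_Petersen}''. There is therefore no paper proof against which to compare.

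Your proposal is likewise not a proof but a research plan, and you acknowledge this yourself: you correctly identify the central obstruction (Kruskal--Katona is not preserved under arbitrary nonnegative integer linear combinations of $f$-vectors, so everything hinges on the specific form of the coefficients $c_{k,j}$ coming from Theorem~\ref{theorem_peaks}), and you propose to test the special cases treated in the paper before attempting anything general. This strategy is entirely in line with, and somewhat more fleshed out than, the hint the authors give. The step you flag as ``the crux'' --- building a simplicial complex $\Delta'$ on $V(\Delta)\sqcup E$ by attaching gadgets indexed by the families $\mathcal{C}_{k,j}$ and checking that the result is downward closed with the correct face count --- is exactly the unresolved content of the conjecture, and nothing in your outline indicates how to carry it out. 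So there is no gap to name beyond the obvious one: the conjecture is open, and your proposal is a reasonable roadmap rather than a proof.
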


The paper is organized as follows.
In the preliminaries section we introduce two essential bijections.
The first, due to Krattenthaler, is between $123$-avoiding permutations
and Dyck paths. The second, due to Garsia and Haiman,
is between functions on the set $[n] = \{1, 2, \ldots, n\}$ and pairs of
compatible permutations and lattice paths.
Their important observation is that when the function
is a parking function, the associated lattice path is a Dyck path.
We also discuss the Foata--Strehl group action on
the symmetric group.

In Section~\ref{section_the_toric_g-polynomial}
we show how to compute the toric $g$-vector of a simple polytope
in terms of its $\gamma$-vector. 
This computation involves
the toric $g$-contribution polynomials $g_{n,j}(x)$
whose coefficients are described in terms of counting
peaks in Dyck paths.
In Section~\ref{section_compatible} we give two
interpretations of the coefficients of these polynomials.
One of these is in terms of ascent sets
in $123$-avoiding permutations.

In Section~\ref{section_Generating_functions}
we obtain generating functions for the
toric $g$-contribution polynomials.
We combine the bijections of Garsia--Haiman
and Krattenthaler
in Section~\ref{section_associahedron}
to prove our combinatorial
interpretation of the toric $g$-vector of the associahedron.
We modify our arguments 
to obtain a similar result for the cyclohedron
in Section~\ref{section_cyclohedron}.

Parking trees are introduced in Section~\ref{section_parking_trees}.
They form the essential structure in order to
obtain an explicit expression for
the toric $g$-polynomial of the permutahedron,
completing Stanley's quest to understand
these polynomials;
see Section~\ref{section_permutahedron}.
Finally in Section~\ref{section_chordal_nestohedra}
we extend our results to
nestohedra associated with
chordal building sets.
We end with open questions and comments
in Section~\ref{section_Concluding_remarks}.

\section{Preliminaries}
\label{section_preliminaries}

\subsection{The toric $g$-vector}
\label{subsection_The_toric_g-vector}
Following 
Stanley~\cite[Section 2]{Stanley_toric_g}
and~\cite[Section 3.16]{Stanley_EC1},
for an Eulerian poset $P$ with rank function~$\rho$,
the {\em toric $h$-} and {\em toric $g$-polynomials},
denoted
$f(P,x)$ and $g(P,x)$,
are recursively defined
as follows.

\begin{enumerate}
\item
$f({\bf 1},x) = g({\bf 1},x) = 1$,
where
${\bf 1}$ denotes the poset consisting of a single element.

\item
If $\rank P = n+1 > 0$ then 
the toric $h$-polynomial $f(P,x)$ has
degree $n$.
Writing 
$$
     f(P,x) = h_0 + h_1 x + \cdots + h_n x^n,
$$
define
$$
  g(P,x) = h_0 + (h_1 - h_0)x + (h_2 - h_1)x^2 + \cdots + 
           (h_m - h_{m-1})x^m,
$$
where
$m = \lfloor n/2 \rfloor$.

\item
If $\rank P = n+1 > 0$ then define
$$
     f(P,x) = \sum_{y \in P - \{\ho\}} g([\hz,y],x) \cdot (x-1)^{n - \rho(y)}.
$$
\end{enumerate}
The vectors
$(h_0, \ldots, h_n)$ 
and
$(g_0, \ldots, g_m) = (h_0, h_1 - h_0, h_2 - h_1, \ldots, h_m - h_{m-1})$ 
where $m = \lfloor n/2 \rfloor$
are respectively 
the {\em toric $h$-vector} 
and
the {\em toric $g$-vector} 
of $P$.

In general, the toric $h$-vector and the toric $g$-vector depend on the
{\em flag vectors} of the poset, that is,
the number $f_{S}$ of chains $\hz < t_{1} < t_{2} < \cdots < t_{k} < \ho$ satisfying
$\rho(t_{i}) = s_{i}$ for each subset
$S=\{s_{1} < s_{2} < \cdots < s_{k}\} \subseteq \{1,2,\ldots,n\}$.
The first formulas expressing the toric $h$-vector in
terms of the flag vectors are due to Bayer and
Ehrenborg~\cite{Bayer_Ehrenborg}. Stanley~\cite[Section 2]{Stanley_toric_g}
cites Kalai for observing that in the case when the poset 
obtained by removing the minimum element of an Eulerian poset $P$,
that is,
the resulting poset
$P - \{\hz\}$ is
dual simplicial, the toric $h$-polynomial depends only on the face
numbers of the underlying simplicial complex. A straightforward expression was found
by Hetyei~\cite[Corollary~6.9]{Hetyei_toric}, which we will use; see
Lemma~\ref{lemma_gh}.

We warn the reader that in this subsection we have followed Stanley's notation.
However, from this point on in the paper we only use
the toric $g$-vector and not the toric $h$-vector
and hence our notation will be consistent. Furthermore, the $h$-vector
introduced in Section~\ref{section_the_toric_g-polynomial} is the usual
$h$-vector of a simple polytope, which is also the $h$-vector of its
simplicial dual. This $h$-vector is not the same as the toric $h$-vector
introduced above.

\subsection{Permutations}

For a non-negative integer $n$, let $[n]$ denote the set $\{1,2,\ldots,n\}$.
Similarly for two integers $i \leq j$, let $[i,j]$ denote the
interval $\{i,i+1, \ldots, j\}$.

Let $\SSSS_{n}$ denote the symmetric group on $n$ elements.
We choose the underlying set to be the set $[n]$.
The index $i \in [n-1]$ is an {\em ascent} of
a permutation $\pi = (\pi(1),\pi(2),\ldots,\pi(n)) \in \SSSS_{n}$
if $\pi(i) < \pi(i+1)$, otherwise $i$ is a {\em descent}.
The ascent set and descent set of a permutation $\pi$
are given by
\begin{align*}
\Asc(\pi)
& =
\{i \in [n-1] : \pi(i) < \pi(i+1)\} ,
&
\Des(\pi)
& =
\{i \in [n-1] : \pi(i) > \pi(i+1)\} .
\end{align*}
The number of ascents and descents are denoted by
$\asc(\pi) = |\Asc(\pi)|$,
respectively
$\des(\pi) = |\Des(\pi)|$.
Finally, a permutation
$\pi \in \SSSS_{n}$
is {\em $123$-avoiding} if there is no
triple $1 \leq i_{1} < i_{2} < i_{3} \leq n$ satisfying
$\pi(i_{1})<\pi(i_{2})<\pi(i_{3})$.
For more on pattern avoidance,
see~\cite{Kitaev}.

\subsection{Lattice paths}
\label{subsection_lattice_paths}
A {\em Dyck path} of semilength $n$
is a lattice path from the origin~$(0,0)$ to $(2n,0)$
composed of $n$ up steps~$(1,1)$ and $n$ down steps~$(1,-1)$
that does not go below the horizontal axis.  
We may encode such a Dyck path with the associated
{\em Dyck word} by recording each up step by the letter~$U$ and
each down step by the letter~$D$.  
The number of Dyck paths, and hence also the number Dyck words, of
semilength~$n$ is the $n$th Catalan number
\begin{align*}
C_{n}
& =
\frac{1}{n+1} \cdot \binomial{2n}{n} .
\end{align*}
The generating function for the Catalan numbers is well-known:
\begin{align}
\label{equation_Catalan_generating_function}  
C(t) & = \sum_{n \geq 0} C_{n} \cdot t^{n} = \frac{1-\sqrt{1-4t}}{2t}.
\end{align}  
By rotating our perspective by $\pi/4$ radians counterclockwise, we may
also consider paths
from the origin to the point~$(n,n)$ taking North steps $(0,1)$
and East steps $(1,0)$ that never go below the line $y=x$
as Dyck paths. Similarly, we also consider the mirror image of such a
path that never goes above the line $y=x$ as a Dyck path.
These types of Dyck paths will be useful in
Sections~\ref{section_associahedron}
and~\ref{section_cyclohedron}.

We say that a word $v$ is a {\em factor} of the word $w$
if $w$ can be factored as $w = u \cdot v \cdot z$.
For instance, {\em peaks} in a Dyck path correspond to
factors of the form $UD$, whereas {\em valleys} correspond
to factors~$DU$.
We also say that a $UD$-word is {\em balanced}
if it contains the same number of $U$ letters as $D$ letters.

\begin{figure}[t]
\begin{center}
\begin{tikzpicture}[scale = 0.5]
\draw[step=1,gray,very thin] (0,0) grid (10,10);
\draw[thick,-] (0,0) -- (4,0) -- (4,2) -- (6,2) -- (6,5) -- (9,5) -- (9,7) -- (10,7) -- (10,10);
\draw[dashed] (0,0) -- (10,10);
\node at (0.5,-0.5) {10};
\node at (1.5,-0.5) {9};
\node at (2.5,-0.5) {8};
\node at (3.5,-0.5) {7};
\node at (4.5,-0.5) {6};
\node at (5.5,-0.5) {5};
\node at (6.5,-0.5) {4};
\node at (7.5,-0.5) {3};
\node at (8.5,-0.5) {2};
\node at (9.5,-0.5) {1};
\node at (3.5,0.5) {7};
\node at (3.5,1.5) {10};
\node at (5.5,2.5) {5};
\node at (5.5,3.5) {9};
\node at (5.5,4.5) {8};
\node at (8.5,5.5) {2};
\node at (8.5,6.5) {6};
\node at (9.5,7.5) {1};
\node at (9.5,8.5) {4};
\node at (9.5,9.5) {3};
\draw (3.5,0.5) circle (0.45);
\draw (3.5,1.5) circle (0.45);
\draw (5.5,2.5) circle (0.45);
\draw (5.5,3.5) circle (0.45);
\draw (5.5,4.5) circle (0.45);
\draw (8.5,5.5) circle (0.45);
\draw (8.5,6.5) circle (0.45);
\draw (9.5,7.5) circle (0.45);
\draw (9.5,8.5) circle (0.45);
\draw (9.5,9.5) circle (0.45);
\end{tikzpicture}
\end{center}
\caption{The Krattenthaler Dyck path
$w = U^{4} D^{2} U^{2} D^{3} U^{3} D^{2} U D^{3}$
for the $123$-avoiding permutation
$\pi=(\underline{7},10,\underline{5},9,8,\underline{2},6,\underline{1},4,3)$
where the left-to-right minima are underlined.}
\label{figure_Krattenthaler}
\end{figure}
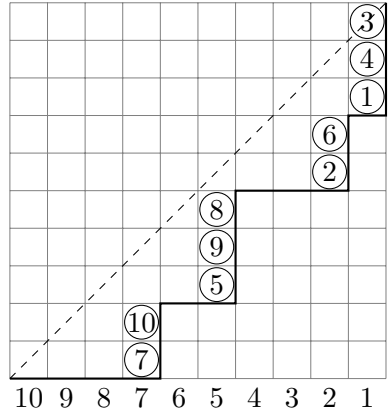

A key tool for our proofs will be a variant of the bijection between
$123$-avoiding permutations of the set $[n]$ and Dyck paths of semilength $n$
introduced by Krattenthaler~\cite{Krattenthaler}.
 An example of our bijection is shown in
Figure~\ref{figure_Krattenthaler}.  Note the lattice path never goes above
the line $y=x$. Given a $123$-avoiding permutation $\pi$ of $[n]$, we
rephrase Krattenthaler's bijection as  follows. 
\begin{enumerate}
\item Label the $U$-steps (these are the East steps in
  Figure~\ref{figure_Krattenthaler}) in decreasing order from $n$ to $1$.
\item For each left-to-right minimum $\pi(i)$, insert a $D$ step
  (North step) right after the $U$-step labeled $\pi(i)$ and label
  this $D$-step also with $\pi(i)$. By abuse of terminology we say
  that the peak is labeled $\pi(i)$. 
\item From each peak labeled $\pi(i)$, we continue with $D$ steps labeled
  $\pi(i+1), \pi(i+2),\ldots, \pi(j-1)$ where $\pi(j)$ is the next left-to-right minimum.
 \item After the $D$ step labeled $\pi(j-1)$, we continue with $U$ steps
   until we reach the next peak labeled~$\pi(j)$.  
\end{enumerate}  
Note that the complement of the left-to-right minima
of a $123$-avoiding permutation forms a decreasing sequence.
This observation is essential in the argument that the
inverse of Krattenthaler's bijection is well-defined.

We also consider lattice paths taking up steps~$(1,1)$ and down
steps~$(1,-1)$ that start at the origin and end at the lattice
point~$(n,n-2k)$ that do not go below the $x$-axis. Any such path must
contain $n-k$ up steps and $k$ down steps. They are enumerated by
\begin{align}
\label{equation_Catalan_triangle}
C(n,k)
& =
\binom{n}{k}-\binom{n}{k-1}\quad\text{for $0\leq k\leq \half{n}$}.
\end{align}  
The numbers $C(n,k)$ are the entries of the {\em Catalan triangle},
listed as sequence A008315 in OEIS~\cite{OEIS}. 

Finally we also consider the set of Dyck words of semilength $n$
that do not contain $UUU$ as a factor. The number of such words is known to
be the {\em Motzkin number}
\begin{align}
M_{n}
& =
\sum_{j=0}^{\lfloor n/2\rfloor}
\binom{n}{2j} \cdot C_{j} .
\label{equation_Motzkin}
\end{align}
By definition, the Motzkin number $M_{n}$ is the number of {\em Motzkin
  paths of length $n$} from $(0,0)$ to $(n,0)$ composed of up
steps~$(1,1)$, down steps~$(1,-1)$ and horizontal steps $(1,0)$ that
never go below the horizontal axis.
The parameter $j$ in~\eqref{equation_Motzkin}
is the number of up steps (and down steps) in the path.
A bijection between Motzkin paths of
length $n$ and $UUU$-avoiding Dyck paths of semilength $n$ is outlined
in the entry of sequence A001006 in OEIS~\cite{OEIS}:
replace each $UUD$ factor
in the Dyck word with an up step, then replace each remaining
$UD$ factor with a horizontal step, and finally the  remaining $D$ steps
are unchanged.
From this discussion we have part (a) of the following lemma.
\begin{lemma}
\begin{itemize}
\item[(a)]
The number of $UUU$-avoiding Dyck paths of semilength $n$
having exactly $j$ factors of $UU$ is given by
$\binom{n}{2j} \cdot C_{j}$.
\item[(b)]
The number of balanced
$UD$-words with $n$ $U$'s and $n$ $D$'s, having no $UUU$ factors,
ending with a $D$ and having exactly $j$ $UU$ factors is
$\binom{n}{2j} \cdot \binom{2j}{j}$.
\end{itemize}
\label{lemma_j_UU}
\end{lemma}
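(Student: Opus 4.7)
The plan is to handle the two parts by elementary counting, leveraging the Motzkin bijection described in the excerpt for (a) and a direct decomposition into maximal U-runs for (b).

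\textbf{Part (a).} My first step is to observe that in a $UUU$-avoiding Dyck word every $UU$ factor is immediately followed by a $D$: a subsequent $U$ would create the forbidden $UUU$, and the word cannot end in $UU$ because any nonempty Dyck word of semilength $n$ ends in $D$. Hence $UU$ factors and $UUD$ factors coincide. Under the Motzkin bijection already outlined in the excerpt, $UUD$ factors of the Dyck word correspond precisely to the up-steps of the associated Motzkin path, so I need only count Motzkin paths of length $n$ with exactly $j$ up-steps. Such a path has $j$ down-steps and $n-2j$ horizontal steps; I choose the $2j$ non-horizontal positions out of $n$ in $\binom{n}{2j}$ ways, then arrange these as a Dyck path of semilength $j$ in $C_{j}$ ways, yielding the claimed formula.

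\textbf{Part (b).} Here the non-negativity constraint is dropped, so I would argue directly by the structure of maximal U-runs. Since $UUU$ is forbidden, each maximal U-run has length $1$ or $2$, and the number of $UU$ factors equals the number of length-$2$ runs. If $j$ of the runs are doubles, then the total number of U-runs is $m = n - j$, and I can choose which of them are doubles in $\binom{n-j}{j}$ ways. It remains to distribute the $n$ D-letters into the $m+1$ slots before, between, and after the runs, subject to two constraints: each inter-run slot contains at least one $D$ (to keep consecutive U-runs maximal), and the final slot contains at least one $D$ (so the word ends in $D$). Subtracting $1$ from each of these $m$ constrained slots and applying stars-and-bars yields $\binom{n}{j}$ distributions. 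A direct simplification of $\binom{n-j}{j}\binom{n}{j}$ produces the claimed form $\binom{n}{2j}\binom{2j}{j}$.

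\textbf{Main obstacle.} Neither claim is deep; the difficulties are purely in bookkeeping. For (a), the one thing to check is that the $UU$-factor statistic transports to the up-step statistic (as opposed to, say, the count of up- plus horizontal steps) under the Motzkin bijection. For (b), the interaction between the ``ends in $D$'' condition and the maximality of the U-runs dictates which slots require a strict lower bound, and this is the only subtle point.
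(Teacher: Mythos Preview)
Your proof is correct. For part~(a) you follow the paper's argument exactly: the Motzkin bijection sends $UUD$ factors to up steps, and Motzkin paths of length $n$ with $j$ up steps number $\binom{n}{2j}C_j$. For part~(b), however, the paper does \emph{not} switch to a run decomposition; it simply reuses the same bijection, noting that once the Dyck constraint is dropped the image becomes the set of all (unrestricted) lattice paths from $(0,0)$ to $(n,0)$ with up, down, and horizontal steps, and those with exactly $j$ up steps number $\binom{n}{2j}\binom{2j}{j}$. Your maximal-run argument is a legitimate alternative---the identity $\binom{n-j}{j}\binom{n}{j}=\binom{n}{2j}\binom{2j}{j}$ is immediate from expanding both sides---and has the virtue of being self-contained; the paper's route is a touch more economical since a single bijection handles both parts at once.
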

Part (b) of this lemma follows by the same bijection.
Instead of Motzkin paths we consider all possible
paths from $(0,0)$ to $(n,0)$ consisting of up, down
and horizontal steps with no restriction.

\subsection{Parking functions}
\label{subsection_parking_functions}

\begin{definition}
For a function $f: [n] \longrightarrow [n]$
we have the following three notions:
\begin{itemize}
\item[(i)]
The function $f$ is a {\em parking function}
if the fiber $f^{-1}([k])$ has at least $k$ elements
for $1 \leq k \leq n$.
\item[(ii)]
The function $f$ is {\em $123$-avoiding} if there is no triple
$1 \leq i_{1} < i_{2} < i_{3} \leq n$ such that $f(i_{1}) \leq f(i_{2}) \leq f(i_{3})$.
\item[(iii)]
The function $f$ has an {\em ascent} at position $i$ if
$1 \leq i \leq n-1$ and $f(i) \leq f(i+1)$.
\end{itemize}
\end{definition}
The study of parking functions originated with
Konheim and Weiss~\cite{Konheim_Weiss}.
Note that parts~(ii) and~(iii) of the above definition
are extensions of permutation notions to functions.

\begin{figure}[t]
\begin{center}
\begin{tikzpicture}[scale = 0.5]
\draw[step=1,gray,very thin] (0,0) grid (10,10);
\draw[thick,-] (10,10) -- (9,10) -- (9,9) -- (6,9) -- (6,7) -- (5,7) -- (5,6) -- (4,6) -- (4,5) -- (3,5) -- (3,4) -- (2,4) -- (2,2) -- (0,2) -- (0,0);
\draw[dashed] (0,0) -- (10,10);
\node at (0.5,0.5) {7};
\node at (0.5,1.5) {10};
\node at (2.5,2.5) {5};
\node at (2.5,3.5) {9};
\node at (3.5,4.5) {8};
\node at (4.5,5.5) {2};
\node at (5.5,6.5) {6};
\node at (6.5,7.5) {1};
\node at (6.5,8.5) {4};
\node at (9.5,9.5) {3};

\node at (0.5,-0.5) {1};
\node at (1.5,-0.5) {2};
\node at (2.5,-0.5) {3};
\node at (3.5,-0.5) {4};
\node at (4.5,-0.5) {5};
\node at (5.5,-0.5) {6};
\node at (6.5,-0.5) {7};
\node at (7.5,-0.5) {8};
\node at (8.5,-0.5) {9};
\node at (9.5,-0.5) {10};

\draw[thick,->] (-12,5.5) node[above]{$1$} -- (-12,4.5) node[below]{$7$};
\draw[thick,->] (-11,5.5) node[above]{$2$} -- (-11,4.5) node[below]{$5$};
\draw[thick,->] (-10,5.5) node[above]{$3$} -- (-10,4.5) node[below]{$10$};
\draw[thick,->] (-9,5.5) node[above]{$4$} -- (-9,4.5) node[below]{$7$};
\draw[thick,->] (-8,5.5) node[above]{$5$} -- (-8,4.5) node[below]{$3$};
\draw[thick,->] (-7,5.5) node[above]{$6$} -- (-7,4.5) node[below]{$6$};
\draw[thick,->] (-6,5.5) node[above]{$7$} -- (-6,4.5) node[below]{$1$};
\draw[thick,->] (-5,5.5) node[above]{$8$} -- (-5,4.5) node[below]{$4$};
\draw[thick,->] (-4,5.5) node[above]{$9$} -- (-4,4.5) node[below]{$3$};
\draw[thick,->] (-3,5.5) node[above]{$10$} -- (-3,4.5) node[below]{$1$};
\node at (-12.9,5) {$f$:};
\end{tikzpicture}
\end{center}
\caption{The Dyck path of Garsia and Haiman  representing the parking
function $f = (7,5,10,7,3,6,1,4,3,1)$.
The associated permutation is
$\pi = (\underline{7},10,\underline{5},9,8,\underline{2},6,\underline{1},4,3)$
and the Dyck word is
$v = U^{2} D^{2} U^{2} D U D U D U D U^{2} D^{3} U D$.}
\label{figure_Garsia_Haiman}
\end{figure}
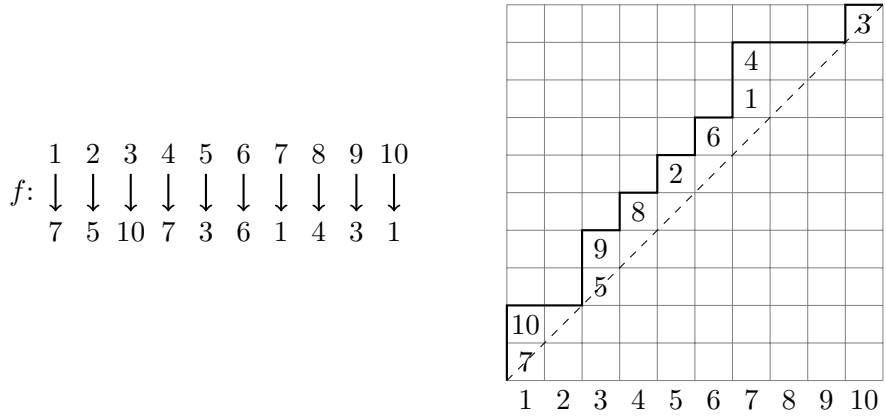

We say that a balanced word~$v$ of semilength $n$ and a permutation
$\pi \in \SSSS_{n}$ are {\em compatible} if
\begin{align*}
v
& =
U^{q_{1}} D
U^{q_{2}} D
\cdots
U^{q_{n}} D
&
\Des(\pi)
& \subseteq
\{q_{1}, q_{1}+q_{2}, \ldots, q_{1}+\cdots+q_{n-1}\} ,
\end{align*}
where $q_{1}, q_{2}, \ldots, q_{n}$
are nonnegative integers with sum $n$.
Garsia and Haiman~\cite[pages~226--227]{Garsia_Haiman} introduced a bijection between
functions $f : [n] \longrightarrow [n]$ and
the set
\begin{align*}
\{(\pi,v) : \pi \in \SSSS_{n}, v \text{ balanced $UD$-word},
\pi \text{ and } v \text{ compatible}\} .
\end{align*}
Given a function $f$, the balanced word $v$ is given by
$U^{q_{1}} D U^{q_{2}} D \cdots U^{q_{r}} D$
where $q_{i}$ is the cardinality of the fiber $f^{-1}(i)$,
that is, $q_{i} = |f^{-1}(i)|$.
Furthermore,
let $\tau_{i}$ be the list of the elements in the fiber $f^{-1}(i)$
in increasing order.
The permutation $\pi$ is then
the concatenation of the $n$ lists
$\tau_{1}$ through~$\tau_{n}$.

We visualize this bijection as follows;
see Figure~\ref{figure_Garsia_Haiman}.
We will denote each vertical step $(0,1)$ by the letter~$U$
and each horizontal step $(1,0)$ by the letter~$D$.
\begin{enumerate}
\item
Associate the element $i\in [n]$ in the range of
$f$ to the interval $(i-1,i)$ on the $x$-axis.
\item
Write the elements of the fiber $f^{-1}(i)$ in upward increasing
order as labels of the $U$
steps whose horizontal coordinate is $i-1$. These steps are
consecutive and we set the vertical coordinate of the first such $U$
step to be $|f^{-1}([i-1])|$.
\item Connect the resulting runs of $U$ steps with $D$ steps.
\item Finally, add enough $D$ steps after the last run of $U$ steps
so that the path ends at $(n,n)$.  
\end{enumerate}
Garsia and Haiman showed that a function $f$ is a
parking function if and only if the balanced word~$v$ is
a Dyck word.
In Figure~\ref{figure_Garsia_Haiman}
this is the lattice path occurring weakly above the line $y=x$.

\begin{definition}
Let $F = \{f_{0},f_{1},\ldots\}$ be an infinite alphabet and let the weight
of the letter $f_{q}$ be $w(f_{q}) = q-1$ for $q \geq 0$.
Furthermore, the weight of a word $v = v_{1} \cdots v_{k}$
with letters from this alphabet 
is the sum $w(v)=w(v_{1})+\cdots+w(v_{k})$.
A word $v = v_{1} \cdots v_{m}$ is a 
{\em \L ukasziewicz word}
if the weight of each initial factor
satisfies $w(v_{1} \cdots v_{k}) \geq 0$ for $k<m$ and $w(v)=-1$.  
\end{definition}  

\begin{lemma}
The map sending the word $U^{q_{1}} D U^{q_{2}} D \cdots
U^{q_{n}} D$ of length $2n$ to the word
$f_{q_{1}}f_{q_{2}}\cdots f_{q_{n}} f_0$ is a bijection between Dyck
words of length $2n$ and \L ukasziewicz words of length $n+1$.
\end{lemma}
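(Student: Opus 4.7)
The plan is to verify both directions of the bijection by a direct dictionary between the Dyck inequalities on $U,D$-prefixes and the \L ukasziewicz inequalities on weighted prefixes.

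First I would observe that every Dyck word $w$ of semilength $n$ admits a unique factorization $w = U^{q_1} D \, U^{q_2} D \cdots U^{q_n} D$, where $q_1,\ldots,q_n \geq 0$ and $\sum_{i=1}^n q_i = n$: this is simply the factorization obtained by cutting $w$ immediately after each of its $n$ copies of $D$. Thus the map is well defined at the level of factorizations, and its image is the word $v = f_{q_1} f_{q_2} \cdots f_{q_n} f_0$ of length $n+1$.

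Next I would check that $v$ is a \L ukasziewicz word. The total weight is
\[
w(v) = \sum_{i=1}^n (q_i - 1) + (-1) = n - n - 1 = -1,
\]
as required. For an initial factor of length $k \leq n$, the crucial identity is
\[
w(f_{q_1} \cdots f_{q_k}) \;=\; \sum_{i=1}^k q_i - k \;=\; \#U(u_k) - \#D(u_k),
\]
where $u_k = U^{q_1} D \cdots U^{q_k} D$ is the corresponding prefix of $w$. Since $w$ is a Dyck word, the right-hand side is nonnegative, giving the required \L ukasziewicz inequality $w(f_{q_1} \cdots f_{q_k}) \geq 0$ for all $k < n+1$.

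For invertibility, I would start with an arbitrary \L ukasziewicz word $f_{p_1} f_{p_2} \cdots f_{p_{n+1}}$ of length $n+1$. The total-weight condition gives $\sum_{i=1}^{n+1} p_i = n$, while the \L ukasziewicz inequality at $k = n$ forces $\sum_{i=1}^n p_i \geq n$; combining these with $p_{n+1} \geq 0$ yields $p_{n+1} = 0$ and $\sum_{i=1}^n p_i = n$. Hence the word lies in the image of the map, with unique preimage $U^{p_1} D \cdots U^{p_n} D$. It remains to check this preimage is a Dyck word: a prefix ending in the middle of a $U$-run, say $U^{p_1} D \cdots U^{p_{k-1}} D U^{j}$ with $0 \leq j \leq p_k$, has $U$-excess $\sum_{i=1}^{k-1} p_i + j - (k-1) \geq \sum_{i=1}^{k-1} p_i - (k-1) = w(f_{p_1}\cdots f_{p_{k-1}}) \geq 0$, and a prefix ending at a $D$ equals the \L ukasziewicz weight of the corresponding initial factor, hence is also nonnegative.

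Since each step is an equivalence, no real obstacle arises; the entire argument is just a translation between the two families of prefix inequalities, with the final letter $f_0$ absorbing the overall weight deficit of $-1$.
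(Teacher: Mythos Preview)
Your proof is correct. The paper does not actually prove this lemma: it simply states ``The straightforward verification is left to the reader.'' You have supplied precisely that straightforward verification, carrying out the direct translation between the Dyck prefix inequalities and the \L ukasziewicz prefix-weight inequalities, together with the observation that the final letter of any \L ukasziewicz word of length $n+1$ is forced to be $f_0$.
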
  
The straightforward verification is left to the reader.

\subsection{The restricted Foata--Strehl action}
\label{subsection_Foata_Strehl}

Some of our key results in Sections~\ref{section_permutahedron}
and~\ref{section_chordal_nestohedra} make use of a restriction of the
Foata--Strehl action~\cite{Foata_Strehl} on the set of permutations of
$[n]$ that was first considered by Br\"and\'en~\cite{Branden}, albeit in
a slightly modified form. 
We review these actions in terms of the {\em Foata--Strehl tree}
representation of the permutations of $[n]$.

\begin{definition}
\label{definition_increasing_vertex_labeling}  
A rooted tree on $n$ vertices has an {\em increasing vertex labeling}
if its vertices are bijectively labeled with the elements of an
$n$-element totally ordered set in such a way that the label on each
child is greater than the label on its parent.  A {\em Foata--Strehl
  tree on $n$ vertices} is a special rooted tree with an increasing
vertex labeling where every vertex has at most two children: at most
one left child and at most one right child.  
\end{definition}  

Figure~\ref{figure_Foata--Strehl_trees}
displays three Foata--Strehl trees.
These trees are called {\em increasing binary trees}
in~\cite[Section~1.5]{Stanley_EC1}.
We prefer the above terminology, because some sources
insist that every non-leaf vertex should have two children in a binary
tree, and they would call rooted trees of the above type
{\em $0$-$1$-$2$ trees}.
\begin{definition}
A $0$-$1$-$2$ tree is a rooted tree in which each vertex has at most $2$
children. 
\end{definition}  

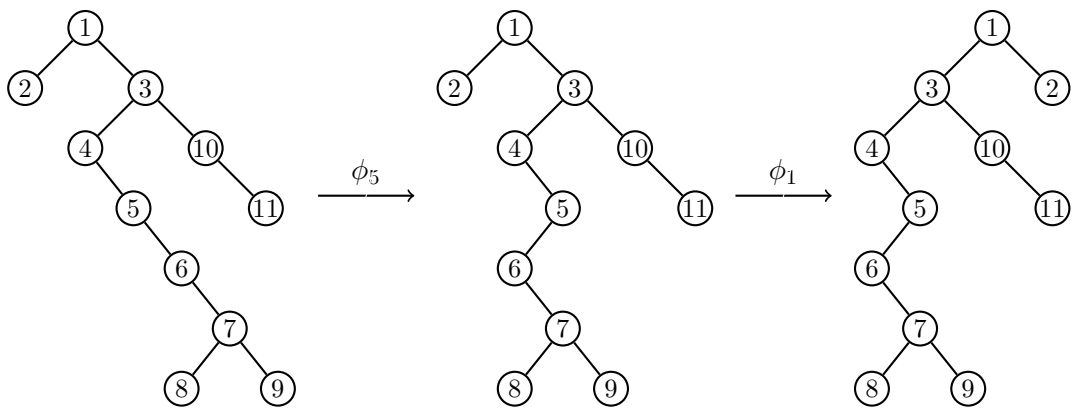
\begin{figure}[t]
\begin{center}
\tikzstyle{place}=[circle,draw=black,thick,
                   inner sep=0pt,minimum size=4.5mm]
\begin{tikzpicture}[scale = 0.8]
\node (one) at (0,6) [place] {\small ${1}$};
\node (two) at (-1,5) [place] {\small ${2}$};
\node (three) at (1,5) [place] {\small ${3}$};
\node (four) at (0,4) [place] {\small ${4}$};
\node (ten) at (2,4) [place] {\small ${10}$};
\node (five) at (0.8,3) [place] {\small ${5}$};
\node (six) at (1.6,2) [place] {\small ${6}$};
\node (eleven) at (3,3) [place] {\small ${11}$};
\node (eight) at (1.6,0) [place] {\small ${8}$};
\node (nine) at (3.2,0) [place] {\small ${9}$};
\node (seven) at (2.4,1) [place] {\small ${7}$};

\draw[-,thick] (one) -- (two);
\draw[-,thick] (one) -- (three);
\draw[-,thick] (three) -- (four);
\draw[-,thick] (three) -- (ten);
\draw[-,thick] (four) -- (five) -- (six) -- (seven) -- (nine);
\draw[-,thick] (ten) -- (eleven);
\draw[-,thick] (seven) -- (eight);
\end{tikzpicture}
\begin{tikzpicture}[scale = 0.8]
\draw[->,thick] (-0.8,0) --node[above]{$\phi_{5}$} (0.8,0);
\draw[rectangle,white] (-1,-3.5) -- (1,3.5);
\end{tikzpicture}
\begin{tikzpicture}[scale = 0.8]
\node (one) at (0,6) [place] {\small ${1}$};
\node (two) at (-1,5) [place] {\small ${2}$};
\node (three) at (1,5) [place] {\small ${3}$};
\node (four) at (0,4) [place] {\small ${4}$};
\node (ten) at (2,4) [place] {\small ${10}$};
\node (five) at (0.8,3) [place] {\small ${5}$};
\node (six) at (0,2) [place] {\small ${6}$};
\node (eleven) at (3,3) [place] {\small ${11}$};
\node (eight) at (0,0) [place] {\small ${8}$};
\node (nine) at (1.6,0) [place] {\small ${9}$};
\node (seven) at (0.8,1) [place] {\small ${7}$};

\draw[-,thick] (one) -- (two);
\draw[-,thick] (one) -- (three);
\draw[-,thick] (three) -- (four);
\draw[-,thick] (three) -- (ten);
\draw[-,thick] (four) -- (five) -- (six) -- (seven) -- (nine);
\draw[-,thick] (ten) -- (eleven);
\draw[-,thick] (seven) -- (eight);
\end{tikzpicture}
\begin{tikzpicture}[scale = 0.8]
\draw[->,thick] (-0.8,0) --node[above]{$\phi_{1}$} (0.8,0);
\draw[rectangle,white] (-1,-3.5) -- (1,3.5);
\end{tikzpicture}
\begin{tikzpicture}[scale = 0.8]
\node (one) at (2,6) [place] {\small ${1}$};
\node (two) at (3,5) [place] {\small ${2}$};
\node (three) at (1,5) [place] {\small ${3}$};
\node (four) at (0,4) [place] {\small ${4}$};
\node (ten) at (2,4) [place] {\small ${10}$};
\node (five) at (0.8,3) [place] {\small ${5}$};
\node (six) at (0,2) [place] {\small ${6}$};
\node (eleven) at (3,3) [place] {\small ${11}$};
\node (eight) at (0,0) [place] {\small ${8}$};
\node (nine) at (1.6,0) [place] {\small ${9}$};
\node (seven) at (0.8,1) [place] {\small ${7}$};

\draw[-,thick] (one) -- (two);
\draw[-,thick] (one) -- (three);
\draw[-,thick] (three) -- (four);
\draw[-,thick] (three) -- (ten);
\draw[-,thick] (four) -- (five) -- (six) -- (seven) -- (nine);
\draw[-,thick] (ten) -- (eleven);
\draw[-,thick] (seven) -- (eight);
\end{tikzpicture}
\end{center}
\caption{The Foata--Strehl trees for the permutations
$\tau = (2$, $1$, $4$, $5$, $6$, $8$, $7$, $9$, $3$, $10$, $11)$,
$\phi_{5}\tau$ and $\phi_{1}\phi_{5}\tau$.}
\label{figure_Foata--Strehl_trees}
\end{figure}

Even after disregarding the vertex labeling, a Foata--Strehl tree is not
immediately identifiable with a plane $0$-$1$-$2$ tree, because there is
no distinction between a right child and a left child in a plane tree
if the child is the only child of its parent. In this paper we will use
the following identification.

\begin{definition}
\label{definition_right-adjusted}  
A Foata--Strehl tree is {\em right-adjusted} if no vertex has
an only child that is a left child. Given a plane $0$-$1$-$2$ tree 
with an increasing vertex labeling, we identify it with
the right-adjusted Foata--Strehl tree obtained by designating each only
child to be a right child. 
\end{definition}

Clearly there is a bijection between labeled plane $0$-$1$-$2$ trees
with an increasing vertex labeling and right-adjusted Foata--Strehl
trees. As explained in~\cite[Section~1.5]{Stanley_EC1}, there is a bijection
between Foata--Strehl trees with $n$-vertices and permutations of $[n]$,
defined as follows.

\begin{definition}
Given a permutation $\pi=\pi(1)\cdots\pi(n)$ of letters of an $n$-element
totally ordered set, we define its Foata--Strehl tree recursively as
follows:
\begin{enumerate}
\item Label the root of the tree with $\pi(m)=\min
  \{\pi(1),\ldots,\pi(n)\}$.
\item Define the left, respectively right subtree of the root as the
  Foata--Strehl tree of the permutation $\pi(1)\cdots\pi(m-1)$,
  respectively $\pi(m+1)\cdots \pi(n)$. (Either or both subtrees may be
  empty.) The root of the left
  (respectively right) subtree is the left (respectively right) child of
  the vertex labeled $\pi(m)$.   
\end{enumerate}
\end{definition}  

Conversely, the permutation associated to a Foata--Strehl tree is
obtained by reading off the labels on the vertices using the {\em
  inorder traversal}. This recursively defined process calls for reading
the labels in the left subtree of the root, then the label of the root,
and then the labels in the right subtree of the root. For example the
Foata--Strehl tree shown on the left-hand side of
Figure~\ref{figure_Foata--Strehl_trees} represents the permutation
$(2,1,4,5,6,8,7,9,3,10,11)$.  

The {\em Foata--Strehl action} is most easily defined on the
Foata--Strehl trees representing the permutations: the operation $\phi_x$
exchanges the left and right subtrees of the vertex labeled $x$. For
example
$$
\phi_{1}\phi_{5}(2,1,4,5,6,8,7,9,3,10,11)
=\phi_{1} (2,1,4,6,8,7,9,5,3,10,11)
=(4,6,8,7,9,5,3,10,11,1,2),
$$
where the associated trees are displayed in
Figure~\ref{figure_Foata--Strehl_trees}.
Clearly $\phi_{i}$ and $\phi_{j}$ commute.
Hence the Foata--Strehl action is a
${\mathbb Z}_{2}^{n-1}$-action on the set of all permutations of
the set $[n]$.
The numbers of the orbits are the tangent and secant numbers, and
{\em Andr\'e permutations} (of various kinds) may be used as a set of distinct
orbit representatives.

Motivated by Br\"and\'en's definition~\cite{Branden}, we define the 
{\em restricted Foata--Strehl action} $\psi$ as follows: set $\psi_x=\phi_x$
if the vertex labeled $x$ has exactly one child, and set $\psi_x$ to
be the identity operation otherwise. The permutations represented
by right-adjusted Foata--Strehl trees are a natural choice of distinct orbit
representatives for this action. Br\"and\'en's {\em modified Foata--Strehl
  action} is obtained from this restricted action by replacing each
label $i$ with $n+1-i$, thus turning the Foata--Strehl trees into {\em
  decreasing trees}: the label on each child is less than the label on
its parent. 
Br\"and\'en's paper avoids introducing tree representations.
It instead operates with descents and ascents.
\begin{definition}
Let $\pi=\pi(1)\cdots\pi(n)$ be a permutation of $[n]$. The index $i<n$
is a {\em descent}, respectively an {\em ascent}, if $\pi(i)>\pi(i+1)$,
respectively $\pi(i)<\pi(i+1)$ holds. The index $2 \leq i \leq n-1$
is a {\em peak}, a {\em valley},
a {\em double descent}, or a {\em double ascent},
respectively, if $\pi(i-1)<\pi(i)>\pi(i+1)$,
$\pi(i-1)>\pi(i)<\pi(i+1)$,  $\pi(i-1)>\pi(i)>\pi(i+1)$, or
$\pi(i-1)<\pi(i)<\pi(i+1)$ hold, respectively.      
\end{definition}  

The equivalence between the above
construction and Br\"and\'en's is straightforward in light of the
following facts; see~\cite[Section~1.5]{Stanley_EC1}.
\begin{lemma}\label{lemma_children} Consider a permutation
  $\pi=\pi(1)\cdots\pi(n)$. The index 
  $2 \leq i \leq n-1$ is a peak,   a valley, a double descent or a
  double ascent, respectively, if the   vertex labeled $\pi(i)$ in its
  Foata--Strehl tree has no children, two children, only a left child,
  only a right child, respectively. 
\end{lemma}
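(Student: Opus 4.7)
The plan is to read off the children of each vertex directly from the recursive definition of the Foata--Strehl tree, and then match the four possible local patterns of $\pi$ with the four possibilities for the child structure.

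The first step is a structural observation, proved by induction on $n$: the subtree of the Foata--Strehl tree rooted at the vertex labeled $\pi(i)$ is the inorder image of a contiguous factor $\pi(a)\pi(a+1)\cdots\pi(b)$ of $\pi$ with $a \leq i \leq b$, and $\pi(i) = \min\{\pi(a),\ldots,\pi(b)\}$. This is immediate from the construction: at the top level we split around the global minimum, and the prefix and suffix are handed to the two subtrees, where the same property holds by the inductive hypothesis.

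The second step pins down the endpoints of this interval in order to read off whether each child exists. From the minimum property, $a-1 = 0$ or $\pi(a-1) < \pi(i)$, and all values in positions $a,\ldots,i-1$ exceed $\pi(i)$; symmetrically $b+1 = n+1$ or $\pi(b+1) < \pi(i)$, and all values in positions $i+1,\ldots,b$ exceed $\pi(i)$. Consequently, for $i \geq 2$ the vertex labeled $\pi(i)$ has a left child iff $a \leq i-1$, which happens iff $\pi(i-1) > \pi(i)$; and for $i \leq n-1$ it has a right child iff $b \geq i+1$, which happens iff $\pi(i+1) > \pi(i)$.

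Given this pair of equivalences, the lemma reduces to a case check on the signs of $\pi(i)-\pi(i-1)$ and $\pi(i+1)-\pi(i)$. If $\pi(i-1)<\pi(i)>\pi(i+1)$, both children fail to exist, so $\pi(i)$ is a leaf and $i$ is a peak; if $\pi(i-1)>\pi(i)<\pi(i+1)$, both children are present, matching the valley case; if $\pi(i-1)>\pi(i)>\pi(i+1)$, only the left child exists, matching the double descent case; and if $\pi(i-1)<\pi(i)<\pi(i+1)$, only the right child exists, matching the double ascent case.

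The only delicate point is the structural claim in the first step; once the interval $[a,b]$ is correctly identified with the subtree rooted at $\pi(i)$, everything else is a routine comparison of adjacent values. This is why the paper can safely defer to Stanley's textbook rather than reproving it.
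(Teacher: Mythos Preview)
Your proof is correct. The paper does not supply its own argument for this lemma; it simply cites Stanley~\cite[Section~1.5]{Stanley_EC1}, as you yourself note in your final sentence. Your structural observation---that the subtree rooted at $\pi(i)$ corresponds to a maximal contiguous factor $\pi(a)\cdots\pi(b)$ with $\pi(i)$ as its minimum, bounded on each side either by the end of the word or by a smaller value---is exactly the standard way this fact is established, and the case analysis that follows is routine and accurate.
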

Lemma~\ref{lemma_children} implies the following characterization of the
orbit representatives of the restricted Foata--Strehl action. 
\begin{corollary}
\label{corollary_right-adjusted}  
The Foata--Strehl tree of a permutation $\pi$ is right-adjusted if and
only if the permutation $\pi$ has no double descents and no final
descent.  
\end{corollary}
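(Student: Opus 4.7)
My plan is to reduce the corollary to Lemma~\ref{lemma_children} for the interior indices $2\leq i\leq n-1$ and to handle the two boundary indices $i=1$ and $i=n$ directly. Being right-adjusted is the condition that no vertex of the tree has an only child that is a left child. For $2\leq i\leq n-1$, Lemma~\ref{lemma_children} already asserts that the vertex labeled $\pi(i)$ has only a left child precisely when $i$ is a double descent of $\pi$, so those indices contribute exactly the ``no double descents'' clause.

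I would then analyze the two boundary vertices using the inorder traversal. Because $\pi(1)$ is the leftmost vertex in the inorder reading, it has no left child at all, and hence never violates the right-adjusted condition. Similarly, $\pi(n)$ is the rightmost vertex and has no right child, but it may or may not have a left child. I would argue that $\pi(n)$ has a left child if and only if $\pi(n-1)>\pi(n)$, i.e., $\pi$ has a final descent. If $\pi(n)$ has a left subtree $T$, then $\pi(n-1)$ is the rightmost vertex of $T$, hence a strict descendant of $\pi(n)$; since the tree is increasing this forces $\pi(n-1)>\pi(n)$. Conversely, if $\pi(n)$ has no left child, then in the inorder traversal $\pi(n-1)$ must be the nearest proper ancestor of $\pi(n)$ having $\pi(n)$ in its right subtree, and such an ancestor carries a label smaller than $\pi(n)$, giving $\pi(n-1)<\pi(n)$.

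Combining the three cases, the Foata--Strehl tree fails to be right-adjusted exactly when $\pi$ has either a double descent at some interior position or a final descent, which is the statement of the corollary. I expect the main obstacle to be the boundary analysis at $i=n$, since Lemma~\ref{lemma_children} stops short of it; the key input is the standard fact about inorder predecessors in binary trees, which I would either cite directly or verify by a short induction using the recursive construction of the Foata--Strehl tree.
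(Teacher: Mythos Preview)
Your proposal is correct and follows essentially the same approach as the paper, which simply states that the corollary is implied by Lemma~\ref{lemma_children} without spelling out details. You are more explicit than the paper in handling the boundary vertices $\pi(1)$ and $\pi(n)$, which Lemma~\ref{lemma_children} does not cover; this boundary analysis is exactly the missing piece the paper leaves to the reader, and your treatment of it via inorder predecessors is sound.
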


\section{The toric $g$-polynomial in terms of the $\gamma$-vector}
\label{section_the_toric_g-polynomial}

Recall an $n$-dimensional polytope $P$ is {\em simple}
(or {\em dual simplicial})
if every vertex is incident to $n$ facets (maximal faces).
Let $f_{i} = f_{i}(P)$ be the number of $i$-dimensional faces
of the polytope $P$ for $0 \leq i \leq n$.
The associated $h$-polynomial is given by
\begin{align}
\label{equation_f_to_h_vector}
\sum_{i=0}^{n} h_{i} \cdot x^{i}
& =
\sum_{i=0}^{n} f_{i} \cdot (x-1)^{i} .
\end{align}
The Dehn-Sommerville relations state that this polynomial is palindromic,
that is, $h_{i} = h_{n-i}$ for $0 \leq i \leq n$.
The {\em $\gamma$-vector}
$\gamma = (\gamma_{0}, \gamma_{1}, \ldots, \gamma_{\lfloor\half{n}\rfloor})$
of an $n$-dimensional simple polytope 
was introduced by Gal~\cite[Definition 2.1.4]{Gal} and is given by
\begin{align}
\label{equation_gamma_vector}
\sum_{i=0}^{n} h_{i} \cdot x^{i}
& =
\sum_{j=0}^{\left\lfloor\half{n}\right\rfloor} \gamma_{j}  \cdot x^{j} \cdot (1+x)^{n-2j}.   
\end{align}
The fact that the $\gamma$-vector is well-defined follows
from the Dehn-Sommerville relations.

Consider the coefficient of $x^{i}$ in equation~\eqref{equation_gamma_vector}.
For $0\leq i \leq \half{n}$, we have
\begin{align*}
h_{i}
& =
\sum_{j=0}^{i} \binom{n-2j}{i-j} \cdot \gamma_{j} .
\end{align*}
Taking the backward difference, we obtain
\begin{align}
h_{i}-h_{i-1}
& =
\gamma_{i}+\sum_{j=0}^{i-1}
\left(\binom{n-2j}{i-j}-\binom{n-2j}{i-j-1}\right) \cdot \gamma_{j}
\nonumber \\
& =
\sum_{j=0}^{i} C(n-2j,i-j) \cdot \gamma_{j} ,
\label{equation_hdiff2}  
\end{align}
for $1\leq i\leq \half{n}$, where we used
the entries of the Catalan triangle~\eqref{equation_Catalan_triangle}.
For a lattice path interpretation of the $h$-vector difference
$h_i - h_{i-1}$, see
work of Nevo--Peterson--Tenner~\cite[Observation~6.1]{Nevo_Petersen_Tenner}.
We note the paper~\cite{Nevo_Petersen_Tenner} is closely related to
the Nevo--Petersen conjecture~\cite[Conjecture 1.4]{Nevo_Petersen}
and Conjecture~\ref{conjecture_simplicial} in the introduction.
See also~\cite{Brittenham_Carroll_Petersen_Thomas}
for examples inspired by $q$-analogues.

Introducing $C(n,0,x)=1$ and
\begin{align}
\label{equation_Catalan_triangle_x}  
C(n,i,x)
& =
\sum_{k=1}^{i}
\frac{n+1-2i}{k} \cdot \binom{n-i}{k-1} \cdot \binom{i-1}{k-1} \cdot x^{k}
\end{align}
for $1\leq i \leq \half{n}$,
we may write the toric
$g$-polynomial of an $n$-dimensional simple polytope $P$ in terms of its
$h$-vector~\cite[Corollary~6.9]{Hetyei_toric} 
as follows: 
\begin{lemma}[Hetyei]
\label{lemma_gh}
The toric $g$-polynomial of an $n$-dimensional simple polytope $P$ is
given by 
\begin{align}
\label{equation_gh}
g(P,x)
& =
h_{0} + \sum_{i=1}^{\left\lfloor \half{n}\right\rfloor} (h_{i}-h_{i-1}) \cdot C(n,i,x).
\end{align}
\end{lemma}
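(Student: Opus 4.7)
The plan is to reduce the claim to Stanley's recursive definition of the toric $g$-polynomial, following the strategy that leads to \cite[Corollary~6.9]{Hetyei_toric}. First I would observe that, as the toric $g$-polynomial is a linear functional of the flag $f$-vector of $P$, and the flag $f$-vector of a simple polytope is determined by its $h$-vector (via the Dehn--Sommerville relations specific to simple polytopes), one may write
\[
g(P,x) \;=\; \sum_{i=0}^{n} a_{i}(n,x) \cdot h_{i}
\]
for universal polynomials $a_{i}(n,x)$ depending only on $n$.

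Using the palindromic symmetry $h_{i} = h_{n-i}$ and the normalization $h_{0}=1$, this expression can be rewritten in the form
\[
g(P,x) \;=\; h_{0} \cdot B_{0}(n,x) + \sum_{i=1}^{\left\lfloor\half{n}\right\rfloor} (h_{i}-h_{i-1}) \cdot B_{i}(n,x)
\]
for some polynomials $B_{i}(n,x)$. Applying this to the $n$-simplex (whose $h$-vector is $(1,1,\ldots,1)$, so that all backward differences vanish and $g=1$) immediately forces $B_{0}(n,x) = 1$.

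The main obstacle is identifying the remaining coefficients $B_{i}(n,x)$ with the hypergeometric polynomials $C(n,i,x)$ given in~\eqref{equation_Catalan_triangle_x}. To accomplish this, I would evaluate the formula on a spanning family of simple polytopes whose $h$-vectors realize indicator-like difference vectors $(h_{j}-h_{j-1})_{j}$, so that each $B_{i}(n,x)$ can be isolated in turn. The resulting identification reduces to a family of ballot-type binomial identities, which both explain the appearance of the prefactor $\frac{n+1-2i}{k}$ and yield the nonnegativity of all coefficients of $C(n,i,x)$. As carried out in~\cite{Hetyei_toric}, this step admits a direct lattice path interpretation that makes the combinatorial content of the identity transparent, and matching the expansion against~\eqref{equation_Catalan_triangle_x} completes the proof.
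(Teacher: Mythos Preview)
The paper does not give a proof of this lemma at all: it is quoted as \cite[Corollary~6.9]{Hetyei_toric} and used as a black box. So there is no in-paper argument to compare yours against.

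Your outline is sound through the reduction step. The toric $g$-polynomial is indeed linear in the flag $f$-vector; for simple polytopes the flag $f$-vector is determined by the ordinary $h$-vector; the change of basis from $(h_0,h_1,\ldots,h_{\lfloor n/2\rfloor})$ to $(h_0,h_1-h_0,\ldots)$ is invertible; and the evaluation on the simplex correctly pins down $B_0(n,x)=1$. All of that is fine.

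The gap is in your last paragraph. To identify $B_i(n,x)$ with $C(n,i,x)$ you propose to evaluate on ``a spanning family of simple polytopes whose $h$-vectors realize indicator-like difference vectors'', but you neither name such a family nor say how you would compute their toric $g$-polynomials independently of the formula you are trying to prove. That independent computation is exactly where the content of the lemma lives. You then close the circle by appealing to \cite{Hetyei_toric} for this step --- but \cite[Corollary~6.9]{Hetyei_toric} \emph{is} the lemma, so this is not a proof but a restatement of the citation. If you want a genuine argument along these lines you must either (a) exhibit explicit simple $n$-polytopes (e.g.\ products of simplices, or duals of cyclic polytopes) whose $h$-vectors span, compute their toric $g$-polynomials directly from Stanley's recursion, and solve the resulting linear system; or (b) follow what \cite{Hetyei_toric} actually does, which is not an evaluation-on-test-polytopes argument but a derivation via the short toric polynomial and a weighted lattice-path expansion of the relevant $cd$-index coefficients. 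Either route requires substantially more than what is sketched here.
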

Here we used the fact that the $h$-vector of a simple polytope is the
same as the $h$-vector of the boundary complex of its simplicial dual. 
Combining Lemma~\ref{lemma_gh} with equation~\eqref{equation_hdiff2}
(and taking into account 
$\gamma_{0}=h_{0}$), 
we obtain 
\begin{align}  
g(P,x)
& = 
\gamma_{0}+\sum_{i=1}^{\left\lfloor \half{n}\right\rfloor}
\sum_{j=0}^{i}  C(n-2j,i-j) \cdot C(n,i,x) \cdot \gamma_{j}
\nonumber \\
& =
\sum_{i=0}^{\left\lfloor \half{n}\right\rfloor}\sum_{j=0}^{i}
C(n-2j,i-j) \cdot C(n,i,x) \cdot \gamma_{j}
\nonumber \\
&=\sum_{j=0}^{\left\lfloor \half{n}\right\rfloor} \gamma_{j} \cdot
\sum_{i=j}^{\left\lfloor \half{n}\right\rfloor} C(n-2j,i-j) \cdot C(n,i,x) .
\label{equation_g_gamma}  
\end{align}
Krattenthaler's remark~\cite[Remark 6.12]{Hetyei_toric} may be
rephrased as follows.
\begin{proposition}[Krattenthaler]
\label{proposition_Krattenthaler}
For $1\leq i\leq \half{n}$, the coefficient of $x^{k}$ in the polynomial
$C(n,i,x)$ is the number of Dyck paths from $(0,0)$ to $(n,n-2i)$
having $k$ peaks. 
\end{proposition}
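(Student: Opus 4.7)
The plan is to apply the Dvoretzky--Motzkin cycle lemma via a double counting argument. First, I would establish a peak-preserving bijection between Dyck paths from $(0,0)$ to $(n,n-2i)$ with $k$ peaks and lattice paths of length $n+1$ ending at $(n+1,n+1-2i)$ with strictly positive partial sums and $k$ peaks, given by prepending a $U$-step. Indeed, this shifts all partial sums up by one, turning non-negative sums into strictly positive ones; and since $i \geq 1$ forces the Dyck path to start with $U$, prepending a $U$ merely extends the initial $U$-run and hence preserves the peak count.

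For length-$(n+1)$ $UD$-words with $n-i+1$ letters $U$, $i$ letters $D$, and exactly $k$ peaks, I introduce two auxiliary sets: $L$, the subset of such words that start with $U$ and end with $D$, and $P$, the subset of such words whose partial sums are strictly positive (equivalently, that correspond to Dyck paths under the above bijection). A direct count by compositions yields $|L|=\binom{n-i}{k-1}\binom{i-1}{k-1}$, since each word in $L$ factors uniquely as $U^{u_1}D^{d_1}\cdots U^{u_k}D^{d_k}$ with $u_j,d_j\geq 1$, $\sum u_j=n-i+1$, and $\sum d_j=i$. The goal is to establish $|P|=\tfrac{n+1-2i}{k}|L|$, which yields the claim.

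To this end, I would count the set $N$ of pairs $(p,s)$ with $p\in\{0,1,\ldots,n\}$ and $s\in L$ such that the cyclic rotation of $s$ by $p$ positions lies in $P$, in two ways. Summing by $s\in L$, the cycle lemma for strictly positive partial sums contributes $n+1-2i$ valid rotation amounts per $s$; each such rotation must start with $U$, so its cut does not break any cyclic $UD$-edge and hence all $k$ cyclic peaks of the underlying cyclic word survive, placing the rotation in $P$. Thus $|N|=(n+1-2i)|L|$. Summing by $s'\in P$ instead, the pairs contributing $s'$ correspond to rotations of $s'$ that start with $U$ and end with $D$; these are precisely the cuts at the cyclic $DU$-edges (valleys), and since $s'$ has $k$ cyclic peaks it also has $k$ cyclic valleys. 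Thus $|N|=k|P|$, and equating the two gives the formula.

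The main delicate point is potential rotational symmetry of the underlying cyclic words: when a word has period strictly less than $n+1$, distinct rotation amounts $p$ may produce the same linear word. The double counting above is robust to this, because both expressions for $|N|$ sum over all pairs $(p,s)$ and $(p,s')$ with multiplicity, so no division by stabilizers is required. Combining the resulting identity $k|P|=(n+1-2i)|L|$ with the initial bijection establishes Krattenthaler's formula.
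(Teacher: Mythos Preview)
Your argument is correct. The prepend-$U$ bijection, the composition count for $|L|$, the two-way count of $N$, and the handling of peaks under rotation are all sound. In particular, your key observations---that any rotation with strictly positive partial sums must start with $U$ and therefore cannot sever a cyclic $UD$-edge, and that in any cyclic $UD$-word the number of $UD$-edges equals the number of $DU$-edges---justify the peak-preservation claims on both sides of the double count. Your remark that the cycle lemma and the valley count are statements about cut positions (values of $p$), not about distinct linear words, correctly dispatches the periodicity issue.

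The paper's proof is quite different in character. Rather than establishing the Narayana-type count directly, it invokes Krattenthaler's remark in~\cite[Remark~6.12]{Hetyei_toric}, which already identifies the coefficient of $x^{k}$ in $C(n,i,x)$ with the number of Dyck paths from $(0,0)$ to $(n+1,n+1-2i)$ having $k$ valleys and no down step after the last valley. The paper then observes that such a path necessarily has $k$ peaks and ends in an up step, so deleting that final up step gives a bijection to Dyck paths from $(0,0)$ to $(n,n-2i)$ with $k$ peaks. Thus the paper's proof is essentially a short bijection layered on top of a citation, while yours is a self-contained derivation via the Dvoretzky--Motzkin cycle lemma. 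Your route buys independence from the external reference and makes the appearance of the factor $\frac{n+1-2i}{k}$ transparent; the paper's route is shorter given the cited remark and ties the statement back to the formulation used in the earlier literature on the short toric polynomial.
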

\begin{proof}
Reflecting the Dyck path over the line $y=x$ and then
rotating it clockwise by $\pi/4$ radians
in~\cite[Remark 6.12]{Hetyei_toric},
Krattenthaler's remark can be restated
as the sought after coefficient
is the number of all Dyck paths from $(0,0)$ to $(n+1,n+1-2i)$
containing $k$ valleys such that there are no additional down steps after the
last valley. Such a lattice path necessarily has also $k$ peaks, and the
last step must be an up step. Removing the final up step yields a Dyck
path from $(0,0)$ to $(n,n-2i)$ having $k$ peaks (and $k$ or $k-1$
valleys). The inverse of this operation is adding a final up step to Dyck
paths from $(0,0)$ to $(n,n-2i)$ having $k$ peaks. In the resulting
lattice path there are $k$ valleys, and there are no peaks after the
last valley. 
\end{proof}  

\begin{lemma}
\label{lemma_peaks}  
The coefficient of $x^{k}$ in
$
\sum_{i=j}^{\left\lfloor \half{n}\right\rfloor}
  C(n-2j,i-j)
\cdot C(n,i,x)$
is the number of Dyck paths of semilength $n-j$ with $k$ peaks whose
first coordinate is at most $n-1$.
\end{lemma}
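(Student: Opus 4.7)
My plan is to interpret the two factors of each summand combinatorially and then combine the resulting objects into a single Dyck path via a concatenation bijection. By Proposition~\ref{proposition_Krattenthaler}, the coefficient of $x^{k}$ in $C(n,i,x)$ counts Dyck paths $D$ from $(0,0)$ to $(n,n-2i)$ with exactly $k$ peaks, using up steps $(1,1)$ and down steps $(1,-1)$; and by~\eqref{equation_Catalan_triangle} the factor $C(n-2j,i-j)$ counts (unrestricted) non-negative lattice paths $P$ from $(0,0)$ to $(n-2j,n-2i)$ with the same step set. Hence the coefficient of $x^{k}$ in the summand $C(n-2j,i-j)\cdot C(n,i,x)$ is the number of such pairs $(D,P)$ in which $D$ has $k$ peaks.

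The heart of the argument is to encode each pair $(D,P)$ as a single Dyck path of semilength $n-j$. Given $P$, I would let $\bar P$ denote the word obtained by reading $P$ backwards and swapping $U$ with $D$, and then set $Q=D\cdot\bar P$. A direct computation will show that the height of $Q$ at horizontal position $n+t$ equals the height of $P$ at position $n-2j-t$, so the non-negativity of the second half of $Q$ is equivalent to that of $P$. Since the first half $D$ is already non-negative, $Q$ is a Dyck path from $(0,0)$ to $(2(n-j),0)$. This map is reversible: splitting any Dyck path $Q$ of semilength $n-j$ after its $n$-th step recovers $D$ and $\bar P$, and hence $P$. The height of $Q$ after step $n$, written as $n-2i$, determines the index $i$ and ranges over precisely $i=j,j+1,\ldots,\lfloor n/2\rfloor$, since the remaining $n-2j$ steps must be able to descend to the axis.

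Once the bijection is in place, peak counting is immediate: a peak of $Q$ at position $t\le n-1$ is formed from steps $t$ and $t+1$, both of which lie within $D$, so such a peak is a peak of $D$; conversely every peak of $D$ occurs at some position $\le n-1$ and appears as a peak of $Q$ there. Therefore the $k$ peaks of $D$ correspond bijectively to the peaks of $Q$ whose first coordinate is at most $n-1$. Summing over $i$ groups the Dyck paths $Q$ by the height after their $n$-th step, and the resulting total equals the claimed count. The main thing to verify carefully is the height identity making $Q$ non-negative on its second half; from this both the bijection and the peak correspondence follow with no further effort.
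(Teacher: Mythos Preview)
Your argument is correct and is essentially the same as the paper's proof: both cut the Dyck path of semilength $n-j$ at horizontal position $n$, identify the prefix with the paths counted by the coefficient of $x^{k}$ in $C(n,i,x)$ and the suffix with the paths counted by $C(n-2j,i-j)$, and observe that the peaks with first coordinate at most $n-1$ are exactly the peaks of the prefix. The only cosmetic difference is that the paper argues in the splitting direction and simply asserts the suffix count, whereas you build $Q$ by concatenation and make the reverse-and-swap bijection (and the height identity) explicit.
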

\begin{proof}
Let
$(0,0)=(x_{0},y_{0}), (x_{1},y_{1}),\ldots, (x_{2n-2j},y_{2n-2j})=(2n-2j,0)$
be any Dyck path of semilength $n-j$.
Let  $i=\half{(n-y_{n})}$. Note that we must be able to return to level $0$
from level $y_{n}$ in $n-2j$ steps. Hence $0\leq y_{n}\leq n-2j$ and the
parity of $y_{n}$ is the same as the parity of $n-2j$.  In turn, this is the same
as the parity of $n$. We obtain that $i$ is an integer satisfying $j\leq
i\leq \half{n}$ and that $y_{n}=n-2i$. After fixing~$i$,
the number of possible lattice paths $(0,0)=(x_{0},y_{0}), (x_{1},y_{1}),\ldots,
(x_{n},y_{n})=(n,n-2i)$ from $(0,0)$ to $(n,2n-i)$ with $k$ peaks
is the coefficient of $x^{k}$ in $C(n,i,x)$. Furthermore, there are
$C(n-2j,i-j)$ ways to select the lattice path $(n,n-2i)=(x_{n},y_{n}),
(x_{n+1},y_{n+1}),\ldots, (x_{2n-2j},y_{2n-2j})=(2n-2j,0)$.  
\end{proof}  

Define the {\em toric $g$-contribution polynomial} $g_{n,j}(x)$ as follows:
\begin{align}
\label{equation_gp}
g_{n,j}(x)
& =
\sum_{k=0}^{\min\left(\left\lfloor \half{n}\right\rfloor, n-j\right)}
C_{n-k-j} \cdot \binom{n-k}{k} \cdot (x-1)^{k} ,
\end{align}  
for $0 \leq j \leq n$.
We remark that
the polynomial $g_{n,0}(x)$
is the toric $g$-polynomial of the $n$-dimensional
cube~\cite{Hetyei_second_look}.
The next result shows that for $0 \leq j \leq \half{n}$
the polynomial $g_{n,j}(x)$ is the contribution of
the $\gamma$-vector entry $\gamma_{j}$
to the toric $g$-polynomial of an $n$-dimensional simple polytope.
\begin{theorem}
\label{theorem_peaks}  
The toric $g$-polynomial of an $n$-dimensional simple polytope $P$
is given by   
\begin{align}
g(P,x)
& =
\sum_{j=0}^{\left\lfloor \half{n}\right\rfloor} \gamma_{j} \cdot g_{n,j}(x) ,
\end{align}
where
$(\gamma_{0},\gamma_{1},\ldots,\gamma_{\left\lfloor \half{n}\right\rfloor})$
is the $\gamma$-vector of the simple polytope $P$.   
\end{theorem}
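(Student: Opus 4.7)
My plan is to reduce the theorem, via equation~\eqref{equation_g_gamma}, to the polynomial identity
\begin{equation*}
g_{n,j}(x) = \sum_{i=j}^{\lfloor n/2 \rfloor} C(n-2j, i-j) \cdot C(n, i, x), \quad 0 \leq j \leq \lfloor n/2 \rfloor,
\end{equation*}
and then to establish this identity by matching the coefficients of $(x-1)^k$ on both sides. On the left side the coefficient of $(x-1)^k$ is $C_{n-k-j} \binom{n-k}{k}$ directly from the definition of $g_{n,j}(x)$ in~\eqref{equation_gp}. For the right side, Lemma~\ref{lemma_peaks} identifies the coefficient of $x^m$ as the number $d_m$ of Dyck paths of semilength $n-j$ having $m$ peaks whose first coordinate is at most $n-1$. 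Expanding $x^m = \sum_k \binom{m}{k}(x-1)^k$ then shows that the coefficient of $(x-1)^k$ on the right equals $\sum_m d_m \binom{m}{k}$, which enumerates pairs $(P, S)$ consisting of a Dyck path $P$ of semilength $n-j$ together with a $k$-subset $S$ of peaks of $P$ each having first coordinate at most $n-1$.

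The heart of the argument will be a bijection showing that these pairs $(P,S)$ are counted by $\binom{n-k}{k} \cdot C_{n-k-j}$. I would map $(P,S)$ to the pair $(T,P')$, where $T \subseteq [1,n-1]$ consists of the indices of the $U$-steps belonging to the peaks in $S$, and $P'$ is the Dyck path obtained by deleting the $2k$ letters of these peaks from $P$. Since the peaks in $S$ are pairwise disjoint, $T$ is a $k$-subset of $[1,n-1]$ containing no two consecutive integers, and such subsets are classically counted by $\binom{n-k}{k}$. The inverse map places a $U$ at each position $t \in T$ and a $D$ at each position $t+1$, and fills the remaining $2(n-j-k)$ positions with the letters of $P'$ in order.

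The main obstacle is verifying this bijection cleanly. In the forward direction, deleting the pairwise disjoint $UD$ factors of $S$ preserves height nonnegativity, so $P'$ is indeed a Dyck path of semilength $n-k-j$. In the inverse direction, each inserted $UD$ adds a locally nonnegative bump to the height profile of $P'$ without altering heights at the remaining positions, so the reconstruction is a Dyck path of semilength $n-j$; moreover each inserted $UD$ factor is automatically a peak whose first coordinate $t$ lies in $[1,n-1]$. Once these checks are in place, one obtains $\sum_m d_m \binom{m}{k} = \binom{n-k}{k} \cdot C_{n-k-j}$, matching the coefficient of $(x-1)^k$ on the left side and establishing the theorem.
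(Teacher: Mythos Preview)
Your proposal is correct and follows essentially the same route as the paper. Both arguments reduce to the identity $g_{n,j}(x)=\sum_{i=j}^{\lfloor n/2\rfloor}C(n-2j,i-j)\,C(n,i,x)$, shift by $x\mapsto x+1$ (equivalently, expand in powers of $x-1$) so that the coefficient of interest counts Dyck paths of semilength $n-j$ with $k$ marked peaks of first coordinate at most $n-1$, and then exhibit a bijection proving this count equals $C_{n-k-j}\binom{n-k}{k}$. The only cosmetic difference is bookkeeping: the paper records the insertion data as an interleaving of the $k$ marked $UD$ factors with the first $n-2k$ letters of the reduced word $v$, whereas you record it as the sparse set $T\subseteq[1,n-1]$ of absolute positions of the marked $U$-steps in $P$; these parameterizations are equivalent via the standard correspondence between $k$-subsets of $[n-k]$ and sparse $k$-subsets of $[n-1]$.
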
  
\begin{proof}
Replacing $x$ by $x+1$ in
equations~\eqref{equation_g_gamma} and~\eqref{equation_gp}
and fixing $j$,
we need to show the following equivalent identity:
\begin{align*}
\sum_{i=j}^{\left\lfloor \half{n}\right\rfloor} C(n-2j,i-j) \cdot C(n,i,x+1)
& =
\sum_{k=0}^{\left\lfloor \half{n}\right\rfloor} C_{n-j-k} \cdot \binom{n-k}{k} \cdot x^{k}.
\end{align*}
By Lemma~\ref{lemma_peaks} the contribution of $\gamma_{j}$ to $g(P,x)$ is
the total weight of Dyck paths of semilength $n-j$ where each peak whose
first coordinate is in the interval $[1,n-1]$ contributes a factor of
$x$ to the weight of the lattice path. When we substitute $x+1$ into $x$,
this corresponds to marking some of these peaks and we count only the
marked peaks in the exponent of $x$.

Subject to this rephrasing, it suffices to show the following statement:
The number of Dyck paths from $(0,0)$ to $(2n-2j,0)$ having $k$
marked peaks with first coordinate at most $n-1$ is
$C_{n-k-j} \cdot \binom{n-k}{k}$.
Let $w = w_{1}w_{2}\cdots w_{2n-2j}$ be the associated Dyck word.

Hence we
are counting Dyck words of semilength $n-j$ with some marked factors~$UD$
among the first $n$ letters. Let us remove these factors. If we
remove $k$ such marked factors from a Dyck word of semilength
$n-j$, we obtain a Dyck word $v = v_{1} v_{2} \cdots v_{2n-2j-2k}$
of semilength $n-j-k$. Note that the marked peaks belong to the interval
$[1,n-1]$ if and only if all marked factors occurred before the
letter $v_{n-2k+1}$. There are $C_{n-j-k}$ ways to select the Dyck
word $v$. To reconstruct $w$ we must insert $k$
factors before $v_{n-2k+1}$. In other words, we must line up the letters $v_{1},
v_{2},\ldots, v_{n-2k}$ (in this relative order) and $k$ factors
$UD$. This may be performed in $\binom{n-2k+k}{k}=\binom{n-k}{k}$ ways.
\end{proof}  

\begin{lemma}
Let $P$ be an $n$-dimensional simple polytope
with $\gamma$-vector
$(\gamma_{0}, \gamma_{1}, \ldots, \gamma_{\lfloor \half{n} \rfloor})$.
Then the toric $g$-polynomial of the polytope $P$ is given by
\begin{align*}
g(P,x)
& =
\sum_{k=0}^{\left\lfloor \half{n}\right\rfloor} \binom{n-k}{k} \cdot (x-1)^{k} \cdot
\sum_{j=0}^{\left\lfloor\half{n}\right\rfloor} \gamma_{j} \cdot C_{n-k-j} .
\end{align*}
\label{lemma_explicit_g_polynomial}
\end{lemma}
\begin{proof}
We have
\begin{align*}
g(P,x)
& =
\sum_{j=0}^{\left\lfloor\half{n}\right\rfloor}
\gamma_{j} \cdot g_{n,j}(x) \\
& =
\sum_{j=0}^{\left\lfloor\half{n}\right\rfloor} \gamma_{j} \cdot 
\sum_{k=0}^{\left\lfloor \half{n}\right\rfloor} C_{n-k-j} \cdot \binom{n-k}{k} \cdot (x-1)^{k}\\
& =
\sum_{k=0}^{\left\lfloor \half{n}\right\rfloor} \binom{n-k}{k} \cdot (x-1)^{k} \cdot
\sum_{j=0}^{\left\lfloor\half{n}\right\rfloor} \gamma_{j} \cdot C_{n-k-j} .
\qedhere
\end{align*}
\end{proof}

Next we turn our attention to the leading coefficient
of the toric $g$-polynomial.

\begin{corollary}
\label{corollary_2m_m}
Let $P$ be a $2m$-dimensional simple polytope
with $\gamma$-vector
$(\gamma_{0},\gamma_{1},\ldots,\gamma_{m})$.
The leading coefficient of the toric $g$-polynomial of
the polytope $P$ is given by   
\begin{align*}
[x^m] g(P,x)
& =
\sum_{j=0}^{m} \gamma_{j} \cdot C_{m-j} .
\end{align*}
Similarly, let $P$ be a $(2m+1)$-dimensional simple polytope
with $\gamma$-vector
$(\gamma_{0},\gamma_{1},\ldots,\gamma_{m})$.
The leading coefficient of the toric $g$-polynomial of
the polytope $P$ is given by   
\begin{align*}
[x^m] g(P,x)
& =
(m+1) \cdot \sum_{j=0}^{m} \gamma_{j} \cdot C_{m+1-j} .
\end{align*}
\end{corollary}
\begin{proof}
The only contribution to the toric $g$-polynomial
of $P$ of degree at least $m$ is the term $k=m$
in Lemma~\ref{lemma_explicit_g_polynomial},
and since $[x^{m}] (x-1)^{m} = 1$, the two results follow.
\end{proof}

\section{Compatible Dyck words}
\label{section_compatible}

In this section we give several interpretations of the coefficients of 
the toric $g$-contribution polynomial~$g_{n,j}(x)$.
The two first interpretations will be used in
Sections~\ref{section_associahedron}
and~\ref{section_cyclohedron}.

Recall that a set of integers is {\em
sparse} if it does not contain a pair of consecutive integers. 
Furthermore, a {\em labeled Dyck word $w$} of semilength $n$
is a Dyck word 
where the letters~$U$, respectively the letters~$D$,
are indexed from left to right by $1$ through~$n$.
For instance,
the Dyck word $U_{1}U_{2}D_{1}U_{3}D_{2}D_{3}U_{4}D_{4}$
is labeled.
We use the convention that a letter has no index if  
the index of a letter is irrelevant at the moment.

\begin{definition}
Let $A$ and $B$ be two sparse subsets of the set
$[n-1]$ and let $w$ be a labeled Dyck word of semilength $n$.
We say that the word $w$ is $(A,B)$-compatible if 
\begin{itemize}
\item[(a)]
for each $a \in A$ the word $U_{a}U_{a+1}D$ is a factor in $w$, and
\item[(b)]
for each $b \in B$ the word $UD_{b}D_{b+1}$ is a factor in $w$.
\end{itemize}
\end{definition}
Let ${\mathcal D}_{n}$ denote the set
of all Dyck words of semilength $n$.
Furthermore, let ${\mathcal D}_{n,A,B}$ be the set of all Dyck words of semilength $n$ that are $(A,B)$-compatible.
Observe that if a Dyck word
is both
$(A_{1},B_{1})$-compatible
and
$(A_{2},B_{2})$-compatible
then it is also
$(A_{1} \cup A_{2},B_{1} \cup B_{2})$-compatible.
\begin{theorem}
\label{theorem_compatible}
The number of Dyck words of semilength $n$ that are $(A,B)$-compatible
is given by the Catalan number $C_{n-|A|-|B|}$.
\end{theorem}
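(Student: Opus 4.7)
The plan is to prove Theorem~\ref{theorem_compatible} by constructing an explicit bijection $\Phi : \mathcal{D}_{n,A,B} \to \mathcal{D}_{n-|A|-|B|}$ that excises the peaks forced by the compatibility constraints. For each $a \in A$ the factor $U_a U_{a+1} D$ in $w$ forces $U_{a+1}D$ to be a peak, and for each $b \in B$ the factor $U D_b D_{b+1}$ forces $U D_b$ to be a peak. The map $\Phi$ deletes all such forced peaks. In the generic case they are pairwise disjoint $UD$ factors, and removing a disjoint collection of $UD$ peaks from a Dyck word preserves the Dyck property, so the resulting word has semilength $n-|A|-|B|$.

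The main technical issue is the overlap case, in which the forced peak for some $a \in A$ coincides with the forced peak for some $b \in B$; this happens exactly when the four-letter factor $U_a U_{a+1} D_b D_{b+1}$ appears in $w$. Deleting only the inner peak $U_{a+1}D_b$ would account for two constraints while reducing the semilength by only one, so we must instead delete all four letters of the overlap pattern: once the inner peak is gone, the exposed $U_a D_{b+1}$ is itself a peak and is also removed. The sparsity hypothesis on $A$ and $B$ will be used to show that distinct overlap patterns and distinct non-overlap forced peaks occupy pairwise disjoint positions in $w$ (since their $U$-labels lie in the sparse-complement sets $\{a,a+1\}$ for $a\in A$ and their $D$-labels lie in $\{b,b+1\}$ for $b\in B$), so all excisions may be carried out simultaneously and yield a valid Dyck word of semilength $n-|A|-|B|$.

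To show $\Phi$ is a bijection I would construct its inverse, which reinserts the forced peaks at the positions dictated by $A$ and $B$: for each $a \in A$ insert a $UD$ peak right after the would-be $U_a$, and for each $b \in B$ insert a $UD$ peak right before the would-be $D_{b+1}$, treating a collision of an $A$- and a $B$-insertion as the insertion of a single $UUDD$ block. The hardest part is the careful bookkeeping of how the $U$- and $D$-indices shift under multiple simultaneous insertions and verifying that the overlap/collision dichotomy is respected in both directions. A cleaner variant would be an induction on $|A|+|B|$ that peels off the constraint at $a=\max A$ (or $b=\max B$ by symmetry) when it is not part of an overlap, and peels off an $A$-constraint together with the colliding $B$-constraint when it is; after checking that the resulting relabeled index sets remain sparse subsets of the smaller interval, the inductive hypothesis immediately gives the reduction from $C_{n-|A|-|B|}$ either to $C_{(n-1)-(|A|+|B|-1)}$ or to $C_{(n-2)-(|A|+|B|-2)}$, both of which equal $C_{n-|A|-|B|}$.
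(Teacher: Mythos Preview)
Your proposal is correct and follows essentially the same approach as the paper: your forward map $\Phi$ (delete the forced peaks, and all four letters in the overlap case) coincides with the paper's $\phi_{n,A,B}$, phrased there as the replacements $U_aU_{a+1}D_bD_{b+1}\to 1$, $U_aU_{a+1}D\to U$, and $UD_bD_{b+1}\to D$. For the inverse the paper likewise argues by induction on $|A|+|B|$, peeling from $\min A$ and $\min B$ rather than the maxima, with the case split governing which insertion to perform driven by the relative order of $U_{a-1},U_a,D_{b-1},D_b$ in the target word---precisely the bookkeeping you anticipate as the hardest part.
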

\begin{proof}
We present an explicit bijection 
$\phi_{n,A,B}: {\mathcal D}_{n,A,B} \longrightarrow {\mathcal D}_{n-|A|-|B|}$.
Let $w$ be a labeled Dyck word in the set ${\mathcal D}_{n,A,B}$.
First replace each occurrence of factors of the form
$U_{a}U_{a+1}D_{b}D_{b+1}$
where $a \in A$ and $b \in B$ by the empty word~$1$.
Note that this is the only way the two factors of the form
$U_{a}U_{a+1}D$ and $UD_{b}D_{b+1}$ can overlap.
Second, replace each remaining factor of the form
$U_{a}U_{a+1}D$ where $a \in A$ by $U$.
Finally, replace each remaining factor of the form
$UD_{b}D_{b+1}$ where $b \in B$ by $D$.
The resulting word is a Dyck word of semilength $n-|A|-|B|$.

The idea for establishing the inverse map of $\phi_{n,A,B}$ is to consider the
left-most replacement.
Let $a$, respectively $b$, be the minimal elements of the two sparse sets $A$ and $B$.
If the left-most replacement was $U_{a}U_{a+1}D \longmapsto U$
then the letter $U_{a}$ will precede the letter $D_{b-1}$
in the relabeled word.
Similarly, if the left-most replacement was $UD_{b}D_{b+1} \longmapsto D$
then $D_{b}$ will precede $U_{a-1}$.
Finally, if the replacement was
$U_{a}U_{a+1}D_{b}D_{b+1} \longmapsto 1$
then the two letters
$U_{a-1}$ and $D_{b-1}$ precede
the two letters
$U_{a}$ and $D_{b}$ in the image of $w$.
These are three distinct cases.

By induction on $|A|+|B|$ we construct the inverse map
$\psi_{n,A,B}: {\mathcal D}_{n-|A|-|B|} \longrightarrow {\mathcal D}_{n,A,B}$.
When $|A|+|B|=0$, the two sets
${\mathcal D}_{n-|A|-|B|}$ and ${\mathcal D}_{n,A,B}$
are equal and there is nothing to prove.
Let $w$ be a labeled Dyck word in ${\mathcal D}_{n-|A|-|B|}$.
Note in what follows the Dyck word
$\psi_{n,A,B}(w)$ is constructed from the word $w$ by reading
$w$ left to right.

Next we study the case when the set $A$ is empty
and $B$ is nonempty. 
Let $b$ be the minimal element of the set $B$.
Replace the occurrence of $D_{b}$ in $w$
with $UD_{b}D_{b+1}$. After relabeling the indices we obtain
a Dyck word $v$ in ${\mathcal D}_{n-|B|+1}$
Now set
$\psi_{n,\emptyset,B}(w) = \psi_{n,\emptyset,B-\{b\}}(v)$.
Observe that by induction this word
is $(\emptyset,B-\{b\})$-compatible.
Furthermore, by the construction it is also 
is $(\emptyset,\{b\})$-compatible.
In conclusion, it is $(\emptyset,B)$-compatible,
completing this case.

The case when $B$ is empty and $A$ is non-empty is
similar.

Now consider the case when $A$ and $B$ are both nonempty.
Let $a = \min(A)$ and $b = \min(B)$.
Consider the relative order of the four letters
$U_{a-1}$, $U_{a}$, $D_{b-1}$ and $D_{b}$
in the labeled word $w$. 
If $a=1$ or $b=1$ view the two non-existent letters
$U_{0}$ and $D_{0}$ as standing in front of the word $w$.
There are six possible arrangements that we arrange
in three cases, one of which has four subcases:
\begin{itemize}
\item[--]
The two letters $U_{a-1}$ and $D_{b-1}$ appear in $w$
before the two letters $U_{a}$ and $D_{b}$, reading left to right.
That is, we have one of the following four subcases:
\begin{align*}
U_{a-1} \cdots D_{b-1} \cdots U_{a} \cdots D_{b}, \\
U_{a-1} \cdots D_{b-1} \cdots D_{b} \cdots U_{a}, \\
D_{b-1} \cdots U_{a-1} \cdots U_{a} \cdots D_{b}, \\
D_{b-1} \cdots U_{a-1} \cdots D_{b} \cdots U_{a}.
\end{align*}
Furthermore, note that the second and third letters in each of these
subcases have to be adjacent, since no $U$ can be between
$U_{a-1}$ and $U_{a}$, and similarly, no letter $D$ between $D_{b-1}$
and $D_{b}$.
Hence we insert the factor $U_{a}U_{a+1}D_{b}D_{b+1}$ between the second and third letter
and relabel to obtain the Dyck word $v$.
Now define the map $\psi$ by
$\psi_{n,A,B}(w) = \psi_{n,A-\{a\},B-\{b\}}(v)$.

\item[--]
The two letters $U_{a-1}$ and $U_{a}$ appear in $w$
before the two letters $D_{b-1}$ and $D_{b}$ appear.
\begin{align*}
U_{a-1} \cdots U_{a} \cdots D_{b-1} \cdots D_{b}.
\end{align*}
Replace $U_{a}$ with $U_{a}U_{a+1}D$ and relabel to obtain the Dyck word $v$.
Now the map $\psi$ is given by
$\psi_{n,A,B}(w) = \psi_{n,A-\{a\},B}(v)$.

\item[--]
The last case is that the letters $D_{b-1}$ and $D_{b}$ appear
to the left of the letters $U_{a-1}$ and $U_{a}$,
that is, we have
\begin{align*}
D_{b-1} \cdots D_{b} \cdots U_{a-1} \cdots U_{a}.
\end{align*}
Similar to the previous case,
replace $D_{b}$ with $UD_{b}D_{b+1}$ and relabel to obtain the Dyck word~$v$
and set
$\psi_{n,A,B}(w) = \psi_{n,A,B-\{b\}}(v)$.
\end{itemize}
In each of these three cases observe that
the calculation of $\psi_{n,A',B'}(v)$
will not change the initial segment of the word $v$.
Hence since $v$ is
$(\{a\},\{b\})$-compatible, 
$(\{a\},\emptyset)$-compatible,
and
$(\emptyset,\{b\})$-compatible in each respective case,
we obtain that the image
$\psi_{n,A',B'}(v)$ is also so.
Thus the Dyck word $\psi_{n,A',B'}(v)$ is $(A,B)$-compatible.
By this construction we have that  $\psi_{n,A,B}$ is
the inverse of $\phi_{n,A,B}$
and the result follows.
\end{proof}

By using the same proof ideas we also obtain
the corresponding result for balanced words.
\begin{proposition}
\label{proposition_compatible_balanced}
The number of balanced words of semilength $n$ that are $(A,B)$-compatible
is given by the central binomial coefficient 
$\binom{2 \cdot(n-|A|-|B|)}{n-|A|-|B|}$.
\end{proposition}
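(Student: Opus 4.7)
The plan is to mimic the proof of Theorem~\ref{theorem_compatible} verbatim, simply dropping the requirement that the lattice path stay above the horizontal axis. Let $\mathcal{B}_{n}$ denote the set of all balanced $UD$-words of semilength $n$, and let $\mathcal{B}_{n,A,B}$ denote the subset of $(A,B)$-compatible balanced words. Since a balanced word of semilength $m$ is just any arrangement of $m$ letters $U$ and $m$ letters $D$, we have $|\mathcal{B}_{m}| = \binom{2m}{m}$, so it suffices to exhibit a bijection $\phi_{n,A,B}: \mathcal{B}_{n,A,B} \longrightarrow \mathcal{B}_{n-|A|-|B|}$.

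The bijection is the same as before: given $w \in \mathcal{B}_{n,A,B}$, first replace every factor of the form $U_{a}U_{a+1}D_{b}D_{b+1}$ with $a \in A$ and $b \in B$ by the empty word, then replace every remaining factor $U_{a}U_{a+1}D$ with $a \in A$ by $U$, and finally replace every remaining factor $UD_{b}D_{b+1}$ with $b \in B$ by $D$. Each substitution removes as many $U$'s as $D$'s, so the resulting word is balanced, and its semilength is $n - |A| - |B|$. The same case analysis as in the proof of Theorem~\ref{theorem_compatible} --- based on the relative order of the letters $U_{a-1}, U_{a}, D_{b-1}, D_{b}$ where $a = \min(A)$ and $b = \min(B)$, with the conventions for $a=1$ or $b=1$ handled identically --- defines the inverse $\psi_{n,A,B}: \mathcal{B}_{n-|A|-|B|} \longrightarrow \mathcal{B}_{n,A,B}$ recursively on $|A|+|B|$.

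The key observation is that every step in the proof of Theorem~\ref{theorem_compatible} that verified the map $\phi_{n,A,B}$ was well-defined never used the Dyck condition; it relied only on recognizing the factors $U_{a}U_{a+1}D$, $UD_{b}D_{b+1}$, and $U_{a}U_{a+1}D_{b}D_{b+1}$, and on the insertion procedure preserving the relative order of the remaining letters. Likewise, the recursive construction of $\psi_{n,A,B}$ produces a word with prescribed local factors without any appeal to the nonnegativity of the partial sums. Thus there is no real obstacle: the Dyck property was preserved automatically in the previous proof, but it was never needed for the bijection to be defined or invertible.

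Combining these observations gives $|\mathcal{B}_{n,A,B}| = |\mathcal{B}_{n-|A|-|B|}| = \binom{2(n-|A|-|B|)}{n-|A|-|B|}$, completing the argument. The only point that warrants any care is making sure that the three subcases used to define $\psi_{n,A,B}$ when both $A$ and $B$ are nonempty still partition the possible relative orderings of $U_{a-1}, U_{a}, D_{b-1}, D_{b}$ in a balanced (rather than Dyck) word; this is immediate because these orderings depend only on the positions of four distinguished letters in the word, independent of any height restriction.
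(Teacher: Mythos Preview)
Your proposal is correct and follows exactly the approach the paper intends: the paper does not give a separate proof but simply states ``By using the same proof ideas we also obtain the corresponding result for balanced words,'' and you have carried this out, correctly noting that neither the forward replacements nor the recursive inverse construction ever invoke the nonnegativity of partial sums.
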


Using Krattenthaler's bijection between $123$-avoiding
permutations on the set $[n]$ and Dyck paths of semilength~$n$ presented in
Subsection~\ref{subsection_lattice_paths},
Theorem~\ref{theorem_compatible} has a remarkable
consequence for the ascent statistics of $123$-avoiding permutations. 
A $123$-avoiding permutation $\pi$ has a sparse ascent set,
and the same holds for the inverse permutation~$\pi^{-1}$,
which is also $123$-avoiding.  
\begin{corollary}
Let $A$ and $B$ be sparse subsets of $[n-1]$. Then the number of
$123$-avoiding permutations~$\pi$ in $\SSSS_{n}$ such that 
$B \subseteq \Asc(\pi)$ and $A \subseteq \Asc(\pi^{-1})$
is given by the Catalan number $C_{n-|A|-|B|}$. 
\end{corollary}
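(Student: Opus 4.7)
The plan is to push Krattenthaler's bijection $\kappa$ between $123$-avoiding permutations of $[n]$ and Dyck paths of semilength $n$ through, translating the hypotheses $B \subseteq \Asc(\pi)$ and $A \subseteq \Asc(\pi^{-1})$ into $(A',B)$-compatibility of $w = \kappa(\pi)$ for a suitable sparse set $A' \subseteq [n-1]$. Once this translation is in place, Theorem~\ref{theorem_compatible} immediately yields the count $C_{n-|A|-|B|}$.

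The first and main step is to give intrinsic descriptions of both ascent sets in terms of the left-to-right minima of $\pi$. I would prove that for any $123$-avoiding $\pi \in \SSSS_n$, the position $b$ lies in $\Asc(\pi)$ if and only if $\pi(b)$ is a left-to-right minimum of $\pi$ while $\pi(b+1)$ is not, and that the value $a$ lies in $\Asc(\pi^{-1})$ if and only if $a$ is a left-to-right minimum of $\pi$ while $a+1$ is not. The first equivalence uses only that the complement of the left-to-right minima of a $123$-avoiding permutation forms a decreasing subsequence, so the sole way to obtain $\pi(b)<\pi(b+1)$ is for $\pi(b)$ to restart the minima and $\pi(b+1)$ to lie in the decreasing tail. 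The second equivalence, reformulated as ``$a$ precedes $a+1$ in $\pi$'', needs a short case analysis invoking $123$-avoidance: if $a$ were not a left-to-right minimum, some $x<a$ preceding $a$ would create the forbidden pattern $x,a,a+1$; if $a+1$ were a left-to-right minimum, every value preceding it would exceed $a+1$, excluding $a$. The converse direction follows by a similar contradiction using the LR-minimum defining property.

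Next, I would interpret these equivalences as factor conditions on the labeled Dyck word $w = \kappa(\pi)$, with U-steps and D-steps indexed left to right as in Section~\ref{section_compatible}. From the construction of $\kappa$, the $b$-th D-step $D_b$ is labeled $\pi(b)$ and the $k$-th U-step from the left carries the Krattenthaler label $n+1-k$. Moreover $D_b$ is preceded by a U-step exactly when $\pi(b)$ is a left-to-right minimum, and $D_{b+1}$ follows $D_b$ immediately exactly when $\pi(b+1)$ is not. Hence the first equivalence shows that $b \in \Asc(\pi)$ is equivalent to $UD_b D_{b+1}$ being a factor of $w$. The U-steps at positions $n-a$ and $n-a+1$ carry Krattenthaler labels $a+1$ and $a$; they sit adjacent precisely when $a+1$ is not a left-to-right minimum, and the second is followed by a D precisely when $a$ is. Thus the second equivalence shows that $a \in \Asc(\pi^{-1})$ is equivalent to $U_{n-a}U_{n-a+1}D$ being a factor of $w$.

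Finally, I would set $A' = \{n-a : a \in A\}$, which is a sparse subset of $[n-1]$ since the map $a \mapsto n-a$ preserves absolute differences. Combining the two factor equivalences, the set of $123$-avoiding permutations $\pi \in \SSSS_n$ with $B \subseteq \Asc(\pi)$ and $A \subseteq \Asc(\pi^{-1})$ maps bijectively under $\kappa$ onto the set ${\mathcal D}_{n,A',B}$, which by Theorem~\ref{theorem_compatible} has cardinality $C_{n-|A'|-|B|} = C_{n-|A|-|B|}$. The main obstacle is the second ascent characterization and the corresponding factor translation; once the role played by the Krattenthaler U-step labels is clarified, the remainder is a routine reading of the recipe for $\kappa$.
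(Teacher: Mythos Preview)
Your proposal is correct and follows essentially the same route as the paper: translate the two ascent conditions through Krattenthaler's bijection into factor conditions on the Dyck word, then invoke Theorem~\ref{theorem_compatible}. The only cosmetic difference is that you make the re-indexing $A'=\{n-a:a\in A\}$ explicit (which the paper's sketch leaves implicit), and your characterization of $\Asc(\pi^{-1})$ via left-to-right \emph{minima} is the correct one---the paper's post-corollary discussion contains a slip where ``minimum'' is written as ``maximum'' and the precedence of $j$ and $j+1$ is reversed, but the final $UUD$ conclusion there agrees with yours.
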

For a $123$-avoiding permutation $\pi$,
it is easy to verify that $i$ is an ascent of $\pi$ if and only
if $\pi(i)$ is a left-to-right minimum and $\pi(i+1)$ is not. Equivalently,
in the corresponding Dyck path, the peak $UD$ labeled $\pi(i)$ must be part of
a factor $UDD$. Similarly, $j=\pi(i)$ is an ascent of the inverse~$\pi^{-1}$ if and
only if $j+1$ precedes $j$ in the word $\pi(1)\cdots \pi(n)$.
This happens if and only $j=\pi(i)$ is a left-to-right maximum and $j+1$ is not.
Equivalently, in the corresponding Dyck path, the peak $UD$ labeled
$\pi(i)$ must be part of a factor~$UUD$. 

Now we obtain two more interpretations of the coefficients of
the toric $g$-contribution polynomial~$g_{n,j}(x)$.

\begin{proposition}
\label{proposition_gamma_contribution} 
Let $B$ be a sparse subset of the set $[n-1]$.
The coefficient of $x^{k}$ in the
toric $g$-contribution
polynomial~$g_{n,|B|}(x)$ is
given by:
\begin{itemize}
\item[(a)]
the number of all $(\emptyset,B)$-compatible Dyck words
of semilength $n$ that contain
exactly $k$ copies of the word $UUD$ as a factor.
\item[(b)]
the number of $123$-avoiding permutations $\pi \in \SSSS_{n}$ such that
$B \subseteq \Asc(\pi)$ and
the inverse permutation $\pi^{-1}$ has exactly $k$ ascents.
\end{itemize}
\end{proposition}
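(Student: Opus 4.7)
The plan is to establish part~(a) by a direct generating function calculation resting on Theorem~\ref{theorem_compatible}, and then to deduce part~(b) by transporting this enumeration across Krattenthaler's bijection.

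For part~(a), I would consider the polynomial
\begin{align*}
G(x) = \sum_{\substack{w \in \mathcal{D}_{n} \\ B \subseteq B(w)}} x^{|A(w)|},
\end{align*}
where $A(w) = \{a : U_{a}U_{a+1}D \text{ is a factor of } w\}$ records the $UUD$ factors and $B(w) = \{b : UD_{b}D_{b+1} \text{ is a factor of } w\}$ encodes the $(\emptyset,B)$-compatibility condition. Since $U_{a+1}$ cannot simultaneously be followed by $D$ and by $U_{a+2}$, the set $A(w)$ is automatically sparse. Expanding $x^{|A(w)|} = \sum_{A \subseteq A(w)}(x-1)^{|A|}$ and switching the order of summation yields
\begin{align*}
G(x) = \sum_{\substack{A \subseteq [n-1] \\ A \text{ sparse}}} (x-1)^{|A|} \cdot |\mathcal{D}_{n,A,B}|.
\end{align*}
Substituting $|\mathcal{D}_{n,A,B}| = C_{n-|A|-|B|}$ from Theorem~\ref{theorem_compatible} and grouping sparse subsets by cardinality via the classical count $\binom{n-k}{k}$ of sparse $k$-subsets of $[n-1]$ identifies $G(x)$ term by term with equation~\eqref{equation_gp}, so $G(x) = g_{n,|B|}(x)$; extracting the coefficient of $x^{k}$ then gives~(a).

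For part~(b), I would apply Krattenthaler's bijection between $123$-avoiding permutations $\pi \in \SSSS_{n}$ and Dyck words $w$ of semilength $n$. A careful reading of the bijection shows that the $D$-steps of $w$ listed left to right spell out $\pi(1), \pi(2), \ldots, \pi(n)$, so the condition $b \in B(w)$ translates into the statement that $\pi(b)$ is a left-to-right minimum and $\pi(b+1)$ is not. Using the $123$-avoidance of $\pi$, one checks that this is equivalent to $\pi(b) < \pi(b+1)$, yielding $B(w) = \Asc(\pi)$ as subsets of $[n-1]$. A parallel analysis of the $U$-steps, which Krattenthaler labels in decreasing left-to-right order, shows that $a \in A(w)$ is equivalent to the condition that $n-a$ is a left-to-right minimum of $\pi$ and $n-a+1$ is not; by the same $123$-avoidance argument this is equivalent to $n-a \in \Asc(\pi^{-1})$, so that $|A(w)| = |\Asc(\pi^{-1})|$. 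Part~(b) then follows at once from part~(a).

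The main delicate point is reconciling in part~(b) the two distinct labeling conventions: Krattenthaler labels $U$-steps in decreasing left-to-right order and $D$-steps by the values $\pi(1), \ldots, \pi(n)$, whereas the $(A,B)$-compatibility notation of Theorem~\ref{theorem_compatible} uses left-to-right labels $1, \ldots, n$ for both classes of steps. Once this translation is carefully verified, the identification of $B(w)$ with $\Asc(\pi)$ at the set level, and of $|A(w)|$ with $|\Asc(\pi^{-1})|$ at the cardinality level, follows from the standard $123$-avoiding characterization of ascents in terms of left-to-right minima, and the remainder is routine combinatorial bookkeeping.
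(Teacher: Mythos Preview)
Your proof is correct and follows essentially the same approach as the paper's: both arguments pass to $g_{n,j}(x+1)$ via the binomial expansion (you phrase this as $x^{|A(w)|}=\sum_{A\subseteq A(w)}(x-1)^{|A|}$, the paper as ``marking'' $UUD$ factors), invoke Theorem~\ref{theorem_compatible} to obtain the count $C_{n-|A|-|B|}$, and group sparse subsets by cardinality via $\binom{n-k}{k}$; part~(b) is then deduced from Krattenthaler's bijection using the $UDD$/$UUD$ characterizations of $\Asc(\pi)$ and $\Asc(\pi^{-1})$ described in the paragraph preceding the proposition. Your treatment of the labeling translation in~(b) is in fact more careful than the paper's, which merely cites the bijection.
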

\begin{proof}
Mimicking the proof of Theorem~\ref{theorem_peaks}, let us compute
$g_{n,j}(x+1)$. It suffices to show that the coefficient of $x^{k}$ in
$g_{n,j}(x+1)$ is the number of Dyck words $v$
where we have marked $k$ of the factors~$UUD$.
Label the Dyck word $v$.
Then these marked factors correspond to a sparse set~$A$ of size~$k$ in the set~$[n-1$].
The number of sparse sets of cardinality $k$ is given by the binomial
coefficient~$\binomial{n-k}{k}$.
Hence the sought after number of $(\emptyset,B)$-compatible Dyck words
is $\binomial{n-k}{k} \cdot C_{n-k-j}$,
which is the coefficient of~$x^{k}$ in $g_{n,j}(x+1)$.
The second interpretation follows from Krattenthaler's bijection between
Dyck paths and $123$-avoiding permutations.
\end{proof}

By setting the set $B$ to be empty
in Proposition~\ref{proposition_gamma_contribution}
and switching to the inverse permutation, we obtain
the following corollary.
\begin{corollary}
\label{corollary_g_of_cube}  
The $k$th entry in the toric $g$-vector
of the $n$-dimensional cube is
the number of $123$-avoiding permutations in
the symmetric group~$\SSSS_{n}$
with exactly $k$ ascents.
\end{corollary}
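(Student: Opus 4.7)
The plan is to assemble the corollary from pieces already in place in the preceding sections: compute the $\gamma$-vector of the $n$-dimensional cube, invoke Theorem~\ref{theorem_peaks} to identify $g$ with $g_{n,0}$, apply Proposition~\ref{proposition_gamma_contribution}(b) with $B=\emptyset$, and finally use the involution $\pi\mapsto\pi^{-1}$ on $123$-avoiding permutations.

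First I would record the $\gamma$-vector of the $n$-cube. Since the $n$-cube has $f_{i}=\binom{n}{i}2^{n-i}$, equation~\eqref{equation_f_to_h_vector} gives $\sum_{i}h_{i}x^{i}=\sum_{i}\binom{n}{i}2^{n-i}(x-1)^{i}=(x+1)^{n}$. Comparing this with the defining identity~\eqref{equation_gamma_vector} of the $\gamma$-vector, one reads off $\gamma_{0}=1$ and $\gamma_{j}=0$ for all $1\leq j\leq \lfloor n/2\rfloor$. Theorem~\ref{theorem_peaks} then collapses to $g(\text{cube},x)=g_{n,0}(x)$, matching the remark made after equation~\eqref{equation_gp}.

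Next I would invoke Proposition~\ref{proposition_gamma_contribution}(b) with the sparse set $B$ taken to be $\emptyset$. The condition $B\subseteq\Asc(\pi)$ is then vacuous, so the coefficient of $x^{k}$ in $g_{n,0}(x)$ equals the number of $123$-avoiding permutations $\pi\in\SSSS_{n}$ whose inverse $\pi^{-1}$ has exactly $k$ ascents. To convert this into the desired count I would note, as is already observed in the text preceding the proposition, that $\pi\mapsto\pi^{-1}$ restricts to a bijection on the set of $123$-avoiding permutations of $[n]$ (a $123$-pattern in $\pi^{-1}$ would correspond to a $123$-pattern in $\pi$). Under this bijection, permutations with $k$ ascents in their inverse correspond to permutations with $k$ ascents, yielding the stated formula.

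There is no genuine obstacle: the argument is a direct specialization of the machinery already developed. The one computation that must be verified carefully is the $\gamma$-vector of the cube in the first step; everything else is bookkeeping built on Theorem~\ref{theorem_peaks} and Proposition~\ref{proposition_gamma_contribution}.
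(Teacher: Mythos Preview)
Your proof is correct and follows essentially the same route as the paper: set $B=\emptyset$ in Proposition~\ref{proposition_gamma_contribution}(b) and pass to the inverse permutation. The only difference is that you explicitly derive $g(\text{cube},x)=g_{n,0}(x)$ by computing the $\gamma$-vector of the cube, whereas the paper simply invokes the remark following equation~\eqref{equation_gp} (citing~\cite{Hetyei_second_look}) for this identification.
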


Proposition~\ref{proposition_gamma_contribution} may be also restated in
terms of noncrossing partition statistics, namely in terms of counting
nonsingleton blocks and filler points. 
Recall a partition of $[n]$ is {\em noncrossing} if for every four elements
$1 \leq a < b < c < d \leq n$ if
$a$ and $c$ occur in the same block and $b$ and $d$ occur in the same block,
then $a$, $b$, $c$ and $d$ all lie within the same block.
The notion of filler points was
introduced by Denise and Simion~\cite[Definition~2.5]{Denise_Simion}.
\begin{definition}
Let $\pi$ be a noncrossing partition of the set $[n]$. An element
$2 \leq i \leq n$ is a {\em filler} if one of the following conditions holds:
\begin{enumerate}
\item $i$ is the largest
  element of its block and $i-1$ belongs to the same block as $i$. 
\item  $i$ forms a singleton block and $i-1$ is not the largest
  element of its block. 
\end{enumerate}
\end{definition}  
The set of all filler points of a noncrossing
partition must be sparse: if $i$ is a filler then $i-1$ is not a
filler point. Proposition~\ref{proposition_gamma_contribution}  may be
restated as follows.

\begin{proposition}
\label{proposition_fillers} 
Let $J$ be a sparse subset of the interval $[2,n]$.
Then the coefficient of $x^{k}$ in $g_{n,|J|}(x)$ is the
number of noncrossing partitions of $[n]$ containing exactly $k$ nonsingleton
blocks whose set of filler points contains $J$.
\end{proposition}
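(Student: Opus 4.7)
The plan is to derive Proposition \ref{proposition_fillers} from Proposition \ref{proposition_gamma_contribution}(a) by constructing an explicit bijection $\Phi$ between noncrossing partitions of $[n]$ and Dyck words of semilength $n$, and verifying that $\Phi$ carries nonsingleton blocks to $UUD$ factors and filler points to positions of factors of the form $UD_b D_{b+1}$, after first applying the natural reflection $i \mapsto n+1-i$ to the underlying partition.

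The bijection $\Phi$ I would use is the standard encoding: reading $i$ from $1$ to $n$ in order, write $U^{k_i}$, where $k_i$ equals the size of the block containing $i$ if $i$ is the smallest element of its block and $k_i = 0$ otherwise, then always write a single $D$. Labeling the $D$'s left to right as $D_1, \ldots, D_n$, the letter $D_i$ is precisely the $D$ produced at step $i$. Two properties of $\Phi$ are straightforward to verify. First, each nonsingleton block of $\pi$ contributes exactly one $UUD$ factor to $\Phi(\pi)$, located inside its opening segment $U^{k_i} D$ with $k_i \geq 2$, so the $UUD$ factors of $\Phi(\pi)$ are in bijection with the nonsingleton blocks of $\pi$. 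Second, for $b \in [n-1]$ the factor $UD_b D_{b+1}$ appears in $\Phi(\pi)$ if and only if $b$ is a block-minimum of $\pi$ and $b+1$ is not.

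To relate the second property to filler points, I would first establish the simplified characterization: an index $j \in [2,n]$ is a filler of $\pi$ if and only if $j$ is the largest element of its block and $j-1$ is not. One direction is immediate from case~(1) of the definition; conversely, when $j-1$ belongs to a block $B'$ distinct from the block $B$ containing $j$, the noncrossing property, combined with the hypotheses that $j = \max(B)$ and that $j-1 < \max(B')$, forces $B = \{j\}$ to be a singleton, recovering case~(2). Next, compose $\Phi$ with the reflection $\pi \mapsto \pi^r$ that sends each block $\{a_1 < \cdots < a_\ell\}$ to $\{n+1-a_\ell, \ldots, n+1-a_1\}$. The reflection swaps block-minima with block-maxima and preserves the number of nonsingleton blocks, so setting $b = n+1-j$ we obtain that $j$ is a filler of $\pi$ if and only if $b$ is a block-minimum of $\pi^r$ while $b+1$ is not, which by the second property of $\Phi$ is equivalent to $UD_b D_{b+1}$ being a factor of $\Phi(\pi^r)$.

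Putting the pieces together, set $B = \{n+1-j : j \in J\}$, a sparse subset of $[n-1]$ of cardinality $|J|$. The map $\pi \mapsto \Phi(\pi^r)$ restricts to a bijection between the noncrossing partitions of $[n]$ whose filler set contains $J$ and that have exactly $k$ nonsingleton blocks, and the $(\emptyset, B)$-compatible Dyck words of semilength $n$ with exactly $k$ $UUD$ factors. The latter count equals the coefficient of $x^k$ in $g_{n,|B|}(x) = g_{n,|J|}(x)$ by Proposition \ref{proposition_gamma_contribution}(a), yielding the desired identity. The main obstacle will be the noncrossing case analysis that merges the two cases of the filler definition into the simplified ``$j$ block-max and $j-1$ not block-max'' condition; the remaining steps follow directly from the explicit structure of $\Phi$.
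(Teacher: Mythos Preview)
Your proof is correct and follows essentially the same route as the paper: both establish a bijection between noncrossing partitions and Dyck words under which nonsingleton blocks correspond to $UUD$ factors and fillers correspond to $UDD$ factors at prescribed positions, then invoke Proposition~\ref{proposition_gamma_contribution}(a). The only cosmetic difference is that the paper builds the reflection into its bijection by labeling the $D$-steps from right to left (so that $UD_iD_{i-1}$ factors pick out fillers directly), whereas you use the standard left-to-right encoding $\Phi$ and apply the reflection $\pi\mapsto\pi^r$ explicitly; your composite $\pi\mapsto\Phi(\pi^r)$ is precisely the inverse of the paper's map. Your intermediate simplification of the filler definition (``$j$ is a block-maximum and $j-1$ is not'') is a clean touch that the paper handles instead by a direct case analysis on the letter preceding each $UDD$ factor.
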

\begin{proof}
We use a variant of a well-known bijection between noncrossing
partitions of the set $[n]$ and Dyck paths of semilength $n$;
see for example~\cite[Fig.\ 2 (f)]{Simion}. Given a Dyck path of
semilength~$n$ from $(0,0)$ to $(2n,0)$ using $U$ steps $(1,1)$ and $D$
steps $(1,-1)$, reading from right to left label the down steps from $1$ to
$n$. Treating the down steps as right parentheses and the up steps as
left parentheses, pair each up step with a down step, and transfer
the label on the down step to the matching up step. The labels on the
longest runs of contiguous up steps then form the blocks of a
noncrossing partition. Under this bijection, nonsingleton blocks
correspond to contiguous runs of $U$ steps ending with a factor
$UUD$. Conversely, each factor $UUD$ marks the end of a run of $U$ steps
corresponding to a nonsingleton block. It is left to show that filler
points correspond exactly to the factors $UDD$ under this
correspondence, and the statement will then follow from
Proposition~\ref{proposition_gamma_contribution}.

Consider first a factor $UDD$. Since we labeled the $D$ steps in the right
to left order, adding the labeling yields a factor $UD_{i}D_{i-1}$. A
Dyck word cannot begin with a factor $UDD$, so there is at least one
letter $X$ immediately preceding our factor $UD_{i}D_{i-1}$. If $X=U$
holds, that is, there is a factor $UUD_{i}D_{i-1}$, then the letters $U$
in this factor are matched to the letters $D_{i}$ and $D_{i-1}$: both
$i$ and $i-1$ belong to the same block and $i$ is the maximum element of
this block, hence $i$ is a filler of type (i). If $X=D$
holds, that is, there is a factor $DUD_{i}D_{i-1}$ then the $U$ in this
factor is matched to $D_{i}$ and $\{i\}$ is a singleton block. The
letter $D_{i-1}$ is matched to an earlier $U$ occurring to the left.  This earlier $U$ cannot
be immediately followed by a $D$, hence $i-1$ is not the largest
element of its block. Therefore $i$ is a filler of type (ii). The
converse is straightforward and left to the reader.   
\end{proof}

Substituting $j=0$ into Proposition~\ref{proposition_fillers} yields
a new proof of~\cite[Lemma 6.4]{Hetyei_second_look}.
\begin{corollary}[Hetyei]
\label{corollary_nonsingleton}
The $k$th entry in the toric $g$-vector
of the $n$-dimensional cube is
the number of noncrossing
partitions on the set $[n]$ with exactly $k$ nonsingleton blocks. 
\end{corollary}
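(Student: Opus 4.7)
The plan is to derive the corollary as a direct specialization of Proposition~\ref{proposition_fillers} to $J=\emptyset$, after identifying $g_{n,0}(x)$ with the toric $g$-polynomial of the $n$-dimensional cube. For the $n$-cube, the face numbers $f_{i} = 2^{n-i}\binom{n}{i}$ give the $h$-polynomial
\begin{equation*}
\sum_{i=0}^{n} f_{i}(x-1)^{i} \;=\; (2+(x-1))^{n} \;=\; (1+x)^{n},
\end{equation*}
which already matches the $\gamma$-vector expansion~\eqref{equation_gamma_vector} with $\gamma_{0}=1$ and $\gamma_{j}=0$ for $j\geq 1$. Theorem~\ref{theorem_peaks} then reduces to $g(P,x) = g_{n,0}(x)$ when $P$ is the $n$-cube, recovering the remark made immediately after equation~\eqref{equation_gp}.

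Second, I would take $J=\emptyset$ in Proposition~\ref{proposition_fillers}. The empty set is vacuously sparse and has cardinality zero, and every noncrossing partition's set of filler points automatically contains it. Hence the coefficient of $x^{k}$ in $g_{n,0}(x)$ is precisely the number of noncrossing partitions of $[n]$ with exactly $k$ nonsingleton blocks, without any further restriction on the filler set. Combined with the identification of the first paragraph, this yields the claim. There is no real obstacle, since all of the genuine combinatorics has been absorbed into the proof of Proposition~\ref{proposition_fillers}, where nonsingleton blocks are encoded by factors $UUD$ and filler points by factors $UDD$ in the Dyck word associated to a noncrossing partition.
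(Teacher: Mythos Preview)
Your proposal is correct and follows essentially the same approach as the paper: the paper derives the corollary in one line by substituting $j=0$ into Proposition~\ref{proposition_fillers}, relying on the earlier remark after~\eqref{equation_gp} that $g_{n,0}(x)$ is the toric $g$-polynomial of the $n$-cube. You simply make that remark self-contained by computing the cube's $\gamma$-vector directly, which is a harmless elaboration.
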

It is worth noting that reading Dyck words backwards turns $UUD$
factors into $UDD$ factors and vice versa. Substituting $j=0$ into
the reverse variant of Proposition~\ref{proposition_fillers} yields
the following result of Denise and Simion~\cite{Denise_Simion}, first
pointed out in~\cite[Lemma 6.7]{Hetyei_second_look}. 
\begin{corollary}[Denise--Simion]
\label{corollary_Denise--Simion}
The $k$th entry in the toric $g$-vector
of the $n$-dimensional cube is
the number of noncrossing
partitions on the set $[n]$ with exactly $k$ fillers. 
\end{corollary}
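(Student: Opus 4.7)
The plan is to follow the suggestion made immediately before the statement: derive Corollary~\ref{corollary_Denise--Simion} from a ``reverse variant'' of Proposition~\ref{proposition_fillers}, using the fact that $g_{n,0}(x)$ is precisely the toric $g$-polynomial of the $n$-dimensional cube.

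First I would record the key involutive symmetry on Dyck words of semilength~$n$: namely, the map $w_{1}w_{2}\cdots w_{2n} \longmapsto \overline{w_{2n}}\,\overline{w_{2n-1}}\cdots \overline{w_{1}}$, where $\overline{U}=D$ and $\overline{D}=U$. This is a bijection from $\mathcal{D}_{n}$ to itself that sends every factor $UUD$ to a factor $UDD$ and every factor $UDD$ to a factor $UUD$, simply because reversing and complementing the three-letter word $UUD$ yields $UDD$.

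Next I would form the composite of this reverse-complement involution with the standard bijection between Dyck words of semilength~$n$ and noncrossing partitions of $[n]$ that is recalled in the proof of Proposition~\ref{proposition_fillers}. Under this composite bijection, $UDD$ factors of the original Dyck word correspond to $UUD$ factors after reverse-complementation, which in turn correspond (by the argument in the proof of Proposition~\ref{proposition_fillers}) to nonsingleton blocks of the resulting noncrossing partition; symmetrically, $UUD$ factors of the original word correspond to fillers. Running the entire argument of Proposition~\ref{proposition_fillers} through this reversed correspondence then yields the reverse variant: for any sparse subset $J$ of $[1,n-1]$, the coefficient of $x^{k}$ in $g_{n,|J|}(x)$ equals the number of noncrossing partitions of $[n]$ with exactly $k$ fillers whose set of nonsingleton-block markers contains $J$.

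Substituting $J=\emptyset$ removes the marker constraint and shows that the coefficient of $x^{k}$ in $g_{n,0}(x)$ equals the number of noncrossing partitions of $[n]$ with exactly $k$ fillers. Since $g_{n,0}(x)$ is the toric $g$-polynomial of the $n$-dimensional cube (as noted just before Theorem~\ref{theorem_peaks}), this is exactly the assertion of the corollary. I expect no serious obstacle: the only delicate point is the careful verification that the reverse-complementation really does interchange the roles of $UUD$ and $UDD$ factors at compatible positions; once this is in hand, the proof is a bookkeeping exercise, and the $J=\emptyset$ specialization avoids any subtlety about how the sparse index sets transform under reversal.
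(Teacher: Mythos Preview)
Your proposal is correct and follows exactly the route sketched in the paper: use the reverse--complement involution on Dyck words (what the paper calls ``reading Dyck words backwards'') to interchange the roles of $UUD$ and $UDD$ factors, obtain the reverse variant of Proposition~\ref{proposition_fillers}, and then specialize to $J=\emptyset$. Your observation that the $J=\emptyset$ case sidesteps any bookkeeping about how the sparse index set transforms under reversal is exactly the right simplification.
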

It has been pointed out in~\cite{Hetyei_second_look} that the work of
Denise and Simion~\cite[Remark~3.4]{Denise_Simion} implicitly contains
the description of a simplicial complex whose face numbers are the
coefficients of the powers of $x$ in $g_{n,0}(x)$. The first such
construction was given by Billera, Chan and Liu~\cite{Billera_Chan_Liu}. 
Here we add a third possibility.  It is a direct consequence of
Corollary~\ref{corollary_nonsingleton}.
\begin{corollary}
\label{corollary_nccomplex}
The $k$th entry in the toric $g$-vector
of the $n$-dimensional cube is
the number of
$(k-1)$-dimensional faces in the following simplicial complex:
\begin{enumerate}
\item
The vertices are the subsets of $[n]$ containing at least two elements.
\item A collection $\{S_{1},S_{2},\ldots,S_{k}\}$ is a face if and only if there is a
noncrossing partition on the set~$[n]$ whose nonsingleton blocks are exactly
the sets $S_{1},S_{2},\ldots,S_{k}$. 
\end{enumerate}  
\end{corollary}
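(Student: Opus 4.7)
The plan is to derive this directly from Corollary~\ref{corollary_nonsingleton} by setting up a dimension-preserving bijection between the $(k-1)$-dimensional faces of the proposed complex and the noncrossing partitions of $[n]$ having exactly $k$ nonsingleton blocks. The bijection itself is forced by the definition: a noncrossing partition $\pi$ with $k$ nonsingleton blocks is sent to the collection of those blocks, which forms a face with $k$ vertices and hence dimension $k-1$. Conversely, a face $\{S_{1},\ldots,S_{k}\}$ determines a unique noncrossing partition of $[n]$, since the elements of $[n] \setminus (S_{1}\cup\cdots\cup S_{k})$ must then form singleton blocks. These two assignments are mutual inverses.

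Before applying Corollary~\ref{corollary_nonsingleton}, I would verify that the proposed collection is in fact a simplicial complex, that is, closed under taking subsets. Let $\{S_{1},\ldots,S_{k}\}$ be a face, realized by a noncrossing partition $\pi$, and let $\{S_{i_{1}},\ldots,S_{i_{j}}\}$ be any subcollection. Form a new set partition $\pi'$ of $[n]$ by keeping the sets $S_{i_{1}},\ldots,S_{i_{j}}$ as blocks and declaring every remaining element to be its own singleton block. The nonsingleton blocks of $\pi'$ are precisely $S_{i_{1}},\ldots,S_{i_{j}}$, so it suffices to check that $\pi'$ is still noncrossing. This is automatic: replacing a block of $\pi$ with singletons can never create a crossing, since a singleton contains only one element and so cannot occupy two of the four positions $a<b<c<d$ that appear in the definition of a crossing.

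The only mildly delicate step is this closure under subsets, and as sketched it reduces to the one-line observation that singletons do not participate in crossings. With the simplicial complex property established and the bijection in hand, Corollary~\ref{corollary_nonsingleton} supplies the count of $(k-1)$-dimensional faces and completes the proof.
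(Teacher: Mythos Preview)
Your proof is correct and follows the same route as the paper, which simply states that the result is a direct consequence of Corollary~\ref{corollary_nonsingleton}. You have added the verification that the described collection of faces is genuinely a simplicial complex (closure under subsets via the observation that singletons cannot participate in crossings), a detail the paper leaves implicit.
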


\section{Generating functions of the toric $g$-contributions}
\label{section_Generating_functions}

In this section we compute the ordinary generating function of
the toric $g$-contribution polynomials~$g_{n,j}(x)$
defined in equation~\eqref{equation_gp}.
Note this is a valid definition for all
$0 \leq j \leq n$ even if we use only the polynomials $g_{n,j}(x)$
satisfying $0\leq j\leq \half{n}$ in our toric $g$-polynomial
formulas. We extend the definition of $g_{n,j}(x)$ to all $j\geq 0$ by
setting $g_{n,j}(x)=0$ whenever $j>n$. Keeping this extension in mind,
we obtain the following recurrence.
\begin{lemma}
\label{lemma_grecurrence}  
The toric $g$-contribution polynomials~$g_{n,j}(x)$ satisfy 
the recurrence
\begin{equation}
g_{n,j}(x)= g_{n-1,j-1}(x)+(x-1)\cdot g_{n-2,j-1}(x)
\end{equation}
for $n \geq 2$ and $j \geq 1$.
\end{lemma}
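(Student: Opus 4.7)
The plan is to prove the recurrence directly from the defining formula
\eqref{equation_gp} using Pascal's identity. The manipulations are straightforward, so the only subtlety will be tracking the summation bounds.

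First I would write out the right-hand side using definition~\eqref{equation_gp}. The first term becomes
\[
g_{n-1,j-1}(x)=\sum_{k\geq 0} C_{n-k-j}\cdot\binom{n-1-k}{k}\cdot(x-1)^{k},
\]
and for the second term I would multiply by $(x-1)$ and then shift the index $k\mapsto k-1$ to obtain
\[
(x-1)\cdot g_{n-2,j-1}(x)=\sum_{k\geq 1} C_{n-k-j}\cdot\binom{n-1-k}{k-1}\cdot(x-1)^{k}.
\]
Adding the two sums and applying Pascal's identity
$\binom{n-1-k}{k}+\binom{n-1-k}{k-1}=\binom{n-k}{k}$ yields the sum defining $g_{n,j}(x)$.

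The only thing left to verify is that the combined summation range matches the range $0\leq k\leq \min(\lfloor n/2\rfloor,n-j)$ prescribed by~\eqref{equation_gp}. I would handle this by splitting into parity cases for $n$: when $n$ is odd, $\lfloor(n-1)/2\rfloor=\lfloor n/2\rfloor$, so the first sum already reaches the correct upper limit; when $n$ is even, the first sum stops one short of $\lfloor n/2\rfloor$, but the shifted second sum supplies the missing top term (and the binomial coefficient $\binom{n-1-k}{k}$ vanishes at that top index, so Pascal's identity still produces the correct coefficient $\binom{n-k}{k}$). The cutoff $n-j$ is handled uniformly since both shifted and unshifted sums respect it, using the convention $g_{n,j}(x)=0$ for $j>n$ to absorb the boundary case $j=n$ or $j=n-1$.

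The main (minor) obstacle is purely bookkeeping: making sure that no term is double-counted or dropped at the extreme values of $k$ in both parities. No further combinatorial input is needed; the recurrence is an immediate algebraic consequence of Pascal's identity once the indexing is aligned.
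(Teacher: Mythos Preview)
Your proposal is correct and follows essentially the same route as the paper: both arguments compare coefficients and reduce the identity to Pascal's recursion $\binom{n-1-k}{k}+\binom{n-1-k}{k-1}=\binom{n-k}{k}$, with the common Catalan factor $C_{n-k-j}$ carried along unchanged. The only cosmetic difference is that the paper first substitutes $x\mapsto x+1$ and then compares coefficients of $x^{k}$, whereas you work directly with the coefficients of $(x-1)^{k}$; your extra discussion of the summation endpoints is more explicit than what the paper provides, but the underlying computation is identical.
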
  
\begin{proof}
We prove the equivalent expression
\begin{align}
g_{n,j}(x+1) &= g_{n-1,j-1}(x+1)+x \cdot g_{n-2,j-1}(x+1)
\label{equation_(5.1)_shifted}
\end{align}
for $n \geq 2$ and $j \geq 1$.
The coefficient of $x^{k}$ is 
$C_{n-j-k}\binom{n-k}{k}$ on the left-hand side 
of~\eqref{equation_(5.1)_shifted} and
$C_{n-j-k}\binom{n-1-k}{k}+C_{n-j-k}\binom{n-1-k}{k-1}$ on the right-hand
side. The statement is a direct consequence of 
the Pascal recursion
$\binom{n-k}{k} = \binom{n-1-k}{k} + \binom{n-1-k}{k-1}$.
\end{proof}

We introduce the generating function $G_{j}(x,t)$
of the toric $g$-contribution polynomials~$g_{n,j}(x)$.
\begin{align}
\label{equation_G_j}
G_{j}(x,t) & = \sum_{n \geq 2j} g_{n,j}(x)\cdot t^{n} .
\end{align}
Lemma~\ref{lemma_grecurrence} implies
\begin{align*}
G_{j}(x,t)
& =
(t+(x-1)\cdot t^{2}) \cdot G_{j-1}(x,t)
=
t \cdot (1-t+xt) \cdot G_{j-1}(x,t)
\end{align*}  
and hence
\begin{align*}
G_{j}(x,t)
& =
t^{j} \cdot (1-t+xt)^{j} \cdot G_{0}(x,t) .
\end{align*}

The table of the coefficients of $G_{0}(x,t)$ is sequence A091156
in~\cite{OEIS}. It is stated in~\cite{OEIS} that the generating function
$G_{0}(x,t)$ is the solution of the quadratic equation
\begin{equation}
\label{equation_gzero}  
G_{0}(x,t)= t\cdot (1-t+xt)\cdot G_{0}(x,t)^{2} + 1.
\end{equation}
This statement is equivalent to the following recurrence.
\begin{proposition}
The toric $g$-contribution polynomials $g_{n,0}(x)$ satisfy
\begin{align*}
g_{n,0}(x)
& =
(x-1) \cdot \sum_{m=2}^{n-1} g_{m-2,0}(x)\cdot g_{n-m,0}(x)
+
\sum_{m=1}^{n-1} g_{m-1,0}(x)\cdot g_{n-m,0}(x)
\quad \text{for $n \geq 1$.}
\end{align*}
\end{proposition}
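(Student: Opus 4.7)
My plan is to prove the equivalent quadratic equation
\begin{align*}
G_{0}(x,t) = 1 + t(1-t+xt)\cdot G_{0}(x,t)^{2}
\end{align*}
cited from OEIS A091156, and then extract coefficients of $t^{n}$ to obtain the recurrence. Thus the argument splits into two steps: derive the quadratic equation, then expand.

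For the first step, I would show that $G_{0}(x,t)$ is in fact the Catalan generating function $C$ evaluated at the argument $u=t(1-t+xt)$. Starting from the closed form~\eqref{equation_gp} with $j=0$, and using that $\binom{n-k}{k}=0$ for $k>\lfloor n/2\rfloor$ so that $k$ may be summed freely, I substitute $m=n-k$ and interchange the order of summation:
\begin{align*}
G_{0}(x,t)
\;=\;\sum_{n\geq 0}\sum_{k\geq 0} C_{n-k}\binom{n-k}{k}(x-1)^{k}\,t^{n}
\;=\;\sum_{m\geq 0} C_{m}\,t^{m}\sum_{k\geq 0}\binom{m}{k}\bigl((x-1)t\bigr)^{k}.
\end{align*}
The inner sum collapses by the binomial theorem to $(1+(x-1)t)^{m}$, so
\begin{align*}
G_{0}(x,t)\;=\;\sum_{m\geq 0}C_{m}\,\bigl(t(1-t+xt)\bigr)^{m}\;=\;C\bigl(t(1-t+xt)\bigr),
\end{align*}
with $C(u)$ as in~\eqref{equation_Catalan_generating_function}. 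Applying the standard Catalan functional equation $C(u)=1+u\,C(u)^{2}$ with $u=t(1-t+xt)$ delivers the quadratic equation at once.

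For the second step, I write $t(1-t+xt)=t+(x-1)t^{2}$ and extract the coefficient of $t^{n}$ from both sides of the quadratic equation, applying the Cauchy product to $G_{0}(x,t)^{2}$. The two resulting convolutions, after reindexing via $m=k+1$ and $m=k+2$ respectively, produce precisely the two sums on the right-hand side of the stated recurrence.

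There is no serious obstacle: the heart of the argument is the recognition that, after a binomial-theorem collapse, $G_{0}(x,t)$ is the Catalan generating function evaluated at $u=t(1-t+xt)$. As a back-up, a purely combinatorial proof is available using Corollary~\ref{corollary_nonsingleton}: decomposing a noncrossing partition of $[n]$ by the size $k$ of the block containing $1$, the block contributes a weight of $x$ when $k\geq 2$ and the $k$ gaps produce a factor $G_{0}(x,t)^{k}$, so the geometric sum over $k\geq 1$ yields $G_{0}-1 = tG_{0} + \frac{x\,(tG_{0})^{2}}{1-tG_{0}}$, which rearranges to the same quadratic equation.
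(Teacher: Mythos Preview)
Your proposal is correct, and it takes a genuinely different route from the paper.  The paper argues combinatorially: it invokes the interpretation (established earlier in the paper) that the coefficient of $x^{k}$ in $g_{n,0}(x)$ counts Dyck words of semilength~$n$ having exactly $k$ factors $UUD$, and then performs the standard first-return-to-the-axis decomposition $w=UuDv$.  A case split according to whether $u$ begins with $UD$ (so that the initial $U$ creates a new $UUD$ factor) yields the two convolutions, with weights $x$ and $1-[m\geq 2]$ combining into the factor $x-1$.  Your approach, by contrast, is purely algebraic: you recognise from the closed form~\eqref{equation_gp} via the binomial theorem that $G_{0}(x,t)=C\bigl(t(1-t+xt)\bigr)$, and then the Catalan functional equation $C(u)=1+uC(u)^{2}$ does all the work.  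This is shorter and requires no combinatorial interpretation at all; it also makes the identity $G_{0}(x,t)=C\bigl(t(1-t+xt)\bigr)$ explicit, which is a pleasant structural byproduct.  Your noncrossing-partition backup is yet a third route and is also valid.

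One small caveat: when you extract the coefficient of $t^{n}$ from $t\,G_{0}^{2}+(x-1)t^{2}G_{0}^{2}$ and reindex, the resulting sums run over $m=1,\ldots,n$ and $m=2,\ldots,n$, not up to $n-1$ as printed in the statement.  (The paper's own combinatorial argument also gives upper limit $n$, since the first return can occur at $(2n,0)$; the printed upper limits appear to be a typo---note that the stated recurrence fails already for $n=1$ and $n=2$ as written.)  So your phrase ``produce precisely the two sums on the right-hand side'' should be read modulo this indexing slip in the displayed identity.
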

\begin{proof}
Recall that the coefficient of $x^{k}$ in
$g_{n,0}(x)$ is the number of Dyck words of semilength $n$ containing
exactly $k$ factors $UUD$. Let $m$ be the least positive integer such
that the associated Dyck path touches the horizontal axis at
$(2m,0)$. In this case the Dyck word factors as $UuDv$ where $u$ is a
Dyck word of semilength $m-1$ and $v$ is a Dyck word of semilength
$n-m$. The words $u$ and $v$ may be chosen independently. As $v$ ranges
over all possible choices, we obtain a factor of $g_{n-m,0}(x)$. 

\noindent 
Case 1: The word $u$ begins with a factor $UD$, hence we
have the word $UUDu'Dv$. In this case the first factor $UUD$ contributes a
factor of $x$, and $u'$ is a Dyck word of semilength $m-2$, which may be
chosen independently. The contribution of all Dyck words belonging to
this case is $x \cdot g_{m-2,0}(x) \cdot g_{n-m,0}(x)$
(where $m \geq 2$ must hold).

\noindent 
Case 2: The word $u$ does not begin with a factor
$UD$. If we choose the word $u$ in all possible ways, we obtain a
contribution of $g_{m-1,0}(x) \cdot g_{n-m,0}(x)$. From this we must subtract
the contribution of the words~$u$ beginning with a factor $UD$. Based on
the previous case, $g_{m-2,0}(x) \cdot g_{n-m,0}(x)$ must be subtracted
(when $m \geq 2$ holds).
\end{proof}
The solution of equation~\eqref{equation_gzero} is 
\begin{equation}
G_{0}(x,t)=\frac{1 - \sqrt{4(1-x)t^{2} - 4t + 1}}{2t(1 - t + xt)}.
\end{equation}   
This expression, combined with~\eqref{equation_G_j} yields
\begin{equation}
G_{1}(x,t)=\frac{1 - \sqrt{4(1-x)t^{2} - 4t + 1}}{2}.
\end{equation}   
The table of the coefficients of $G_{1}(x,t)$ is sequence A091894 
in~\cite{OEIS}. Among other interpretations listed there, the entry
tabulates the number of Dyck words of
semilength $n$ containing $k$ factors $DDU$. It should be noted that the
generating function given there (using our variable assignment) is
$$
\frac{1 - \sqrt{4(1-x)t^{2} - 4t + 1}}{2xt},
$$
which is $G_{1}(x,t)/(xt)$ in our notation.

Another way to compute
the toric $g$-contribution polynomials~$g_{n,j}(x)$
is by introducing the {\em peak polynomials}. 

\begin{definition}
Given a pair $(n,m)$ of nonnegative integers satisfying $0\leq m\leq
2n$, the {\em peak polynomial $p_{n,m}(x)$} is the total weight
of all Dyck words of semilength $n$, where the weight of each Dyck
word is $x$ to the power of the number of $UD$ factors occurring in the
length $m$ prefix of the word.
\end{definition}  

Lemma~\ref{lemma_peaks} and Theorem~\ref{theorem_peaks} imply
\begin{align}
g_{n,j}(x) & = p_{n-j,n}(x).
\end{align}  

Let $N_{k}(x)$ denote the Narayana polynomial
\begin{align*}
N_{k}(x)
& =
\frac{1}{k} \cdot
\sum_{j=1}^{k} \binom{k}{j} \cdot \binom{k}{j - 1} \cdot x^{j} ,
\end{align*}
for $k>0$ and set $N_{0}(x)=x$.
\begin{proposition}
\label{prop_peakrec}  
The peak polynomial $p_{n,m}(x)$ satisfies the recurrence
\begin{align*}
p_{n,m}(x)
& =
\sum_{k=0}^{\lfloor\half{(m-2)}\rfloor} N_{k}(x)\cdot p_{n-k-1,m-2k-2}(x)
+
\sum_{k=\lceil\half{(m-1)}\rceil}^{n-1} p_{k,m-1}(x)\cdot C_{n-k-1} 
\end{align*}
for $1\leq m\leq 2n$, and from the initial conditions $p_{n,0}(x)=C_{n}$ for $n \geq 0$.
\end{proposition}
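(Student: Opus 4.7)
The plan is to prove the recurrence by the standard first-return decomposition of Dyck words. Every Dyck word $w$ of semilength $n \geq 1$ factors uniquely as $w = U \cdot u \cdot D \cdot v$, where $u$ is a Dyck word of semilength $k$ with $0 \leq k \leq n-1$ and $v$ is a Dyck word of semilength $n-k-1$; here $2k+2$ is the position of the first return of the corresponding lattice path to the horizontal axis. The base case $p_{n,0}(x) = C_n$ is immediate since an empty prefix contains no $UD$ factors.

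Before splitting into cases, I would verify that $N_{k}(x)$ is the generating function by number of peaks for ``arches'' of the form $UuD$ where $u$ is a Dyck word of semilength $k$. When $k = 0$ the arch is $UD$, contributing one peak, matching the stipulated convention $N_{0}(x) = x$. When $k \geq 1$, the word $u$ begins with $U$ and ends with $D$, so the peaks of $UuD$ are exactly the peaks of $u$, recovering the standard Narayana polynomial. The recurrence then splits on whether the first arch, of length $2k+2$, fits entirely inside the length-$m$ prefix of $w$.

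In Case 1, $2k+2 \leq m$ (equivalently $k \leq \lfloor (m-2)/2 \rfloor$), so the length-$m$ prefix of $w$ consists of the full arch $UuD$ followed by the length-$(m-2k-2)$ prefix of $v$. Since the arch ends with $D$ and $v$ begins with $U$ whenever it is nonempty, no $UD$ factor straddles the boundary, and the $UD$-factor count of the prefix splits as (peaks of $UuD$) plus ($UD$ factors in the length-$(m-2k-2)$ prefix of $v$). Summing over choices of $u$ and $v$ gives the contribution $N_{k}(x) \cdot p_{n-k-1,\, m-2k-2}(x)$. In Case 2, $2k+2 > m$ (equivalently $k \geq \lceil (m-1)/2 \rceil$), the length-$m$ prefix is contained in the first arch and equals $U$ followed by the length-$(m-1)$ prefix of $u$. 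The initial $U$ is either the entire prefix (when $m=1$) or is followed by another $U$ (since $u$ begins with $U$ when nonempty), so in either subcase it contributes no $UD$ factor at the junction; the number of $UD$ factors in the length-$m$ prefix of $w$ therefore equals the number in the length-$(m-1)$ prefix of $u$. Summing freely over $v$ yields the factor $C_{n-k-1}$ and contribution $p_{k,\,m-1}(x) \cdot C_{n-k-1}$, with the boundary instance $k=0, m=1$ absorbed via $p_{0,0}(x) = C_0 = 1$.

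The main obstacle is routine but must be dispatched carefully: checking that no $UD$ factor is spuriously created or lost at the three junctions $U\,|\,u$, $u\,|\,D$, and $D\,|\,v$ for every admissible $k$, and verifying that the ranges $0 \leq k \leq \lfloor (m-2)/2 \rfloor$ and $\lceil (m-1)/2 \rceil \leq k \leq n-1$ together partition $\{0,1,\ldots,n-1\}$ for every $1 \leq m \leq 2n$ (which one checks separately for $m$ even and $m$ odd), so that Cases 1 and 2 exhaust all Dyck words of semilength $n$ exactly once.
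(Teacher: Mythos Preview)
Your proposal is correct and follows essentially the same approach as the paper: the paper also uses the first-return decomposition $w=Uu'Dv$ (your $u$ is the paper's $u'$), splits into the same two cases according to whether $2k+2\le m$ or $2k+2>m$, and identifies the contribution of the first arch with $N_k(x)$ (handling $k=0$ via the convention $N_0(x)=x$). Your write-up is somewhat more explicit than the paper's about why no $UD$ factor is created or lost at the junctions and about the boundary instance $k=0$, $m=1$, but the underlying argument is the same.
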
  
\begin{proof}
The stated initial conditions are direct: we do not count peaks when
$m=0$, and the number of all Dyck words of semilength $n$
is the Catalan number $C_{n}$. From now on we may assume that $m \geq 1$
holds and we consider a Dyck word $w$ of semilength $n \geq \half{m}$. Let
$k+1>0$ be the semilength of the shortest nonempty prefix $u$ in which
the number of letters $U$ equals the number of letters $D$. The word $u$
is then of the form $Uu'D$ where $u'$ is a Dyck word of semilength
$k$. Furthermore $w$ factors as $w=uv$ where $v$ is also a Dyck word.
We distinguish two cases.

\noindent 
Case 1: The inequality $2k+1\leq m-1$ holds.
Equivalently, we have
$k\leq \lfloor\half{(m-2)}\rfloor$. Note that this case does not occur
when $m=1$. In this case all peaks belonging to
the word $u'$ contribute a factor of $x$. As $u'$ varies over all Dyck
words of semilength $k$, their total contribution is the Narayana
polynomial $N_{k}(x)$. Note that the observation also holds when $k=0$ and
$u=UD$ hold, because we fixed the value of $N_{0}(x)$ to be $x$. The
second factor $v$ is an independently selected Dyck word of semilength
$n-k-1$ where we count the peaks among the first $m-2k-2$ letters. They
contribute a factor of $p_{n-k-1,m-2k-2}(x)$. 

\noindent 
Case 2: The inequality $2k+1\geq m$, that is,
$k\geq \lceil\half{(m-1)}\rceil$ holds. In this case all peaks contributing a
factor of $x$ belong to the factor $u'$. As $u'$ varies over all Dyck
words of semilength $k$, their total contribution is $p_{k,m-1}(x)$. 
The second factor $v$ is an independently selected Dyck word of semilength
$n-k-1$ whose peaks are not counted. They
contribute a factor of $C_{n-k-1}$.
\end{proof}
  
The generating function
$N(x,t)=\sum_{n \geq 0}N_{n}(x) t^{n}$ may be obtained from the well-known
generating function formula of the Narayana polynomials~\cite[Eq.~(9)]{Simion}
by adding $x-1$ to its constant term. We have
\begin{align}
N(x,t)
& =
\frac{1+t-xt-\sqrt{1-2(1+x)t+(1-x)^{2}t^{2}}}{2t}+x-1 
\nonumber \\
& =
\frac{1+xt-t-\sqrt{1-2(1+x)t+(1-x)^{2}t^{2}}}{2t}.
\end{align}  
Substituting $x=1$ into $N_{n}(x)$ yields the Catalan number $C_{n}$
and hence $N(1,t) = C(t)$.

We introduce the generating function $P(x,y,z)$
for the peak polynomials $p_{n,m}(x)$
\begin{align*}
P(x,y,z)
& =
\sum_{m \geq 0}\sum_{n \geq \lceil\half{m}\rceil} p_{n,m}(x) \cdot y^{n}z^{m} .
\end{align*}
Proposition~\ref{prop_peakrec} implies the following identity.
\begin{theorem}
The generating function $P(x,y,z)$ is given by
\begin{align*}
P(x,y,z)
& =
\frac{C(y)}{1 - yz^{2} \cdot N(x,yz^{2}) - yz \cdot C(y)}. 
\end{align*}
\label{theorem_peakgen} 
\end{theorem}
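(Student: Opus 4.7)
My plan is to convert the recurrence of Proposition~\ref{prop_peakrec} into a functional equation for $P(x,y,z)$ and then solve algebraically. First I multiply both sides of the recurrence by $y^{n} z^{m}$ and sum over $m \geq 1$ and $n \geq \lceil m/2 \rceil$. The missing $m = 0$ slice of $P(x,y,z)$ is supplied by the initial condition $p_{n,0}(x) = C_{n}$, which by~\eqref{equation_Catalan_generating_function} contributes $C(y)$; hence the summed left-hand side equals $P(x,y,z) - C(y)$.

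For the first double sum on the right, I apply the substitution $n' = n - k - 1$, $m' = m - 2k - 2$. Using the identity $\lceil (m' + 2k + 2)/2 \rceil = \lceil m'/2 \rceil + k + 1$, the original constraints $0 \leq k \leq \lfloor (m-2)/2 \rfloor$ and $n \geq \lceil m/2 \rceil$ decouple into the independent conditions $k \geq 0$, $m' \geq 0$, and $n' \geq \lceil m'/2 \rceil$. Since $y^{n} z^{m} = y^{k+1} z^{2k+2} \cdot y^{n'} z^{m'}$, this sum factors as
$$yz^{2} \cdot \sum_{k \geq 0} N_{k}(x) \cdot (yz^{2})^{k} \cdot P(x,y,z) = yz^{2} \cdot N(x, yz^{2}) \cdot P(x,y,z).$$

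For the second double sum, the substitution $m' = m - 1$, $n' = n - k - 1$ analogously converts $k \geq \lceil (m-1)/2 \rceil$ and $k \leq n-1$ into $k \geq \lceil m'/2 \rceil$ and $n' \geq 0$, while $n \geq \lceil m/2 \rceil$ becomes automatic. Splitting $y^{n} z^{m} = y^{k+1} z \cdot y^{n'} \cdot z^{m'}$ and recognizing the inner sum $\sum_{n' \geq 0} C_{n'} y^{n'} = C(y)$, this piece becomes $yz \cdot C(y) \cdot P(x,y,z)$. Combining yields
$$P(x,y,z) - C(y) = yz^{2} \cdot N(x, yz^{2}) \cdot P(x,y,z) + yz \cdot C(y) \cdot P(x,y,z),$$
and isolating $P(x,y,z)$ produces the claimed formula.

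The only delicate point is the bookkeeping for the two substitutions: I must verify that each valid triple $(k, m', n')$ on the right-hand side arises from a unique valid $(n, m, k)$ on the left, with no boundary indices lost or double-counted. This reduces to the two ceiling identities noted above; once those are checked, everything else is just recognizing $C(y)$ and $N(x, yz^{2})$ as geometric generating functions already in hand.
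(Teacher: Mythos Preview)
Your proof is correct and follows essentially the same approach as the paper: multiply the recurrence of Proposition~\ref{prop_peakrec} by $y^{n}z^{m}$, sum over $m\geq 1$ and $n\geq\lceil m/2\rceil$, supply the missing $m=0$ slice via $C(y)$, and recognize the two convolutions as $yz^{2}\,N(x,yz^{2})\,P(x,y,z)$ and $yz\,C(y)\,P(x,y,z)$. The only cosmetic difference is that the paper first reindexes the second sum by $k\mapsto n-1-k$ before summing, whereas you keep $k$ and instead substitute $n'=n-k-1$; the bookkeeping comes out the same either way.
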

\begin{proof}
Assume that  $m \geq 1$ and $n \geq \lceil\half{m}\rceil$ hold.
By replacing $k$ with $n-1-k$ in the second sum of the recurrence stated
in Proposition~\ref{prop_peakrec}, we have the following equivalent
recurrence: 
\begin{align*}
p_{n,m}(x)
& =
\sum_{k=0}^{\lfloor\half{(m-2)}\rfloor} N_{k}(x)\cdot p_{n-k-1,m-2k-2}(x)
+
\sum_{k=0}^{n-1-\lceil\half{(m-1)}\rceil} p_{n-k-1,m-1}(x)\cdot C_{k}. 
\end{align*}
Multiplying both sides with $y^{n}z^{m}$, we obtain
\begin{align*}
y^{n}z^{m} \cdot p_{n,m}(x)
& =
\sum_{k=0}^{\lfloor\half{(m-2)}\rfloor}
N_{k}(x) \cdot (yz^{2})^{k+1}\cdot y^{n-k-1}z^{m-2k-2} \cdot p_{n-k-1,m-2k-2}(x)\\
& +
\sum_{k=0}^{n-1-\lceil\half{(m-1)}\rceil}
C_{k} \cdot y^{k}\cdot yz\cdot y^{n-k-1}z^{m-1} \cdot p_{n-k-1,m-1}(x). 
\end{align*}
Summing over all pairs $(n,m)$ satisfying $m \geq 1$ and $n \geq
\lceil\half{m}\rceil$ we obtain all terms of $P(x,y,z)$ except for
those with $m=0$. The sum of these terms is $\sum_{n \geq 0} C_{n}
y^{n}=C(y)$. Hence we obtain 
\begin{align*}
P(x,y,z)
& =
C(y) + \left(yz^{2} \cdot N(x,yz^{2})+ yz \cdot C(y)\right) \cdot P(x,y,z). 
\end{align*}
Solving this equation for $P(x,y,z)$ yields the expression
stated in the theorem. 
\end{proof}

As a consequence of Theorem~\ref{theorem_peakgen}
and equation~\eqref{equation_gp} we may write
$G_{j}(x,t)$ as
\begin{align*}
G_{j}(x,t)
& =
\sum_{n \geq 0} p_{n-j,n}(x) \cdot t^{n}
=
\sum_{n \geq 0} [y^{n-j}z^{n}] P(x,y,z)\cdot t^{n}
=
\sum_{n \geq 0} [y^{n}z^{n}] y^{j} \cdot P(x,y,z)\cdot t^{n}.
\end{align*}
In particular, substituting $j=0$ yields
$G_{0}(x,t)=\sum_{n \geq 0}  [y^{n}z^{n}]  P(x,y,z)\cdot t^{n}$.
Here we use the notation~$[\cdot]$ to denote extracting the coefficient
from the expression that follows.

\section{The toric $g$-polynomial of the associahedron}
\label{section_associahedron}

We now turn our attention to the associahedron.
The $n$-dimensional associahedron $\Ass_{n}$
is a well-studied simple polytope
where the number of vertices is given by
the Catalan number $C_{n+1}$.
This polytope was first discovered by Tamari~\cite{Tamari} and
rediscovered by Stasheff~\cite{Stasheff}.
There are many ways to realize this polytope.
For a survey we refer to the article by
Ceballos, Santos and Ziegler~\cite{Ceballos_Santos_Ziegler}.

Postnikov, Reiner and Williams computed the 
$\gamma$-vector of the $n$-dimensional
associahedron~\cite[Proposition~11.14]{Postnikov_Reiner_Williams}.
\begin{proposition}[Postnikov--Reiner--Williams]
The $j$th entry of the $\gamma$-vector
of the $n$-dimensional associahedron $\Ass_{n}$ given by
\begin{align*}
\gamma_{j}(\Ass_{n}) & = C_{j} \cdot \binom{n}{2j} .
\end{align*}
\label{proposition_Postnikov--Reiner--Williams_associahedron}
\end{proposition}

Hence the generating function of the $\gamma$-polynomial
of the $n$-dimensional associahedron is
\begin{align*}
\sum_{n \geq 0} \gamma(\Ass_{n},x) \cdot t^{n}
& =
\sum_{n \geq 0} \sum_{j=0}^{\left\lfloor\half{n}\right\rfloor}
C_{j} \cdot \binom{n}{2j} \cdot x^{j} \cdot t^{n}
=
\sum_{j \geq 0} C_{j} \cdot x^{j} \cdot
\sum_{n \geq 2j} \binom{n}{2j} \cdot t^{n} \\
&=
\sum_{j \geq 0} C_{j} \cdot \frac{x^{j}t^{2j}}{(1-t)^{2j+1}}
=
\frac{1}{(1-t)} \cdot C\left(\frac{xt^{2}}{(1-t)^{2}}\right) ,
\end{align*}
where $C(u)$ is the generating function of the Catalan numbers.
Using expression~\eqref{equation_Catalan_generating_function},
the above equation simplifies to
\begin{align*}
\sum_{n \geq 0} \gamma(\Ass_{n},x) \cdot t^{n}
& =
\frac{1 - t - \sqrt{(1 - t)^{2} - 4xt^{2}}}{2xt^{2}}.  
\end{align*}

\begin{table}[t]
\begin{tabular}{|c|| r| r| r| r| r|}  
\hline
  \backslashbox{$n$}{$j$} & 0 & 1 & 2 & 3 & 4 \\
\hline  
  1& 1 &&&&\\
  2& 1 &       2 &&&\\
  3& 1 &     10 &&&\\
  4& 1 &     37 &       10 &&\\
  5& 1 &   126 &     105 &&\\
  6& 1 &   422 &     714 &       70 & \\ 
  7& 1 & 1422 &   4032 &   1176 & \\
  8& 1 & 4853 & 20628 & 11928 & 588 \\
\hline
\end{tabular}
\vspace*{1 mm}
\caption{The toric $g$-vector of the associahedron up to dimension $8$.} 
\label{table_g-polynomial_associahedron}  
\end{table}

The main result of this section
is that the
toric $g$-polynomial of the associahedron has a combinatorial
interpretation in terms of the ascent statistics of $123$-avoiding
parking functions.
As reported in~\cite{Adeniran_Pudwell}, the number of all $123$-avoiding
parking functions was first computed by Qiu and
Remmel~\cite{Qiu_Remmel}. We refine Qiu's
formulas~\cite{Qiu_slides} to the following result.

\begin{figure}[t]
\begin{center}
\begin{tikzpicture}[scale = 0.5]
\draw[step=1,gray,very thin] (0,0) grid (10,10);
\draw[thick,-] (0,0) -- (4,0) -- (4,2) -- (6,2) -- (6,5) -- (9,5) -- (9,7) -- (10,7) -- (10,10)
-- (9,10) -- (9,9) -- (6,9) -- (6,7) -- (5,7) -- (5,6) -- (4,6) -- (4,5) -- (3,5) -- (3,4) -- (2,4)
-- (2,2) -- (0,2) -- cycle;
\draw[dashed] (0,0) -- (10,10);
\node at (0.5,0.5) {7};
\node at (0.5,1.5) {10};
\node at (2.5,2.5) {5};
\node at (2.5,3.5) {9};
\node at (3.5,4.5) {8};
\node at (4.5,5.5) {2};
\node at (5.5,6.5) {6};
\node at (6.5,7.5) {1};
\node at (6.5,8.5) {4};
\node at (9.5,9.5) {3};
\node at (3.5,0.5) {7};
\node at (3.5,1.5) {10};
\node at (5.5,2.5) {5};
\node at (5.5,3.5) {9};
\node at (5.5,4.5) {8};
\node at (8.5,5.5) {2};
\node at (8.5,6.5) {6};
\node at (9.5,7.5) {1};
\node at (9.5,8.5) {4};
\node at (9.5,9.5) {3};
\draw (3.5,0.5) circle (0.45);
\draw (3.5,1.5) circle (0.45);
\draw (5.5,2.5) circle (0.45);
\draw (5.5,3.5) circle (0.45);
\draw (5.5,4.5) circle (0.45);
\draw (8.5,5.5) circle (0.45);
\draw (8.5,6.5) circle (0.45);
\draw (9.5,7.5) circle (0.45);
\draw (9.5,8.5) circle (0.45);
\draw (9.5,9.5) circle (0.45);

\node[rotate=90] at (-0.6,5) {Garsia--Haiman};
\node[rotate=-90] at (10.6,5) {Krattenthaler};
\end{tikzpicture}
\end{center}
\caption{The pair of Dyck paths representing the parking function
$(7,5,10,7,3,6,1,4,3,1)$.}
\label{figure_pair_of_paths}
\end{figure}
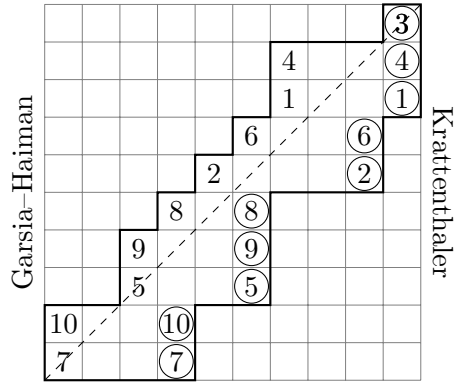

\begin{theorem}
\label{theorem_Margie}  
The coefficient of $x^{k}$ in the toric $g$-polynomial of the
$n$-dimensional associahedron~$\Ass_{n}$
is the number of $123$-avoiding parking
functions on the set $[n]$  having exactly $k$ ascents.  
\end{theorem}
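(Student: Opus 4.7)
The proof strategy is to combine the Garsia--Haiman bijection from Subsection~\ref{subsection_parking_functions} with Krattenthaler's bijection from Subsection~\ref{subsection_lattice_paths}. By Garsia--Haiman, each $123$-avoiding parking function $f:[n]\to[n]$ corresponds uniquely to a compatible pair $(\pi,v)$, in which $\pi\in\SSSS_n$ is $123$-avoiding (the pattern restriction on $f$ descends to $\pi$) and $v$ is a Dyck word of semilength $n$ (since $f$ is parking). A first key observation is that $\asc(f)=\asc(\pi^{-1})$: writing $\pi=\tau_1\tau_2\cdots\tau_n$ with each fiber $\tau_i$ listed in increasing order, the element $i$ appears before $i+1$ in the word $\pi(1)\cdots\pi(n)$ exactly when either $f(i)<f(i+1)$ (so $\tau_{f(i)}$ precedes $\tau_{f(i+1)}$) or $f(i)=f(i+1)$ (so $i$ precedes $i+1$ inside the same fiber), equivalently $f(i)\leq f(i+1)$.

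Applying Krattenthaler's bijection to the $123$-avoiding permutation $\pi$ produces a Dyck path $w_\pi$ of semilength $n$; by the discussion preceding Proposition~\ref{proposition_gamma_contribution}, the number of $UUD$ factors in $w_\pi$ equals $\asc(\pi^{-1})$, which we just identified with $\asc(f)$. Consequently, $\sum_f x^{\asc(f)}$ summed over $123$-avoiding parking functions on $[n]$ equals the sum of $x^{(\#\,UUD\text{-factors of }w_\pi)}$ over all compatible pairs $(\pi,v)$.

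To match this to $g_n(x)$, I would invoke Theorem~\ref{theorem_peaks} together with the Postnikov--Reiner--Williams formula $\gamma_{n,j}=C_j\binom{n}{2j}$ for the associahedron to get
\[
g_n(x) \;=\; \sum_{j\geq 0} C_j\binom{n}{2j}\,g_{n,j}(x).
\]
By Lemma~\ref{lemma_j_UU}(a), the factor $C_j\binom{n}{2j}$ counts $UUU$-avoiding Dyck paths of semilength $n$ with $j$ $UU$ factors, and by Proposition~\ref{proposition_gamma_contribution}(a) the coefficient $[x^k]g_{n,j}(x)$ counts $(\emptyset,B)$-compatible Dyck words of semilength $n$ with exactly $k$ $UUD$ factors, for any sparse $j$-subset $B$ of $[n-1]$. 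The core of the proof is thus to build a bijection between $123$-avoiding parking functions on $[n]$ with $k$ ascents and triples $(U,B,W)$, where $U$ is a $UUU$-avoiding Dyck path of semilength $n$ with $j$ $UU$ factors (for some $j$), $B$ is a canonical sparse $j$-subset of $[n-1]$ determined by $U$, and $W$ is a $(\emptyset,B)$-compatible Dyck word of semilength $n$ with exactly $k$ $UUD$ factors.

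The principal obstacle lies in constructing this bijection. Using $(w_\pi,v)$ as input, the natural idea is to encode the $UDD$-factor structure of $w_\pi$ (equivalently $\Asc(\pi)$) as the $UUU$-avoiding Dyck path $U$ together with the sparse subset $B$, and to repackage the Garsia--Haiman Dyck word $v$ into the $(\emptyset,B)$-compatible Dyck word $W$ whose $k$ $UUD$ factors match those of $w_\pi$. However, the compatibility condition $\Des(\pi)\subseteq\{q_1,q_1+q_2,\ldots,q_1+\cdots+q_{n-1}\}$ nontrivially couples the two Dyck paths, so careful case analysis is needed to verify that this assignment is well-defined and bijective while preserving the $UUD$-factor count. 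Once the bijection is in place, summing over $j$ yields $[x^k]g_n(x)=\sum_jC_j\binom{n}{2j}[x^k]g_{n,j}(x)$, completing the proof.
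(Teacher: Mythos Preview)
Your framework is exactly that of the paper: pass from a $123$-avoiding parking function $f$ to the Garsia--Haiman pair $(\pi,v)$, apply Krattenthaler's bijection to the $123$-avoiding $\pi$ to obtain a Dyck word $w_\pi$, observe $\asc(f)=\asc(\pi^{-1})=(\text{number of }UUD\text{ factors in }w_\pi)$, and then match against $g_n(x)=\sum_j C_j\binom{n}{2j}g_{n,j}(x)$ via Lemma~\ref{lemma_j_UU}(a) and Proposition~\ref{proposition_gamma_contribution}(a).

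Where you go astray is in the final paragraph: you have the roles of $v$ and $w_\pi$ swapped. You propose to build the $UUU$-avoiding path from $w_\pi$ and the $(\emptyset,B)$-compatible path from $v$, and you anticipate ``careful case analysis'' to make this work. In fact no repackaging is needed at all. The Garsia--Haiman Dyck word $v$ is \emph{already} $UUU$-avoiding (since $\pi$ is $123$-avoiding, no fiber of $f$ has three elements), so take $U=v$; let $B=\{i:U_iU_{i+1}\text{ is a factor of }v\}$, a sparse $j$-subset of $[n-1]$; and take $W=w_\pi$. The Garsia--Haiman compatibility condition $\Des(\pi)\subseteq\{q_1,q_1+q_2,\ldots\}$ says precisely that whenever $U_iU_{i+1}$ is a factor of $v$ we have $i\in\Asc(\pi)$, and under Krattenthaler's bijection $i\in\Asc(\pi)$ is equivalent to $UD_iD_{i+1}$ being a factor of $w_\pi$. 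Thus $w_\pi$ is $(\emptyset,B)$-compatible on the nose, and conversely any $(\emptyset,B)$-compatible $w$ decodes via Krattenthaler to a $123$-avoiding $\pi$ with $B\subseteq\Asc(\pi)$, hence $(\pi,v)$ is Garsia--Haiman compatible. The bijection $f\leftrightarrow(v,w_\pi)$ is therefore immediate, and summing over $v$ with $j$ factors $UU$ yields $C_j\binom{n}{2j}\cdot g_{n,j}(x)$ directly.
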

\begin{proof}
The key tools are the Garsia--Haiman and the Krattenthaler bijections
presented in Section~\ref{section_preliminaries}.
Observe that we have combined the two pictures
in Figures~\ref{figure_Krattenthaler}
and~\ref{figure_Garsia_Haiman}
into Figure~\ref{figure_pair_of_paths}.
We begin by characterizing $123$-avoiding parking functions in terms of their
Garsia--Haiman representation. 

Given a parking function $f : [n] \longrightarrow [n]$,
the Garsia--Haiman bijection yields the pair $(\pi,v)$.
It is almost an immediate consequence of the
construction and the definitions that the function $f$ is $123$-avoiding
if and only if the permutation $\pi$ avoids the pattern $123$:
there is no triple $1 \leq i_{1} < i_{2} < i_{3} \leq n$ satisfying
$\pi(i_{1}) < \pi(i_{2}) < \pi(i_{3})$.
Indeed, the existence of such a
triple $(i_{1},i_{2},i_{3})$ is equivalent to stating that the horizontal
coordinates of the labels $\pi(i_{1})$, $\pi(i_{2})$ and $\pi(i_{3})$
are weakly increasing which is the same as having
$f(\pi(i_{1})) \leq f(\pi(i_{2})) \leq f(\pi(i_{3}))$.
Before moving on to the discussion
of the other Dyck path in Figure~\ref{figure_pair_of_paths}, we note
that the permutation~$\pi$ being $123$-avoiding precludes the
presence of the factor $UUU$ in the Dyck word $v$.

Since the permutation $\pi$ is $123$-avoiding,
we apply the Krattenthaler bijection to obtain the
Dyck path below $y=x$ encoded by the Dyck word $w$.
Hence this gives the pair of Dyck paths $(v,w)$.

Not any $UUU$-avoiding Dyck path above the line $y=x$ can be paired with
any Dyck path below it due to the following requirement: whenever
$f(\pi(i))=f(\pi(i+1))$ holds, we must have
$\pi(i)<\pi(i+1)$. This follows because in the Garsia--Haiman encoding of a
parking function the labels with the same horizontal coordinate must
increase in the vertical order. After observing that $\pi(i)$ labels
the $i$th letter $U$ in the Dyck word $v$,
we obtain the following {\em compatibility criterion}: if
$U_{i}$ and $U_{i+1}$ are consecutive letters of the Dyck word $v$
then $D_{i}$ and $D_{i+1}$ must be
consecutive letters in the Dyck word~$w$
immediately preceded by a letter $U$. In other words, if
$U_{i}U_{i+1}$ is a factor in the Dyck word $v$, then $UD_{i}D_{i+1}$
must be a factor in the Dyck word $w$. Indeed, the index $i$ is an ascent of a
$123$-avoiding permutation $\pi$ if and only if $\pi(i)$ is a
left-to-right minimum of $\pi$ and $\pi(i+1)$ is not. Conversely, if
the compatibility criterion is satisfied, a permutation $\pi$ encoded by
the Dyck word $w$ provides a valid labeling for the
compatible Dyck word $v$, yielding the Garsia--Haiman
encoding of a $123$-avoiding parking function.

Fix $j$ as the number of factors $U_{i}U_{i+1}$ in
the Dyck word $v$.
By Lemma~\ref{lemma_j_UU} part~(a)
we know that the number of such
Dyck words is $C_{j} \cdot \binom{n}{2j}$.
This is also the entry $\gamma_{j}$ in the $\gamma$-vector of the
associahedron.
To complete the proof using
Proposition~\ref{proposition_gamma_contribution} we only need to make
the following observation: $i$ is an ascent of the parking function~$f$
if and only if it is an ascent of the {\em inverse} of the permutation~$\pi$.
Indeed $f(i)\leq f(i+1)$ is equivalent to stating that $i$
precedes $i+1$ in the permutation~$\pi$ which is equivalent to
$\pi^{-1}(i)<\pi^{-1}(i+1)$. In terms of the Dyck word $w$, the
label $i$ must precede the label $i+1$ on a $D$~step: this is only
possible if $i$ is a left-to-right minimum in~$\pi$ and the
left-to-right minimum immediately preceding $i$ is not
$i+1$. Equivalently, the $UD$ factor whose $D$ step is labeled $i$ is
immediately preceded by a letter~$U$. The converse is also true, hence
the number of ascents of $f$ is the same as the number of factors~$UUD$
in the Dyck path encoding~$\pi$.     

The statement now follows from
Proposition~\ref{proposition_gamma_contribution}. 
\end{proof}

We conclude this section by computing the leading coefficients of the
toric $g$-polynomials. 

\begin{proposition}
(a)
The leading coefficient of the toric $g$-polynomial of
the $2m$-dimensional associahedron is given by
\begin{align*}
[x^{m}] g(\Ass_{2m},x)
& =
C_{m} \cdot C_{m+1} .
\end{align*}
This enumerates
the number of lattice paths in the first quadrant $\Nnn^{2}$
starting and ending at the origin
having length $2m$ and taking the steps
$(\pm 1, 0)$ and $(0, \pm 1)$. \\
(b)
The leading coefficient of the toric $g$-polynomial of
the $(2m+1)$-dimensional associahedron is given by
\begin{align*}
[x^{m}] g(\Ass_{2m+1},x)
& =
C_{m+1} \cdot \binom{2m+3}{m} .
\end{align*}
This quantity is
the number of lattice paths in the half plane $\Nnn\times \Zzz$
starting and ending at the origin
having length $2m+2$ and taking the steps
$(\pm 1, 0)$ and $(0, \pm 1)$ in such a way that the lattice path
does not stay inside the first quadrant $\Nnn^{2}$.
\label{corollary_associahedron_2m_m}
\end{proposition}
\begin{proof}
To show part (a),
by Corollary~\ref{corollary_2m_m}
and
Proposition~\ref{proposition_Postnikov--Reiner--Williams_associahedron}
the leading coefficient is given by
\begin{align*}
[x^{m}] g(\Ass_{2m},x)
& =
\sum_{j=0}^{m} \binom{2m}{2j} \cdot C_{j} \cdot C_{m-j}.
\end{align*}
Observe that the $j$th term is the number of
lattice paths described in statement (a) taking
$2j$ horizontal steps and $2m-2j$ vertical steps. This expression is
equal to $C_{m} \cdot C_{m+1}$ by \cite[Eq.~(I)]{Cori_Dulucq_Viennot}.

For part (b), we similarly
obtain that the leading coefficient of the toric $g$-polynomial
of the $(2m+1)$-dimensional associahedron is
\begin{align*}
[x^{m}] g(\Ass_{2m+1},x)
& =
(m+1) \cdot \sum_{j=0}^{m} \binom{2m+1}{2j} \cdot
C_{j} \cdot C_{m+1-j} \\
& =
\sum_{j=0}^{m} \binom{2m+2}{2j} \cdot C_{j} \cdot
(m+1-j)\cdot C_{m+1-j} \\
& =
\sum_{j=0}^{m} \binom{2m+2}{2j} \cdot C_{j} \cdot
  \left(\binom{2m+2-2j}{m+1-j}-C_{m+1-j}\right),
\end{align*}
where we used
$a \cdot \binom{a-1}{b} = \binom{a}{b} \cdot (a-b)$
and
$k \cdot C_{k} = \binom{2k}{k-1} = \binom{2k}{k} - C_{k}$.
Observe that the $j$th term is the number of
lattice paths described in statement (b) taking
$2j$ horizontal steps and $2m+2-2j$ vertical
steps. 
The factor $C_{j}$ guarantees that the path
stays inside the half plane $\Nnn \times \Zzz$. However,
the next factor $\binom{2m+2-2j}{m+1-j}-C_{m+1-j}$
shows that the path leaves the quadrant $\Nnn^{2}$.
By~\cite[Eq.~(1.4)]{Guy_Krattenthaler_Sagan} the number of all
lattice paths in $\Nnn \times \Zzz$ of length $2m+2$ from $(0,0)$ to $(0,0)$  
equals
\begin{align}
\binom{2m+2}{m+1}^{2} - \binom{2m+2}{m}^{2}
& =
\left( \binom{2m+2}{m+1} - \binom{2m+2}{m} \right)
\cdot
\left( \binom{2m+2}{m+1} + \binom{2m+2}{m} \right)
\nonumber \\
& =
C_{m+1} \cdot \binom{2m+3}{m+1} .
\label{equation_N_Z}
\end{align}
From this expression we need to subtract 
the number of lattice paths
that stay inside $\Nnn\times \Nnn$,
that is, $C_{m+1} \cdot C_{m+2}$ by part (a).
It is straightforward to see that the
difference is $C_{m+1} \cdot \binom{2m+3}{m}$. 
\end{proof}

\section{The toric $g$-polynomial of the cyclohedron}
\label{section_cyclohedron}

\begin{table}[t]
\begin{tabular}{|c|| r| r| r| r| r|}  
\hline
  \backslashbox{$n$}{$j$} & 0 & 1 & 2 & 3 & 4 \\
\hline  
  1& 1 &&&&\\
  2& 1 &        3 &&&\\
  3& 1 &      16 &&&\\
  4& 1 &      65 &         20 &&\\
  5& 1 &    246 &       225 &&\\
  6& 1 &    917 &     1659 &      175 & \\ 
  7& 1 &  3424 &   10192 &    3136 & \\
  8& 1 & 12861 &  56664 &  34104 & 1764 \\
\hline
\end{tabular}
\vspace*{1 mm}
\caption{The toric $g$-vector of the cyclohedron up to dimension $8$.} 
\label{table_g-polynomial_cyclohedron}  
\end{table}

The $n$-dimensional cyclohedron~$\Cyc_{n}$
is a type $B$ version of the associahedron.
The cyclohedron was constructed as a combinatorial object by
Bott and Taubes~\cite{Bott_Taubes},
while
polytopal realizations were given by
Markl~\cite{Markl},
Simion~\cite{Simion_type_B}
and
the authors~\cite{Ehrenborg_Hetyei_Readdy_pulling}.
Postnikov, Reiner and Williams computed the $\gamma$-vector
of the $n$-dimensional
cyclohedron~\cite[Proposition~11.15]{Postnikov_Reiner_Williams}.

In this section we give a combinatorial interpretation
of the toric $g$-vector of the cyclohedron
in terms of $123$-avoiding {\em functions}.
For the cyclohedron
the computation has a similar flavor
as that of the associahedron.

\begin{proposition}[Postnikov--Reiner--Williams]
The $j$th entry of the $\gamma$-vector
of the $n$-dimensional cyclohedron~$\Cyc_{n}$ given by
\begin{align*}
\gamma_{j}(\Cyc_{n}) & = \binom{2j}{j} \cdot \binom{n}{2j} .
\end{align*}
\label{proposition_Postnikov--Reiner--Williams_cyclohedron}
\end{proposition}

\begin{lemma}
The generating function 
for the $\gamma$-polynomials of the cyclohedra is given by
\begin{align*}
\sum_{n \geq 0} \gamma(\Cyc_{n},x) \cdot t^{n}
&=
\frac{1}{\sqrt{(1-t)^{2}-4xt^{2}}}.
\end{align*}
\end{lemma}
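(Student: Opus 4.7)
The plan is to mirror the computation performed for the associahedron earlier in Section~\ref{section_associahedron}. Starting from the formula $\gamma_{n,j} = \binom{2j}{j}\binom{n}{2j}$, I would substitute into the defining sum and swap the order of summation so that $j$ becomes the outer index:
\[
\sum_{n \geq 0} \gamma_{n}(x) \cdot t^{n}
= \sum_{j \geq 0} \binom{2j}{j} \cdot x^{j} \cdot \sum_{n \geq 2j} \binom{n}{2j} \cdot t^{n}.
\]
The inner sum is the standard binomial-series identity $\sum_{n \geq 2j} \binom{n}{2j} t^{n} = t^{2j}/(1-t)^{2j+1}$, which is immediate from differentiating the geometric series.

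After pulling out the factor $1/(1-t)$, what remains is
\[
\frac{1}{1-t} \cdot \sum_{j \geq 0} \binom{2j}{j} \cdot \left(\frac{xt^{2}}{(1-t)^{2}}\right)^{j}.
\]
At this point I would invoke the classical generating function $\sum_{j \geq 0} \binom{2j}{j} u^{j} = 1/\sqrt{1-4u}$ with $u = xt^{2}/(1-t)^{2}$, and then absorb the prefactor $1/(1-t)$ into the radical via
\[
\frac{1}{1-t} \cdot \frac{1}{\sqrt{1 - 4xt^{2}/(1-t)^{2}}}
= \frac{1}{\sqrt{(1-t)^{2} - 4xt^{2}}},
\]
where the radical is interpreted as the unique formal power series with constant term $1$.

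There is essentially no obstacle here: each step is a routine manipulation of generating functions, structurally parallel to the associahedron derivation. The only noteworthy difference is that the central binomial generating function $1/\sqrt{1-4u}$ replaces the Catalan generating function $C(u)$, reflecting the swap of $C_{j}$ for $\binom{2j}{j}$ in the $\gamma$-vector entries of the cyclohedron.
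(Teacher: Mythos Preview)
Your proposal is correct and follows essentially the same approach as the paper: swap the order of summation, evaluate the inner sum as $t^{2j}/(1-t)^{2j+1}$, apply the central binomial generating function $\sum_{j\geq 0}\binom{2j}{j}u^{j}=1/\sqrt{1-4u}$ with $u=xt^{2}/(1-t)^{2}$, and simplify. The paper's proof is precisely this chain of equalities.
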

\begin{proof}
The computation is as follows:
\begin{align*}
\sum_{n \geq 0} \gamma(\Cyc_{n},x) \cdot t^{n}
&=
\sum_{n \geq 0}
\sum_{j=0}^{\left\lfloor\half{n}\right\rfloor}
\binom{2j}{j} \cdot \binom{n}{2j} \cdot x^{j} \cdot t^{n}
=
\sum_{j \geq 0} \binom{2j}{j} \cdot x^{j} \cdot
\sum_{n \geq 2j} \binom{n}{2j} \cdot t^{n} \\
& =
\sum_{j \geq 0}
\binom{2j}{j} \cdot \frac{x^{j}t^{2j}}{(1-t)^{2j+1}}
=
\frac{1}{(1-t) \cdot \sqrt{1-4\frac{xt^{2}}{(1-t)^{2}}}} \\
& =
\frac{1}{\sqrt{(1-t)^{2}-4xt^{2}}}.
\qedhere
\end{align*}
\end{proof}
We have the
following combinatorial interpretation for the toric
$g$-vector of the cyclohedron.
\begin{theorem}
\label{theorem_Margie2}  
The coefficient of $x^{k}$ in the toric $g$-polynomial of the
$n$-dimensional cyclohedron~$\Cyc_{n}$
is the number of $123$-avoiding functions
$f:[n]\longrightarrow [n]$ having exactly $k$ ascents.  
\end{theorem}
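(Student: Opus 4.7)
The plan is to parallel the proof of Theorem~\ref{theorem_Margie}, replacing $123$-avoiding parking functions with arbitrary $123$-avoiding functions and using Lemma~\ref{lemma_j_UU}(b) in place of~(a). The $\gamma$-vector entry $\binom{2j}{j}\binom{n}{2j}$ of the cyclohedron is precisely the count produced by part~(b), so that the rest of the toric $g$-machinery from Theorem~\ref{theorem_peaks} applies unchanged once the bijective scaffolding is adjusted.

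First I would apply the Garsia--Haiman bijection to a $123$-avoiding function $f : [n] \longrightarrow [n]$, producing a compatible pair $(\pi, v)$. Exactly as in the proof of Theorem~\ref{theorem_Margie}, $f$ is $123$-avoiding if and only if the permutation $\pi$ is $123$-avoiding; the argument only uses that the horizontal coordinates of the labels $\pi(i_1),\pi(i_2),\pi(i_3)$ record the values $f(\pi(i_j))$, and never invokes the parking property. Consequently $v = U^{q_1}DU^{q_2}D\cdots U^{q_n}D$ is a balanced word that avoids $UUU$ (an occurrence would force three ascending values in $\pi$) and ends in~$D$ by construction, but is no longer required to be a Dyck word.

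Next I would apply Krattenthaler's bijection to $\pi$ to produce a Dyck word $w$ and carry over the compatibility criterion verbatim: every factor $U_iU_{i+1}$ of $v$ forces a factor $UD_iD_{i+1}$ of $w$, and conversely any such compatible pair $(v,w)$, with $v$ balanced, $UUU$-avoiding, ending in~$D$, and $w$ the Krattenthaler encoding of a $123$-avoiding permutation, lifts to the Garsia--Haiman encoding of a $123$-avoiding function. The ascent correspondence also survives unchanged: $f(i) \leq f(i+1)$ is equivalent to $i$ preceding $i+1$ in $\pi$, equivalently to the $D$-step labeled $i$ in $w$ sitting inside a $UUD$ factor, so the number of ascents of $f$ equals the number of $UUD$ factors of~$w$. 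None of this argument used the parking hypothesis.

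Finally I would count. Fix $j$ as the number of $UU$ factors of $v$. By Lemma~\ref{lemma_j_UU}(b) the number of $UUU$-avoiding balanced words of length $2n$ ending in~$D$ with exactly $j$ such factors is $\binom{n}{2j}\binom{2j}{j} = \gamma_{n,j}$. For each such $v$, the positions at which $U_iU_{i+1}$ occurs form a sparse subset $B \subseteq [n-1]$ of cardinality~$j$, and Proposition~\ref{proposition_gamma_contribution}(a) states that the number of $(\emptyset,B)$-compatible Dyck words~$w$ having $k$ factors $UUD$ equals the coefficient of $x^k$ in $g_{n,j}(x)$. Summing over $j$ and invoking Theorem~\ref{theorem_peaks} yields
\[
\sum_{j \geq 0} \gamma_{n,j}\cdot [x^k]\,g_{n,j}(x) \;=\; [x^k]\,g(P_n,x),
\]
where $P_n$ denotes the $n$-dimensional cyclohedron. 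The only mildly delicate point is that the tally supplied by Proposition~\ref{proposition_gamma_contribution}(a) depends solely on $|B|$ rather than on the particular sparse set $B$, but this independence is already built into the statement of the proposition, so no new difficulty arises.
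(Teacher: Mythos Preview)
Your proof is correct and follows essentially the same approach as the paper: the paper's own proof is a three-sentence sketch pointing back to Theorem~\ref{theorem_Margie} and noting that the only change is that the Garsia--Haiman lattice path is now merely a balanced word (counted by Lemma~\ref{lemma_j_UU}(b)) rather than a Dyck word. You have simply unfolded that sketch in full detail, with the same bijections, the same compatibility criterion, and the same appeal to Proposition~\ref{proposition_gamma_contribution} and Theorem~\ref{theorem_peaks}.
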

\begin{proof}
The proof idea is analogous to the argument for
Theorem~\ref{theorem_Margie}. The only difference is that the
Garsia--Haiman lattice
path does not need to be a Dyck path.
Instead it is a lattice path described in Lemma~\ref{lemma_j_UU} part~(b).
This lemma states that the number of such paths
is $\binom{n}{j} \cdot \binom{2j}{j}$,
which is the $\gamma$-vector of the cyclohedron.
\end{proof}

We conclude this section by determining the leading coefficients of the
toric $g$-polynomials. 
\begin{proposition}
(a)
The leading  coefficient of the toric $g$-polynomial of
the $2m$-dimensional cyclohedron is 
given by
\begin{align*}
[x^{m}] g(\Cyc_{2m},x)
& = 
C_{m} \cdot \binom{2m+1}{m} .
\end{align*}
This quantity enumerates 
the number of lattice paths in the half plane $\Zzz \times \Nnn$
starting and ending at the origin
having length $2m$ and taking the steps
$(\pm 1, 0)$ and $(0, \pm 1)$. \\
(b)
The leading  coefficient of the toric $g$-polynomial of
the $(2m+1)$-dimensional cyclohedron is 
given by
\begin{align*}
[x^{m}] g(\Cyc_{2m+1},x)
& = 
\binom{2m+2}{m}^{2} .
\end{align*}
This quantity is the number of
lattice paths in the plane $\Zzz \times \Zzz$
starting and ending at the origin
having length $2m+2$ and taking the steps
$(\pm 1, 0)$ and $(0, \pm 1)$ such that the lattice path does not stay
inside the half plane $\Zzz \times \Nnn$. 
\label{corollary_cyclohedron_2m_m}
\end{proposition}
\begin{proof}
To prove part~(a) Corollary~\ref{corollary_2m_m}
and
Proposition~\ref{proposition_Postnikov--Reiner--Williams_cyclohedron}
implies that the leading coefficient 
is given by
\begin{align*}
[x^{m}] g(\Cyc_{2m},x)
& =
\sum_{j=0}^{m} \binom{2m}{2j} \cdot \binom{2j}{j} \cdot C_{m-j} .
\end{align*}
Observe that the $j$th term is the number of
lattice paths described in statement (a) taking
$2j$ horizontal steps.
The expression $C_{m} \cdot \binom{2m+1}{m}$ follows
from equation~\eqref{equation_N_Z}.

Similarly for part~(b) we have
\begin{align*}
[x^{m}] g(\Cyc_{2m+1},x)
& =  
(m+1) \cdot 
\sum_{j=0}^{m} \binom{2m+1}{2j} \cdot \binom{2j}{j} \cdot C_{m+1-j}\\
&=\sum_{j=0}^{m}\binom{2m+2}{2j} \cdot \binom{2j}{j}\cdot (m+1-j) \cdot
C_{m+1-j}\\ 
&=\sum_{j=0}^{m+1}\binom{2m+2}{2j} \cdot \binom{2j}{j}\cdot
\left(\binom{2m+2-2j}{m+1-j}-C_{m+1-j}\right) .
\end{align*}
Observe that the $j$th term is the number of
lattice paths described in statement (b) taking
$2j$ horizontal steps. By \cite[Eq.~(1.2)]{Guy_Krattenthaler_Sagan}, the
number of all lattice paths of length $2m+2$ from $(0,0)$ to $(0,0)$ is
$\binom{2m+2}{m+1}^{2}$. From these we need to subtract the 
number of lattice paths which stay inside the half plane $\Zzz \times \Nnn$.
By part~(a) and equation~\eqref{equation_N_Z} this number is
\begin{align*}
C_{m+1} \cdot \binom{2m+3}{m+1}
& =
\left(\binom{2m+2}{m+1}-\binom{2m+2}{m}\right)\cdot
\left(\binom{2m+2}{m+1}+\binom{2m+2}{m}\right). 
\end{align*}
The difference is the square $\binom{2m+2}{m}^{2}$. 
\end{proof}

\section{Parking trees}
\label{section_parking_trees}

In this section we introduce {\em parking
trees} and characterize those that represent $123$-avoiding parking
functions. Our presentation of a parking tree is based upon an exercise due to
Sagan~\cite[Ch~1.\ Exercise~(32)(c)]{Sagan} and is a slight
generalization of the constructions described by Yan~\cite[Section
  13.2.3]{Yan_chapter}. The first bijection between labeled trees and
parking functions is due to Kreweras~\cite{Kreweras_suites},
whereas Knuth found a simpler construction~\cite[Section~6.4, Exercise
  31, solution  (c)]{Knuth}; see also~\cite[Section~13.2.2]{Yan_chapter}. These
parking trees are {\em different} from ours; see
Section~\ref{section_Concluding_remarks} for further details.

\begin{definition}
A {\em parking tree} is a bilabeled rooted plane tree on $n+1$ vertices
whose vertices, respectively edges, are bijectively labeled with the
elements of the set $[n+1]$, respectively $[n]$, subject to the
following conditions: 
\begin{enumerate}
\item The labeling on the vertices is {\em increasing}: the label of
  each vertex is less than the label of any of its children.
\item If $i$ and $j$ satisfying  $i<j$ are labels of edges connecting
  the same parent vertex to different children
then the edge labeled $i$ lies to the {\em left} of the edge
labeled $j$. 
\end{enumerate}  
\end{definition}

Using Exercise~1.63 in~\cite{Stanley_EC1}
we note that there are $n!^{2}$ parking trees
on $n+1$ nodes.

Each parking tree may be used to encode a parking function using the
fact that every edge in a rooted plane tree connects a parent vertex to
its child. 

\begin{definition}
\label{definition_parking}  
Given a parking tree on $n+1$ vertices, associate a parking function
$f:[n] \longrightarrow [n]$ to it as follows.  For each $i \in [n]$ set $f(i)$ to
be the label of the parent vertex incident to the edge labeled $i$. 
\end{definition}

\begin{figure}[t]
\begin{center}
\tikzstyle{place}=[circle,draw=black,thick,
                   inner sep=0pt,minimum size=4.5mm]
\begin{tikzpicture}[scale = 0.8]
\node (one) at (0,6) [place] {\small ${1}$};
\node (iseven) at (-0.7,5.7) {\small $7$};
\node (iten) at (0.7,5.7) {\small $10$};
\node (two) at (-1,5) [place] {\small ${2}$};
\node (three) at (1,5) [place] {\small ${3}$};
\node (ifive) at (0.3,4.7) {\small $5$};
\node (inine) at (1.7,4.7) {\small $9$};
\node (four) at (0,4) [place] {\small ${4}$};
\node (ieight) at (0.6,3.7) {\small $8$};
\node (ten) at (2,4) [place] {\small ${10}$};
\node (ithree) at (2.7,3.7) {\small $3$};
\node (five) at (0.8,3) [place] {\small ${5}$};
\node (itwo) at (1.4,2.7) {\small $2$};
\node (six) at (1.6,2) [place] {\small ${6}$};
\node (isix) at (2.2,1.7) {\small $6$};
\node (eleven) at (3,3) [place] {\small ${11}$};
\node (eight) at (1.4,0) [place] {\small ${8}$};
\node (nine) at (3.2,0) [place] {\small ${9}$};
\node (seven) at (2.4,1) [place] {\small ${7}$};
\node (ione) at (1.7,0.7) {\small $1$};
\node (ifour) at (3,0.7) {\small $4$};

\draw[-,thick] (one) -- (two);
\draw[-,thick] (one) -- (three);
\draw[-,thick] (three) -- (four);
\draw[-,thick] (three) -- (ten);
\draw[-,thick] (four) -- (five) -- (six) -- (seven) -- (nine);
\draw[-,thick] (ten) -- (eleven);
\draw[-,thick] (seven) -- (eight);
\end{tikzpicture}
\hspace*{10 mm}
\begin{tikzpicture}[scale = 0.8]
\node (one) at (0,6) [place] {\small ${1}$};
\node (iseven) at (-0.7,5.7) {\small $7$};
\node (iten) at (0.7,5.7) {\small $10$};
\node (two) at (-1,5) [place] {\small ${2}$};
\node (three) at (1,5) [place] {\small ${3}$};
\node (ifive) at (0.3,4.7) {\small $5$};
\node (inine) at (1.7,4.7) {\small $9$};
\node (four) at (0,4) [place] {\small ${4}$};
\node (ieight) at (0.6,3.7) {\small $8$};
\node (five) at (2,4) [place] {\small ${5}$};
\node (itwo) at (2.7,3.7) {\small $2$};
\node (six) at (0.8,3) [place] {\small ${6}$};
\node (isix) at (1.4,2.7) {\small $6$};
\node (seven) at (3,3) [place] {\small ${7}$};
\node (ione) at (2.6,2.7) {\small $1$};
\node (ifour) at (3.7,2.7) {\small $4$};
\node (eight) at (1.6,2) [place] {\small ${8}$};
\node (nine) at (2.5,2) [place] {\small ${9}$};
\node (ten) at (4,2) [place] {\small ${10}$};
\node (ithree) at (4.7,1.7) {\small $3$};
\node (eleven) at (5,1) [place] {\small ${11}$};

\draw[-,thick] (one) -- (three) -- (five) -- (seven) -- (ten) -- (eleven);
\draw[-,thick] (one) -- (two);
\draw[-,thick] (three) -- (four) -- (six) -- (eight);
\draw[-,thick] (seven) -- (nine);
\draw[-,white] (0,-0.3) -- (1,-0.3); 
\end{tikzpicture}
\end{center}
\caption{The depth-first search and breadth-first search
parking trees of the same parking function
$(7,5,10,7,3,6,1,4,3,1)$.}
\label{figure_parking_tree}
\end{figure}
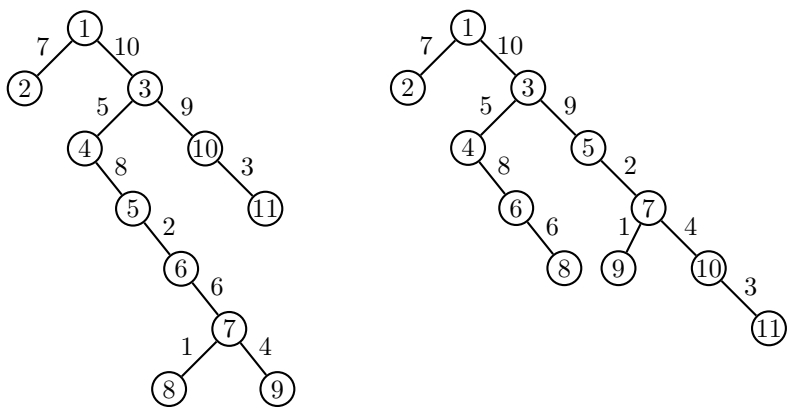

\begin{definition}
\label{definition_DFS_BFS}
Define the {\em depth-first search},
respectively {\em breadth-first search labeling},
of the vertices of a rooted plane tree as follows.
\begin{enumerate}
\item
The root of the tree is labeled $1$.
\item
Assume we have assigned the labels $1$ through $j$.
\begin{itemize}
\item[--]
For depth-first search, let $v$ be the vertex with the largest
label that has at least one unlabeled child.
\item[--]
For breadth-first search, let $v$ be the vertex with the smallest
label that has at least one unlabeled child.
\end{itemize}
Label the leftmost unlabeled child of $v$ by $j+1$.
\end{enumerate}
\end{definition}

A pair of parking trees encoding the same parking function is shown in
Figure~\ref{figure_parking_tree}. 
The tree on the left has the depth-first search labeling of the nodes,
whereas
the tree on the right has the breadth-first search labeling.

\begin{remark}
\label{remark_Yan}
{\em
Textbooks usually define the depth-first search and breadth-search
process for a connected graph in general, and use it to build a spanning tree
rooted at the starting point of the process. In such a
description of the breadth-first search process, the notion of {\em
level} is important: the starting vertex of the process has level $0$,
its neighbors have level $1$, and so on, all vertices that are added as
neighbors of a vertex of level $i$ have level $i+1$. We will not use
this notion of level, but it is worth noting that the breadth-first labeling process
we describe in Definition~\ref{definition_DFS_BFS} labels the vertices of a
plane tree level by level, where the level of a vertex is simply its
distance from the root. Finally, it should be emphasized that our
processes number the children of each vertex in the left-to-right order and
the resulting labeled tree can be drawn in exactly one way in the plane
such that this condition is satisfied. This aspect of our process is not
part of the general definition of a depth-first search or a
breadth-first search process as the graph considered may not even be
planar.
}           
\end{remark}

The trees shown in Figure~\ref{figure_parking_tree} are 
parking trees of the parking function whose Garsia--Haiman representation
is shown in Figure~\ref{figure_Garsia_Haiman}. If we read off the labels
on the parking tree in the breadth-first search order, we
obtain the same permutation $\pi=(7,10,5,9,8,2,6,1,4,3)$ that is
represented by the second Dyck path in
Figure~\ref{figure_Garsia_Haiman}.
\begin{definition}
Consider a parking tree on $n+1$ vertices. Define its {\em edge labeling
  permutation}~$\pi$ as the list of labels on the edges in the following
order:
\begin{enumerate}
\item For $i<j$ the edges connecting the vertex labeled $i$ to
  its children precede the labels of the edges connecting the vertex labeled $j$ to
  its children.
\item The labels of the edges connecting the same vertex to its children
  are listed in left-to-right (that is, increasing) order.
\end{enumerate}  
\end{definition}  

It is direct from the definitions that the edge labeling permutation of
a parking tree is identical to the permutation associated to the encoded
parking function. We only need to show that
Definition~\ref{definition_parking} always defines a parking function.  

This fact is part of the main result in this section: 
\begin{theorem}
\label{theorem_parkingtrees}  
Each parking tree on $n+1$ vertices encodes a parking function on $[n]$. 
Conversely, every parking function may be represented by a parking tree
in at least one way. Furthermore, the correspondence between parking
functions of $[n]$ and those parking trees on $n+1$ vertices that 
are labeled in the depth first (breadth-first) search order, is a bijection. 
\end{theorem}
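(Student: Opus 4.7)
The plan is to prove the three parts in sequence: that every parking tree encodes a parking function, that every parking function is represented by at least one parking tree, and that restricting attention to the depth-first (respectively breadth-first) labeling yields a bijection.

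For the first part, I would fix $k \in [n]$ and consider the set $V_{k+1}$ of vertices whose label lies in $[k+1]$. Because the vertex labeling is increasing along each parent-child edge, the parent of any vertex of $V_{k+1}$ has a strictly smaller label and hence also belongs to $V_{k+1}$; so $V_{k+1}$ induces a rooted subtree containing the root, with $k+1$ vertices and $k$ edges. Every such edge has parent label in $[k]$, so each of these $k$ edges lies in $f^{-1}([k])$, giving $|f^{-1}([k])| \geq k$, which is exactly the parking condition.

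For the remaining two claims the approach is to run an explicit BFS (or DFS) construction on $f$ and show it terminates with a valid parking tree precisely when $f$ is a parking function. I will describe the BFS version; the DFS case is formally identical with a stack in place of a queue, or equivalently with the rule ``extend at the labeled vertex with the largest label still possessing an unlabeled child.'' Start with the root labeled $1$ in a queue; iteratively, dequeue the vertex labeled $i$, attach a child for each element of $f^{-1}(i)$ placed left to right in the natural order of the edge labels, assign these children the next consecutive vertex labels in the same order, and enqueue them. The output automatically satisfies the two defining properties of a parking tree, and by construction its vertex labeling agrees with the BFS order.

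The main obstacle is verifying that the algorithm never stalls, and this is where the parking condition enters decisively. Immediately before dequeueing the vertex labeled $i$, exactly $1 + |f^{-1}([i-1])|$ vertices have been created, so the construction succeeds for every $i \in [n+1]$ if and only if $|f^{-1}([k])| \geq k$ for all $k \in [n]$. After that, I would verify bijectivity by noting that the algorithm is deterministic and that its output, read off by the rule $f(i) = $ label of the parent incident to edge $i$, recovers the input $f$; an induction on the vertex label then shows that starting from any BFS-labeled parking tree, extracting $f$ and re-running the construction reproduces the tree, since both the left-to-right order of the edges at each vertex and the BFS labeling rule are fully determined by $f$. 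The DFS case proceeds by the same template, with the identical counting inequality controlling termination and the DFS rule's determinism again yielding the bijection.
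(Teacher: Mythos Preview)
Your argument for the first part (that a parking tree encodes a parking function) is essentially the paper's argument: both use that the vertices labeled $1,\ldots,k+1$ form a rooted subtree whose $k$ edges all lie in $f^{-1}([k])$.

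For the converse and the bijectivity, you take a genuinely different route. The paper reduces to known machinery: it passes through the Garsia--Haiman pair $(\pi,v)$, observes that the Dyck word $v=U^{q_1}D\cdots U^{q_n}D$ is equivalent to the \L ukasziewicz word $f_{q_1}\cdots f_{q_n}f_0$, and then invokes the classical bijection between \L ukasziewicz words and rooted plane trees whose vertex labeled $i$ in DFS (respectively BFS) order has $q_i$ children, citing Flajolet--Sedgewick for DFS and relegating the BFS case to an appendix. Your approach is more self-contained: you run an explicit BFS (or DFS) construction and show directly that it stalls precisely when the parking inequality $|f^{-1}([k])|\geq k$ fails. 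This buys you an elementary proof with no external citations; the paper's approach buys brevity by pointing to a standard result.

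One caution on the DFS variant: the phrase ``formally identical with a stack in place of a queue'' is not literally correct under your attach-all-children-at-once protocol. If you pop vertex $i$ and immediately create and label \emph{all} of its children with consecutive integers, siblings receive consecutive labels, which is wrong for DFS preorder (the second child of the root must wait until the first child's entire subtree is labeled). Your alternative description---``extend at the labeled vertex with the largest label still possessing an unlabeled child,'' attaching one child at a time---is the correct one, and your termination argument then goes through: after $m$ vertices are present, the total child capacity is $|f^{-1}([m])|$ and the number of edges is $m-1$, so some vertex has spare capacity exactly when $|f^{-1}([m])|\geq m$. Just be sure to run the DFS case with that one-child-at-a-time rule rather than the literal stack swap.
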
  
\begin{proof}
By definition, each parking tree on $n+1$ vertices encodes a function
$f:[n] \longrightarrow [n]$. (Note that $n+1$ is not in the range of $f$:
it labels a vertex with no children due to the fact that the labeling of
the vertices is increasing.) For each $i\leq n$, the set $f^{-1}([i])$
is the set of edge labels whose parent vertex is labeled with an element
of $[i]$. The corresponding edges form the edge set of a tree rooted at
the vertex labeled $1$: this is a consequence of the fact that the
labeling of the vertices is increasing. Not all vertices of this tree
are labeled with elements of the set $[i]$ because the vertex labeled
$i+1$ is the child vertex of an edge that belongs to this tree. Hence
the tree has at least $i+1$ vertices and at least $i$ edges. Therefore
$|f^{-1}([i])|\geq i$ holds for each $i\leq n$.

To prove the converse it suffices to show that there is a unique parking
tree whose vertices are labeled in the depth-first (breadth-first)
search order that encodes a given parking function. We prove the
statement for the depth-first search labeling. The breadth-first search
variant is completely analogous and left to the reader. Consider a
parking function $f:[n] \longrightarrow [n]$ corresponding to the pair
$(\pi,v)$ via the Garsia--Haiman bijection, described in
Subsection~\ref{subsection_parking_functions}. Here the associated
permutation $\pi$ is the ordered
list of the elements of $f^{-1}(1),f^{-1}(2),\ldots,f^{-1}(n)$, listed
in this order such that the elements belonging to the same fiber are
listed in increasing order, and $v$ is the Dyck word
$v=U^{q_{1}} D U^{q_{2}} D \cdots U^{q_{n}} D$ in which
$q_{i}=|f^{-1}(i)|$ for $i=1,2,\ldots,n$. A parking tree encodes this
parking function if and only if for each $i\in [n]$ the vertex labeled
$i$ satisfies the following two conditions:
\begin{enumerate}
\item The number of children of the vertex is  $q_{i}=|f^{-1}(i)|$.
\item The edge labeling permutation $\pi$ of the parking tree is the same
  as the permutation $\pi$ associated to $f$ by the Garsia--Haiman
  bijection.  
\end{enumerate}
Clearly, if a rooted plane tree whose vertices are labeled in an increasing
order satisfies Condition~(1) then there is only one way to
label the edges in such a way that the labeling satisfies
Condition~(2). Keeping in mind that balanced words $U^{q_{1}} D
U^{q_{2}} D \cdots U^{q_{n}} D$ of length $2n$ correspond bijectively to   
\L ukasziewicz words $f_{q_{1}}f_{q_{2}}\cdots f_{q_{n}} f_0$ of length
$n+1$, the theorem is a consequence of the well-known result stating
that for each \L ukasziewicz word $f_{q_{1}}f_{q_{2}}\cdots f_{q_{n}}
f_0$ of length $n+1$ there is a unique rooted plane tree whose vertex
labeled $i$ in the depth-first search order has $q_{i}$ children for
$i=1,2,\ldots,n$. A proof of this statement may be
found~\cite[Section~I.5.3]{Flajolet}. The analogous statement for the
breadth-first search order may be shown in a similar fashion,
see Proposition~\ref{proposition_BFS} in the Appendix.      
\end{proof}
  
Next we characterize the parking trees of $123$-avoiding parking
functions. 

\begin{proposition}
  A parking tree on $n+1$ vertices represents
  a $123$-avoiding parking function if and only if the following
  criteria are satisfied: 
\begin{enumerate}
\item Each vertex has at most two children.
\item The edge labeling permutation $\pi$ is $123$-avoiding.    
\end{enumerate}  
\end{proposition}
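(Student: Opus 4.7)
The plan is to translate the proposition directly through the Garsia--Haiman bijection, using the identifications already established in the proofs of Theorems~\ref{theorem_Margie} and~\ref{theorem_parkingtrees}. Under those identifications, the vertex labeled $i$ of any parking tree has exactly $q_i = |f^{-1}(i)|$ children, and the edge labeling permutation of the tree coincides with the Garsia--Haiman permutation $\pi$ associated to the encoded parking function $f$. Consequently condition~(1) is equivalent to the statement $q_i \leq 2$ for all $i$---that is, the Garsia--Haiman Dyck word $v = U^{q_1}DU^{q_2}D\cdots U^{q_n}D$ contains no factor $UUU$---while condition~(2) is the $123$-avoidance of this same~$\pi$.

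For the forward direction I would assume that $f$ is $123$-avoiding. The observation already extracted in the proof of Theorem~\ref{theorem_Margie} immediately gives that the Garsia--Haiman permutation $\pi$ is $123$-avoiding, which is condition~(2). Moreover, if some fiber $f^{-1}(i)$ contained three elements $a<b<c$, these would appear consecutively in~$\pi$ in increasing order, giving a forbidden $123$-pattern; thus every $q_i \leq 2$, establishing condition~(1).

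For the converse I would assume conditions~(1) and~(2); in fact (2) alone suffices, since the $123$-avoidance of $\pi$ already forces $q_i \leq 2$ by the same remark. Suppose toward contradiction that $f$ has a $123$-pattern: indices $1\leq i_1 < i_2 < i_3 \leq n$ with $f(i_1) \leq f(i_2) \leq f(i_3)$. Because $\pi$ concatenates the fibers $f^{-1}(1), f^{-1}(2), \ldots, f^{-1}(n)$ in order, and lists the elements within each fiber in increasing order, the values $i_1, i_2, i_3$ occupy positions $j_1 < j_2 < j_3$ of $\pi$. Thus $\pi(j_1) < \pi(j_2) < \pi(j_3)$ is a $123$-pattern in $\pi$, contradicting condition~(2). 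Hence $f$ is $123$-avoiding.

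The whole argument is a reformulation of facts proven earlier, so there is no substantial obstacle; the only step that needs genuine care is the case analysis in the converse, where two of the indices $i_1,i_2,i_3$ may share a fiber and the weak inequality in $f$-values must be resolved by the increasing order within a fiber to confirm that $j_1 < j_2 < j_3$ really does hold.
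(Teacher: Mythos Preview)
Your proposal is correct and follows precisely the route the paper has in mind: the paper's own proof consists of the single sentence ``The proof is a direct consequence of the definitions,'' relying on the identifications (established just before the proposition and in the proofs of Theorems~\ref{theorem_Margie} and~\ref{theorem_parkingtrees}) that the edge labeling permutation equals the Garsia--Haiman permutation~$\pi$ and that the vertex labeled~$i$ has $|f^{-1}(i)|$ children. Your write-up simply unpacks these identifications and the equivalence ``$f$ is $123$-avoiding $\Leftrightarrow$ $\pi$ is $123$-avoiding'' explicitly, including the observation that condition~(1) is in fact implied by condition~(2).
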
  
The proof is a direct consequence of the definitions.
\begin{definition}
A {\em plane $0$-$1$-$2$ tree} is a rooted plane tree such that each
vertex has at most two children. A vertex of a rooted plane tree is a
{\em fork} if it has more than one child.
\end{definition}
Observe that a plane $0$-$1$-$2$ tree has one
more leaf than forks.
As noted in 
Lemma~\ref{lemma_j_UU} part~(b)
and in the proof of Theorem~\ref{theorem_Margie},
the entry $\gamma_{j} = C_{j} \cdot \binom{n}{2j}$
in the $\gamma$-vector of the associahedron
is the number of Motzkin paths of length $n$ with $j$ up steps. Next we
observe that this number is also the number of $0$-$1$-$2$ trees
with $j$ forks, by specializing the operation associating a \L
ukasziewicz word to each parking function. 
\begin{definition}
\label{definition_Motzkin_word}
Let $T$ be a plane $0$-$1$-$2$ tree with $n+1$ vertices 
labeled in some increasing order. Define the {\em associated
  Motzkin word $w_T$} as the word $x_{1}x_{2}\cdots x_{n}$ in which
$$
x_{i} =
\begin{cases}
  U  &\text{if the vertex labeled $i$ is a fork,}\\
  D  &\text{if the vertex labeled $i$ is a leaf,}\\
  H  &\text{otherwise.}\\
\end{cases}
$$  
\end{definition}  
\begin{proposition}
\label{proposition_012_Motzkin}
Let $T$ be a plane $0$-$1$-$2$ tree having $n+1$ vertices labeled in
an increasing order. Then the associated
Motzkin word $w_T$ encodes a Motzkin path of length $n$ where each
letter $U$ represents an up step, each letter $D$ represents a down step
and each letter $H$ represents a horizontal step.  
\end{proposition}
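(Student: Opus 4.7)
The plan is to verify the two defining conditions for a Motzkin path: every prefix of $w_T$ has at least as many $U$'s as $D$'s, and the total number of $U$'s equals the total number of $D$'s. Let $q_i$ denote the number of children of the vertex labeled $i$. Because a fork has $q_i = 2$ (letter $U$), a leaf has $q_i = 0$ (letter $D$), and a vertex with a single child has $q_i = 1$ (letter $H$), the height of the path after $k$ steps equals $\sum_{i=1}^{k} (q_i - 1)$.

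Since the labeling is increasing, for every $k$ the set $[k]$ is closed under taking parents and hence induces a subtree of $T$ rooted at vertex $1$, containing exactly $k - 1$ edges. The quantity $\sum_{i=1}^{k} q_i$ counts edges whose parent is in $[k]$, and this decomposes as the $k - 1$ internal edges of the subtree $[k]$ plus $e_k$, the number of edges going from $[k]$ to the complement $[k+1, n+1]$. Hence the height after $k$ steps equals $e_k - 1$. For $1 \leq k \leq n$ the complement is nonempty, and since every vertex in it is connected to the root through a path crossing the boundary we have $e_k \geq 1$, giving nonnegative height; for $k = n$ the only remaining vertex is $n+1$, which contributes exactly one boundary edge, so $e_n = 1$ and the path returns to height~$0$.

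An equivalent formulation, which I would mention for conceptual clarity, avoids counting edges directly: the word $f_{q_1} f_{q_2} \cdots f_{q_{n+1}}$ is a \L ukasziewicz word for any increasing labeling of a rooted plane tree, a fact that follows from the very bookkeeping just described. Since vertex $n+1$ has the largest label it is necessarily a leaf, so its associated letter is $f_0 = D$; removing this final $D$ turns the \L ukasziewicz word into a balanced word over $\{U, H, D\}$ in which every prefix has nonnegative weight, which is precisely the Motzkin path condition. The main (and only) point that needs care is the identification of the height with $e_k - 1$; once this is in place, nonnegativity of the height is an immediate consequence of the nonemptiness of the complement of $[k]$, and no further subtlety arises.
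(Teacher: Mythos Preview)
Your proof is correct. Both arguments ultimately rest on the same combinatorial fact---that for an increasing labeling the set $[k]$ is parent-closed and hence spans a subtree---but you and the paper package it differently. The paper adds an edge labeling to turn $T$ into a parking tree, invokes Theorem~\ref{theorem_parkingtrees} to conclude that the encoded function is a parking function, and then passes through the Dyck/\L ukasziewicz correspondence to obtain the Motzkin path. Your primary argument bypasses that machinery entirely: the identification of the height after $k$ steps with $e_k-1$ (the number of boundary edges minus one) gives nonnegativity and the final return to zero in one self-contained stroke. This is more elementary and arguably cleaner for a reader who has not yet internalized the parking-tree framework; the paper's route, on the other hand, emphasizes that the proposition is really a specialization of the \L ukasziewicz encoding already in play, which is the point your second paragraph also makes.
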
  
\begin{proof}
Label the edges of $T$ with the elements of $[n]$ bijectively in such a
way that the labels on edges incident to the same parent vertex increase
left to right. This transforms $T$ into a parking tree encoding a parking
function $f:[n] \longrightarrow [n]$ such that for each $i\in [n]$ the value
$q_{i}=|f^{-1}(i)|$ is the number of children of the vertex labeled
$i$. Hence the associated \L ukasziewicz word $f_{q_{1}}f_{q_{2}}\cdots
f_{q_{n}} f_0$ satisfies $f_{q_{i}}\in \{0,1,2\}$ for each $i\in [n]$. After
deleting the last letter $f_0$, replacing each $f_{2}$ with $U$,
each~$f_{1}$ with $H$ and each $f_0$ with $D$,
we obtain the associated Motzkin word
that encodes a Motzkin path of length $n$.      
\end{proof}

In light of Theorem~\ref{theorem_Margie},
Proposition~\ref{proposition_gamma_contribution} may be rephrased as follows. 
\begin{definition}
\label{definition_type_B_tree}
Let $B$ be a sparse subset of $[n-1]$ and let $T$ be a plane $0$-$1$-$2$
tree on $n+1$ vertices labeled in an increasing order. Let us turn
$T$ into a parking tree by using the identity permutation of~$[n]$ as
the edge labeling permutation $\pi$. We say that
the vertex-labeled tree $T$ has {\em sibling type $B$} if the integers
$b$ and $b+1$ label edges sharing the same parent vertex if and only if
$b\in B$ holds.    
\end{definition}  
\begin{corollary}
\label{corollary_gamma_contribution}
Let $B$ be a sparse subset of $[n-1]$ and let $T$ be a plane $0$-$1$-$2$
tree on $n+1$ vertices, labeled in an increasing order. Then
the
coefficient of $x^{k}$ in $g_{n,|B|}(x)$ is the number of parking trees
obtained by assigning an edge labeling to $T$ in such a way that the
encoded parking function is $123$-avoiding and has exactly $k$ ascents.
\end{corollary}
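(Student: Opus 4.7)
The plan is to identify the parking-tree edge labelings of $T$ having the two prescribed properties with a set of permutations that Proposition~\ref{proposition_gamma_contribution}(b) already counts. Namely, I will produce a bijection from such edge labelings to the set of $123$-avoiding permutations $\pi \in \SSSS_n$ satisfying $B \subseteq \Asc(\pi)$ and $\asc(\pi^{-1}) = k$. The conduit is the edge labeling permutation of a parking tree, whose reading order is fixed by $T$ and its increasing vertex labeling, and does not depend on the chosen edge labels.

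First, I will verify that the assignment ``edge labeling $\mapsto$ edge labeling permutation'' is a bijection between edge labelings of $T$ and permutations $\pi \in \SSSS_n$ satisfying $B \subseteq \Asc(\pi)$. Since the visitation order is fixed by $T$, the only constraint on an edge labeling is that among any two siblings the left child receives the smaller label. By Definition~\ref{definition_type_B_tree}, the pairs of consecutive positions in the visitation order that correspond to sibling edges are exactly those indexed by $b \in B$, so the sibling rule translates to $\pi(b) < \pi(b+1)$ for each $b \in B$.

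Next, I will use the Garsia--Haiman half of the proof of Theorem~\ref{theorem_Margie} to identify $\pi$ with the permutation that the Garsia--Haiman bijection attaches to the parking function $f$ encoded by the parking tree, where the latter is provided by Theorem~\ref{theorem_parkingtrees}. That argument shows simultaneously that $f$ is $123$-avoiding if and only if $\pi$ is, and that an index is an ascent of $f$ if and only if it is an ascent of $\pi^{-1}$. No separate $UUU$-avoidance check for the Dyck word associated to $f$ is needed, because the $0$-$1$-$2$ hypothesis on $T$ already forces $|f^{-1}(i)|\leq 2$ for every $i$. Assembling these three translations, the count of edge labelings of $T$ with the prescribed properties equals the number of $123$-avoiding $\pi \in \SSSS_n$ with $B \subseteq \Asc(\pi)$ and $\asc(\pi^{-1})=k$, which is $[x^k]g_{n,|B|}(x)$ by Proposition~\ref{proposition_gamma_contribution}(b).

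The only delicate point is the bookkeeping in the first step: confirming that the set $B$, defined through the identity edge labeling in Definition~\ref{definition_type_B_tree}, coincides with the set of positions in the visitation order at which two siblings appear consecutively. Once that identification is made explicit --- together with its immediate consequences that $B$ is automatically sparse and that $|B|$ equals the number of forks of $T$ --- the rest of the proof is a direct chain of equivalences, so I expect the hinge of the write-up to be recording this unpacking cleanly rather than any genuinely new combinatorics.
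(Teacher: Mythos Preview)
Your proposal is correct and follows precisely the route the paper intends: the paper introduces the corollary with the sentence ``In light of Theorem~\ref{theorem_Margie}, Proposition~\ref{proposition_gamma_contribution} may be rephrased as follows,'' and gives no further argument. Your write-up simply makes that rephrasing explicit --- matching edge labelings with permutations $\pi$ satisfying $B\subseteq\Asc(\pi)$, identifying $\pi$ with the Garsia--Haiman permutation via the remark preceding Theorem~\ref{theorem_parkingtrees}, and then invoking Proposition~\ref{proposition_gamma_contribution}(b) --- so there is no substantive difference in approach.
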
  
Corollary~\ref{corollary_gamma_contribution} motivates the question of
determining whether two vertex-labeled plane $0$-$1$-$2$ trees have the
same sibling type.
\begin{proposition}
Let $T$ and $T'$  be two plane $0$-$1$-$2$ trees on $n+1$ vertices
labeled in an increasing order. Then these vertex-labeled trees then have the
same number of vertices having the same sibling type if and only if, after
deleting all letters $D$ from their associated Motzkin words $w_{T}$
and $w_{T'}$, the two words become identical.  
\end{proposition}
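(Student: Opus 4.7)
The plan is to write down an explicit formula for the sibling type $B_T$ in terms of the word $w_T^*$ (the word obtained from $w_T$ by deleting every $D$), and then observe that this formula is invertible, so that $B_T$ and $w_T^*$ determine one another. The biconditional in the proposition is then immediate.

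First I set up notation. Write $w_T = x_1 x_2 \cdots x_n$ and let $q_i$ denote the number of children of the vertex labeled $i$, so $q_i = 2, 1, 0$ according as $x_i = U, H, D$. Under the identity edge labeling the edges leaving vertex $i$ receive the consecutive labels $\sum_{j<i} q_j + 1, \ldots, \sum_{j \leq i} q_j$. Hence each fork vertex $i$ (i.e.\ each position with $x_i = U$) contributes the element $b_i := 1 + \sum_{j<i} q_j$ to $B_T$, and $B_T$ is precisely the set of all such $b_i$.

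Next I reindex using $w_T^* = \tilde{x}_1 \tilde{x}_2 \cdots \tilde{x}_m$. If the letter $x_i = U$ is the $k$th non-$D$ letter of $w_T$, then the numbers $U_k, H_k$ of $U$'s and $H$'s among $\tilde{x}_1, \ldots, \tilde{x}_{k-1}$ coincide with the numbers of $U$'s and $H$'s among $x_1, \ldots, x_{i-1}$, and they satisfy $U_k + H_k = k-1$. A one-line computation now yields
\[
  b_i \;=\; 1 + 2U_k + H_k \;=\; k + U_k.
\]
Therefore $B_T = \{\, k + U_k : \tilde{x}_k = U \,\}$, an expression depending on $T$ only through $w_T^*$; in particular $w_T^* = w_{T'}^*$ immediately forces $B_T = B_{T'}$.

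For the converse, observe that the auxiliary sequence $b_k = k + U_k$ is strictly increasing in $k$, jumping by $2$ when $\tilde{x}_k = U$ and by $1$ when $\tilde{x}_k = H$. Hence $w_T^*$ can be reconstructed from $B_T$ by a greedy loop over $k = 1, 2, \ldots, m$: at each step the running count $U_k$ is known, so we compute $b_k$ and declare $\tilde{x}_k = U$ precisely when $b_k \in B_T$. The length $m$ is recovered from $|B_T|$ because a plane $0$-$1$-$2$ tree on $n+1$ vertices with $f$ forks has exactly $f+1$ leaves, giving $m = n - f = n - |B_T|$. Thus $B_T = B_{T'}$ forces $w_T^* = w_{T'}^*$, completing the equivalence. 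The only step requiring real care is the bookkeeping identity $b_i = k + U_k$ and the attendant strict monotonicity of $b_k$; everything else is routine once this is in place.
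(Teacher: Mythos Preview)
Your proof is correct and follows essentially the same idea as the paper's: both observe that in the consecutive edge-labeling process the letters $D$ are simply skipped (no edges are assigned), so the sibling type is determined by---and determines---the subword of $U$'s and $H$'s. The paper records this as a one-paragraph ``Indeed'' remark appealing to the \L ukasziewicz word, whereas you carry out the bookkeeping explicitly via the formula $b_i=k+U_k$ and its invertibility; the added detail is welcome but the underlying argument is the same.
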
  
Indeed, as seen in the proof of
Proposition~\ref{proposition_012_Motzkin}, the Motzkin word $w_{T}$
carries the same information as the associated \L ukasziewicz word
$f_{q_{1}}f_{q_{2}}\cdots f_{q_{n}} f_0$ in which 
$q_{i}=0$ corresponds to the vertex labeled~$i$ having no children. As
we list the edges whose parent vertex is labeled $1$, $2$, and so on, and label
them consecutively with the elements of $[n]$ in increasing order, no
edge is labeled when we read a letter $D$ in the Motzkin word that
corresponds to an $f_{0}$ in the \L ukasziewicz word.

\section{The toric $g$-polynomial of the permutahedron}
\label{section_permutahedron}

The $n$-dimensional {\em permutahedron}
$\Perm_{n}$
is the convex hull of the $(n+1)!$
vectors $(\pi_{1}, \pi_{2}, \ldots, \pi_{n+1})$
in~$\Rrr^{n+1}$ where $(\pi_{1}, \pi_{2}, \ldots, \pi_{n+1})$ ranges over all
permutations in the symmetric group~$\SSSS_{n+1}$.
Note that this polytope lies in the hyperplane
$x_{1}+x_{2}+\cdots+x_{n+1} = \binom{n+2}{2}$.
An alternative definition is that the permutahedron
is the Minkowski sum of the line segments
$[e_{i},e_{j}]$ for $1 \leq i < j \leq n$.

It is well known that the $h$-polynomials of the permutahedra are the
Eulerian polynomials~\cite[Eq.~(4)]{Postnikov_Reiner_Williams}. For the
$n$-dimensional permutahedron we have 
\begin{align*}
h(\Perm_{n}, x)
& =
\sum_{\pi\in \SSSS_{n+1}} x^{\des(\pi)}.
\end{align*}
As pointed out in~\cite{Postnikov_Reiner_Williams},
the $\gamma$-vector of the
permutahedron was first described in terms of permutation statistics by
Shapiro, Getu and Woan~\cite{Shapiro_slides}.
In~\cite[Theorem~11.1]{Postnikov_Reiner_Williams}
Postnikov, Reiner and Williams
rephrased~\cite[Proposition~4]{Shapiro_slides} as follows.
\begin{theorem}[Shapiro--Getu--Woan]
The $\gamma$-polynomial of the $n$-dimensional permutahedron
$\Perm_{n}$ is given by
\begin{align*}
\gamma(\Perm_{n},x)
& =
\sum_{\pi \in \widehat{\SSSS}_{n+1}} x^{\des(\pi)} ,
\end{align*}
where $\widehat{\SSSS}_{n+1}$ denotes the set of permutations of
the set $[n+1]$ not containing a final descent or a double descent.  
\end{theorem}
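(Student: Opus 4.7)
The plan is to derive this theorem from the classical fact that the $h$-polynomial of the $n$-dimensional permutahedron is the Eulerian polynomial, combined with the restricted Foata--Strehl action introduced in Subsection~\ref{subsection_Foata_Strehl}. Since the $\gamma$-vector is uniquely determined by the equation
\[
\sum_{i=0}^{n} h_{i} \cdot x^{i} = \sum_{j=0}^{\lfloor n/2\rfloor} \gamma_{n,j} \cdot x^{j} \cdot (1+x)^{n-2j},
\]
it suffices to show the orbit decomposition identity
\[
\sum_{\pi \in \SSSS_{n+1}} x^{\des(\pi)} = \sum_{\pi_{0} \in \widehat{\SSSS}_{n+1}} x^{\des(\pi_{0})} \cdot (1+x)^{n-2\des(\pi_{0})}.
\]

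First I would partition $\SSSS_{n+1}$ into orbits under the restricted Foata--Strehl action generated by the involutions $\psi_{x}$. By Corollary~\ref{corollary_right-adjusted}, each orbit contains a unique right-adjusted representative, and these representatives are precisely the elements of $\widehat{\SSSS}_{n+1}$. Next, I would verify the key local claim: if vertex $x$ has exactly one child in the current Foata--Strehl tree, then applying $\psi_{x}$ changes $\des$ by exactly $\pm 1$. In the right-adjusted form, every single-child vertex has an only right child, corresponding by Lemma~\ref{lemma_children} to a double ascent of the extended permutation (using the sentinel convention $\pi_{0}(0) = \pi_{0}(n+2) = 0$). Swapping this only child to the left side turns the local pattern around $x$ from ascent-ascent into descent-descent, adding one descent while leaving all other descents unchanged, as the left and right subtrees are rearranged but their internal inorder descent structure is preserved.

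Because distinct single-child vertices of the right-adjusted tree have disjoint subtrees, their $\psi_{x}$-operations commute and act independently on disjoint stretches of the permutation. Consequently, the orbit of $\pi_{0}$ has size $2^{S}$ where $S$ is the number of single-child vertices, and its contribution to the Eulerian polynomial factors as
\[
\sum_{I \subseteq [S]} x^{\des(\pi_{0}) + |I|} = x^{\des(\pi_{0})} \cdot (1+x)^{S}.
\]
The final counting step is to show $S = n - 2\des(\pi_{0})$. Classify each position $i \in [n+1]$ of the extended sequence as peak, valley, double ascent, or double descent. The no-double-descent and no-final-descent conditions defining $\widehat{\SSSS}_{n+1}$ imply that no position is a double descent (in particular the extended position $n+1$ is a peak). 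Since peaks and valleys of the extended sequence alternate and both the extended positions $0$ and $n+2$ act as minima, the peak count exceeds the valley count by one. A quick bookkeeping yields $P = \des(\pi_{0}) + 1$ and $V = \des(\pi_{0})$, so $S = (n+1) - P - V = n - 2\des(\pi_{0})$.

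Summing the orbit contributions over $\pi_{0} \in \widehat{\SSSS}_{n+1}$ then gives the required identity and therefore the stated formula for $\gamma_{n,j}$. The main obstacle I anticipate is the rigorous verification of Step~2: one must track carefully how the inorder traversal changes when a subtree is flipped, including the boundary cases at positions $1$ and $n+1$. Once this local claim is handled, the rest is straightforward combinatorial accounting.
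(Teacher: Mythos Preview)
The paper does not supply its own proof of this theorem; it is quoted as a result of Shapiro--Getu--Woan (rephrased by Postnikov--Reiner--Williams), and the paragraph following the statement merely sketches the mechanism---the restricted Foata--Strehl action with right-adjusted trees as orbit representatives---which is precisely the route you take. Your argument is correct and fills in exactly the details the paper leaves implicit.

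One small correction: your claim that ``distinct single-child vertices of the right-adjusted tree have disjoint subtrees'' is false (on a chain $1\text{--}2\text{--}\cdots\text{--}k$ of right children the subtrees are nested), and likewise the ``disjoint stretches'' description is inaccurate. The clean way to see both commutativity and additivity is the identity $\des(\pi)=\#\{\text{vertices of the Foata--Strehl tree having a left child}\}$: each $\psi_x$ toggles whether the vertex labeled $x$ has a left child and leaves the child-structure at every other vertex untouched, so the $\psi_x$ commute and their effects on $\des$ are independent $\pm 1$'s. With this observation your Step~2 and the orbit-size count go through without further work, and your peak/valley bookkeeping $S=n-2\des(\pi_0)$ is correct as stated.
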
  
The technique of slides
used to prove their formulas was generalized to chordal nestohedra
in~\cite{Postnikov_Reiner_Williams}; see
Section~\ref{section_chordal_nestohedra}. As pointed out
in~\cite{Branden}, 
the ${\mathbb Z}_{2}^{n-1}$ action introduced in~\cite{Shapiro_slides} is
the same as Br\"and\'en's modified Foata--Strehl group
action discussed in Subsection~\ref{subsection_Foata_Strehl}.
Recall that this action {\em reverses} the labeling on the associated
Foata--Strehl trees, turning them into decreasing trees. On the other
hand, the actions of {\em hops} in~\cite{Postnikov_Reiner_Williams}
define precisely the restricted Foata--Strehl action. As noted in
Subsection~\ref{subsection_Foata_Strehl}, we may choose
the orbit representatives to be the right-adjusted Foata--Strehl
trees. By Corollary~\ref{corollary_right-adjusted} these are exactly the
Foata--Strehl trees representing permutations having no double descents
and no final descent. Using Definition~\ref{definition_right-adjusted} we    
may identify these with the labeled plane
increasing $0$-$1$-$2$ trees. This is how they are called
in~\cite{Bergeron_Flajolet_Salvy}. As stated
in~\cite{Bergeron_Flajolet_Salvy}, $\gamma_{j}(\Perm_{n})$
is the number of labeled plane increasing $0$-$1$-$2$ trees
on $n+1$ vertices having $j$ forks.

The $\gamma$-vector entries of the permutahedron are given as sequence
A101280 in OEIS~\cite{OEIS}.
The entries of the $\gamma$-vector of the $n$-dimensional permutahedron
satisfy the initial condition
$\gamma_{0}(\Perm_{n}) = 1$
and the following recurrence:
\begin{align}
\gamma_{j}(\Perm_{n})
& =
(j+1)\cdot \gamma_{j}(\Perm_{n-1}) + (2n+2-4j) \cdot \gamma_{j-1}(\Perm_{n-1}) ,
\end{align}
for $1 \leq j \leq \half{n}$.

A generating function formula for the $\gamma$-vector may be found using
the calculation in~\cite[Section~7]{Foata_Han}. Note that the numbers
$D_{n,k}$, whose 
generating function Foata and Han computed, count those orbits of the
Foata--Strehl group action where each tree has $k$ nonroot leaves. 
The modified Foata--Strehl group action has smaller orbits.  To obtain
the $\gamma$-vector entries we need to multiply each $D_{n,k}$ by the
appropriate power of $2$. Subject to these modifications one may derive the
following result; see sequence A101280 in OEIS~\cite{OEIS}.
\begin{proposition}[Peter Bala]
Introducing $r(x)=\sqrt{1-4x}$ and $w(x)=\frac{1-r(x)}{1+r(x)}$, we have
\begin{align*}
\sum_{n \geq 0} \gamma(\Perm_{n},x) \cdot \frac{t^{n+1}}{(n+1)!}
& =
\frac{1}{2x}\cdot \left(r(x)\cdot
\frac{1+w(x)\cdot e^{t\cdot r(x)}}{1-w(x)\cdot e^{t\cdot r(x)}}-1\right).
\end{align*}
\end{proposition}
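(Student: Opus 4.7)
The plan is to translate the combinatorial description of the $\gamma$-vector entries recalled in the discussion preceding the proposition into an ordinary differential equation for the exponential generating function, and then to solve this ODE explicitly.

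Recall from that discussion that $\gamma_{n,j}$ equals the number of labeled plane increasing $0$-$1$-$2$ trees on $n+1$ vertices having exactly $j$ forks. Set
\[
F(t,x) = \sum_{n \geq 0} \gamma_{n}(x) \cdot \frac{t^{n+1}}{(n+1)!}.
\]
I would decompose every such tree by removing its root (necessarily labeled $1$), producing three disjoint cases: an isolated root, a root of outdegree one carrying a subtree on the remaining $n$ labels, or a root of outdegree two whose two subtrees partition the remaining labels as an ordered pair and contribute a factor $x$ for the new fork. Standard exponential-generating-function calculus then yields the Riccati initial value problem
\[
\frac{\partial F}{\partial t} = 1 + F + x\,F^{2}, \qquad F(0,x) = 0.
\]

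I would solve this ODE by separation of variables. Factoring $1 + F + xF^{2} = x(F - F_{+})(F - F_{-})$ with $F_{\pm} = (-1 \pm r(x))/(2x)$, a partial fraction decomposition gives
\[
\frac{1}{r(x)}\cdot\ln\frac{F - F_{+}}{F - F_{-}} = t + C.
\]
The initial condition $F(0,x) = 0$ forces $e^{r(x)\cdot C} = F_{+}/F_{-} = (1-r(x))/(1+r(x)) = w(x)$, whence
\[
\frac{F - F_{+}}{F - F_{-}} = w(x) \cdot e^{t\, r(x)}.
\]
Solving algebraically for $F$ and substituting the values of $F_{\pm}$ yields
\[
F(t,x) = \frac{1}{2x}\cdot\frac{(r-1) + (r+1)\,w(x)\,e^{t r}}{1 - w(x)\,e^{t r}}.
\]
To match the form stated in the proposition, I would invoke the identity $(r+1)\,w(x) = 1 - r$, which is immediate from the definition of $w$; this rewrites the numerator as $r(1 + w(x)\,e^{t r}) - (1 - w(x)\,e^{t r})$, producing the expression asserted in the proposition.

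The only delicate point is verifying that the tree decomposition truly yields the quadratic term with coefficient $1$ rather than $xF^{2}/2$; this requires the plane-tree convention that the two subtrees of a fork are ordered, so that no symmetry factor arises in the exponential-generating-function computation. Apart from this bookkeeping, all steps are mechanical. An alternative route, alluded to in the paragraph immediately preceding the proposition, is to begin with Foata and Han's generating function for the orbit counts $D_{n,k}$ of the unrestricted Foata--Strehl action and multiply by the appropriate power of $2$ in order to account for the smaller orbits of the modified action; both approaches yield the same closed form.
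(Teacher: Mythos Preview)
Your proof is correct and complete. The Riccati ODE $\partial F/\partial t = 1 + F + xF^{2}$ follows exactly as you describe from the root-decomposition of labeled plane increasing $0$-$1$-$2$ trees, and your separation-of-variables solution, together with the identity $(r+1)w = 1-r$, reproduces the stated closed form without error. Your remark about the plane ordering eliminating the symmetry factor $1/2$ is the right justification for the coefficient of $F^{2}$.

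The paper itself does not supply a proof: it only indicates that the formula can be extracted from Foata and Han's computation of the generating function for the numbers $D_{n,k}$ (orbits of the full Foata--Strehl action), after adjusting by powers of $2$ to pass to the modified action, and points to the OEIS entry. Your approach is therefore genuinely different and in fact more direct: rather than quoting an external generating-function calculation and patching it, you derive the differential equation straight from the tree interpretation of $\gamma_{n,j}$ already established in the text (via \cite{Bergeron_Flajolet_Salvy}) and solve it in closed form. This makes your argument self-contained within the paper. You even acknowledge the Foata--Han route as an alternative, which is precisely the route the paper gestures at.
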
  

Keeping in mind Corollary~\ref{corollary_gamma_contribution}, we can
express the toric $g$-polynomial of the permutahedron in terms of an
ascent statistics on parking trees as follows. 

\begin{theorem}
The coefficient of $x^{k}$ in the toric $g$-polynomial
of the $n$-dimensional permutahedron~$\Perm_{n}$ is the number of parking trees
on $n+1$ nodes 
encoding $123$-avoiding parking function having exactly $k$ ascents.   
\label{theorem_Margie3}
\end{theorem}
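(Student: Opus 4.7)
The plan is to assemble the theorem from three already-established ingredients. First, Theorem~\ref{theorem_peaks} writes
\[
g(P,x)=\sum_{j=0}^{\lfloor n/2\rfloor}\gamma_{n,j}\cdot g_{n,j}(x).
\]
Second, the Shapiro--Getu--Woan theorem, translated via the restricted Foata--Strehl action and Corollary~\ref{corollary_right-adjusted}, identifies $\gamma_{n,j}$ with the number of labeled plane increasing $0$-$1$-$2$ trees on $n+1$ vertices that have exactly $j$ forks. Third, Corollary~\ref{corollary_gamma_contribution} asserts that for any such tree $T$ with sibling type $B=B(T)$, the coefficient of $x^{k}$ in $g_{n,|B(T)|}(x)$ equals the number of edge relabelings of $T$ that turn it into a parking tree whose associated parking function is $123$-avoiding and has $k$ ascents.

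The one small technical point to verify is that $|B(T)|$ equals the number of forks of $T$. Under the canonical identity edge labeling used in Definition~\ref{definition_type_B_tree}, edges sharing the same parent receive consecutive labels, listed left to right. Hence every fork (a vertex with two children) contributes exactly one pair $(b,b+1)$ to the sibling type, while vertices with a single child or no children contribute nothing. Consequently $|B(T)|$ equals the number of forks, which by the Shapiro--Getu--Woan interpretation is precisely the index $j$ counted by $\gamma_{n,j}$.

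Substituting this into Theorem~\ref{theorem_peaks} rewrites the toric $g$-polynomial as
\[
g(P,x)=\sum_{T} g_{n,|B(T)|}(x),
\]
where the sum runs over all labeled plane increasing $0$-$1$-$2$ trees $T$ on $n+1$ vertices. Applying Corollary~\ref{corollary_gamma_contribution} to each summand, the coefficient of $x^{k}$ on the right-hand side counts ordered pairs $(T,\eta)$ where $\eta$ is an edge relabeling of $T$ such that the resulting parking tree encodes a $123$-avoiding parking function with $k$ ascents. By the characterization of parking trees representing $123$-avoiding parking functions (each such parking tree is a plane $0$-$1$-$2$ tree whose edge labeling permutation is $123$-avoiding), every such parking tree on $n+1$ nodes arises from a unique pair $(T,\eta)$: its underlying vertex-labeled shape is $T$ and $\eta$ recovers the edge labeling. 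This gives the desired bijection and completes the argument.

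The main obstacle is simply the bookkeeping in the second paragraph---checking that the canonical edge labeling translates forks bijectively into sparse pairs of consecutive siblings. Once this is in hand, the theorem reduces to a routine assembly of Theorem~\ref{theorem_peaks}, the Shapiro--Getu--Woan enumeration of the $\gamma$-vector, and Corollary~\ref{corollary_gamma_contribution}, with no fresh combinatorial construction required.
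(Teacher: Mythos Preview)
Your proposal is correct and follows essentially the same route as the paper's own proof: both invoke the interpretation of $\gamma_{n,j}$ as the number of labeled plane increasing $0$-$1$-$2$ trees with $j$ forks, observe that $j$ forks corresponds to a sibling type $B$ with $|B|=j$, apply Corollary~\ref{corollary_gamma_contribution} to each such tree, and assemble via Theorem~\ref{theorem_peaks}. Your second paragraph makes the fork-to-sibling-type bookkeeping more explicit than the paper does, but the underlying argument is identical.
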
  
\begin{proof}
As noted above, $\gamma_{j}(\Perm_{n})$ is the number 
of (vertex-)labeled plane increasing $0$-$1$-$2$ trees $T$ on $n+1$ vertices
having $j$ forks~\cite{Bergeron_Flajolet_Salvy}. The tree $T$ has $j$ forks
if and only if its sibling type is $B$ for some $j$-element sparse
subset of $[n-1]$. By Corollary~\ref{corollary_gamma_contribution},
the coefficient of $x^{k}$ in $g_{n,|B|}(x)$ is the number of parking trees
obtained by adding an edge labeling to $T$ in such a way that the
encoded parking function is $123$-avoiding and has $k$ ascents.
The statement is now a direct consequence of
Theorem~\ref{theorem_peaks}.  
\end{proof}

The toric $g$-vectors of the permutahedra up to dimension $8$ are listed in
Table~\ref{table_gpol_perm}. 
\begin{table}[t]
\begin{tabular}{|c|| r| r| r| r| r|}  
\hline
  \backslashbox{$n$}{$j$} & 0 & 1 & 2 & 3& 4\\
\hline  
  1& 1 &     &      &      &     \\
  2& 1 &   3 &      &      &     \\
  3& 1 &   20 &      &     &      \\
  4& 1 &  115 &   40 &     &      \\
  5& 1 &  714 &  735 &     &       \\
  6& 1 & 5033&  10101 &  1225 &    \\  
  7& 1 & 40312 & 131068 & 45304&   \\
  8& 1 & 362871 & 1723716 & 1143996 & 67956  \\ 
\hline
\end{tabular}
\vspace*{1 mm}
\caption{The toric $g$-vector of the permutahedron up to dimension $8$.
See also~\cite[Page~195]{Stanley_toric_g}.}
\label{table_gpol_perm}  
\end{table}

The proof of Theorem~\ref{theorem_Margie3} may be adapted to show
several analogous results by focusing  on the following key ideas.  A
parking tree consists of an increasing vertex labeling and a compatible
edge labeling.
After fixing the vertex labeling one may add a compatible edge labeling.
Summing over all possible edge labelings results in a
contribution of $g_{n,j}(x)$ by
Corollary~\ref{corollary_gamma_contribution}.  If there is an expression
of $\gamma_{j}$ in the $\gamma$-vector of the polytope as a number of
vertex-labeled increasing plane $0$-$1$-$2$ trees then there is an
analogous result for the toric $g$-polynomials.

Postnikov, Reiner and Williams
showed~\cite[Section~10.2]{Postnikov_Reiner_Williams}
that the entry $\gamma_{j}$ in the
$\gamma$-vector of the $n$-dimensional associahedron is the number of
$312$-avoiding permutations in~$\SSSS_{n+1}$
having exactly $j$~descents, no double
descents and no final descent. They also note that a
permutation is $312$-avoiding if and only if the vertices of its
Foata--Strehl tree are labeled in the depth-first-search
order. Hence, adapting the proof of Theorem~\ref{theorem_Margie3}
yields the following result:
\begin{theorem}
The coefficient of~$x^{k}$ in the $\gamma$-polynomial
of the $n$-dimensional associahedron~$\Ass_{n}$ is the number of
parking trees whose vertices are labeled in the depth-first search
order that encode a $123$-avoiding parking function having
exactly $k$ ascents.
\end{theorem}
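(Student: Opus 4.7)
The plan is to adapt the proof of Theorem~\ref{theorem_Margie3} essentially verbatim, with the depth-first search vertex labeling playing the role that arbitrary increasing vertex labelings play there. The starting point is the Postnikov--Reiner--Williams identity recalled just above the statement: $\gamma_{n,j}$ counts $312$-avoiding permutations in $\SSSS_{n+1}$ having exactly $j$ descents, no double descent, and no final descent. Combining Corollary~\ref{corollary_right-adjusted} (which says that ``no double or final descent'' is equivalent to the Foata--Strehl tree being right-adjusted) with the noted equivalence between $312$-avoidance and the depth-first search vertex labeling, I would identify such permutations with plane $0$-$1$-$2$ trees on $n+1$ vertices whose vertex labels appear in depth-first search order. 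The statistic ``$j$ descents'' translates to ``$j$ forks'' via Lemma~\ref{lemma_children}: under the no-double/final-descent hypothesis every descent at position $i$ is immediately followed by an ascent at $i+1$, so $i+1$ is a valley, which the lemma identifies with a fork.

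Having set up this dictionary, I would invoke Corollary~\ref{corollary_gamma_contribution}: after fixing a DFS-labeled plane $0$-$1$-$2$ tree $T$ of sibling type $B$ with $|B|=j$, the number of edge labelings turning $T$ into a parking tree whose encoded parking function is $123$-avoiding and has exactly $k$ ascents is the coefficient of $x^{k}$ in $g_{n,j}(x)$. Summing first over edge labelings, then over all DFS-labeled trees with $j$ forks (contributing $\gamma_{n,j}$), and finally over $j$, the total count becomes
\[
\sum_{j=0}^{\lfloor n/2\rfloor}\gamma_{n,j}\cdot [x^{k}]\,g_{n,j}(x),
\]
which by Theorem~\ref{theorem_peaks} is precisely the coefficient of $x^{k}$ in the polynomial on the left-hand side of the claim.

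The main obstacle, as I see it, is setting up the three-way dictionary---among $312$-avoiding permutations without double or final descents, DFS-labeled plane $0$-$1$-$2$ trees, and the descent/fork statistic---cleanly, particularly verifying that descents and forks count the same thing for this restricted class of permutations. Once this dictionary is established, the rest of the argument is a mechanical transfer of the proof of Theorem~\ref{theorem_Margie3}, with no new identities required beyond Theorem~\ref{theorem_peaks} and Corollary~\ref{corollary_gamma_contribution}.
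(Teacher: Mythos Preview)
Your proposal is correct and follows essentially the same approach as the paper: the paper simply says that ``adapting the proof of Theorem~\ref{theorem_Margie3}'' yields the result, using the Postnikov--Reiner--Williams description of $\gamma_{n,j}$ for the associahedron in terms of $312$-avoiding permutations and the fact that $312$-avoidance is equivalent to the Foata--Strehl tree being labeled in depth-first search order. Your write-up is in fact more detailed than the paper's, since you spell out the descent--fork dictionary via Lemma~\ref{lemma_children}; note only that this bijection (descent at $i$ $\leftrightarrow$ valley at $i+1$ $\leftrightarrow$ fork at $\pi(i+1)$) requires observing that $\pi(1)$ and $\pi(n+1)$ are never forks in a right-adjusted tree, so that all forks are accounted for by valleys at interior positions.
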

This statement is equivalent to
Theorem~\ref{theorem_Margie} since by
Theorem~\ref{theorem_parkingtrees} every parking function may be
uniquely encoded by a parking tree whose vertices are labeled in the
depth-first search order.

\section{Toric $g$-polynomials of chordal nestohedra}
\label{section_chordal_nestohedra}

In this section we review some of the terminology and results of
Postnikov, Reiner and Williams~\cite{Postnikov_Reiner_Williams}.  We then
outline how their expressions for the $\gamma$-vectors of chordal nestohedra can
be extended to results on the toric $g$-polynomials of these 
polytopes. Some cited facts and definitions also appear in earlier papers.
We refer the reader to~\cite{Postnikov_Reiner_Williams}
for a more complete bibliography.  

\begin{definition}
Given a finite set $S$,
a {\em building set} $\mathcal{B}$ on a set $S$ is a collection of
nonempty subsets of~$S$ such that
\begin{enumerate}
\item
if $I, J \in  \mathcal{B}$ and $I \cap J \neq \emptyset$
then $I \cup J \in \mathcal{B}$,
\item
$\mathcal{B}$ contains the singleton $\{i\}$ for all $i \in S$.
\end{enumerate}
The building set $\mathcal{B}$ on $S$ is {\em connected}
if $S \in \mathcal{B}$.
The connected components of a building set $\mathcal{B}$
are the maximal subsets under inclusion that are contained in $\mathcal{B}$.
For any subset $T \subseteq S$ we define the
{\em $T$-restricted building set}
$\mathcal{B}|_{T} = \{I \in \mathcal{B} : I \subseteq T\}$.
\end{definition}
For a graph $G$ on the vertex set $S$
with no loops and no multiple edges,
the graphical building set $\mathcal{B}(G)$
is the family of all nonempty
sets $I \subseteq S$ where 
the induced graph~$G|_{I}$ is connected.
Given a connected building set~$\mathcal{B}$,
we have the associated
{\em nestohedron} $P_{\mathcal{B}}$
defined as the Minkowski sum 
$$ P_{\mathcal{B}} =\sum_{I\in \mathcal{B}} \Delta_{I}, $$
where $\Delta_{I}$ is the convex hull of the basis vectors
$e_{i}$ satisfying $i\in I$.  The nestohedron~$P_{\mathcal{B}}$ is known
to be a simple polytope. A connected building set $\mathcal{B}$ is {\em
  chordal} if, for any of the sets 
$I = \{i_{1} < i_{2} < \cdots < i_{r}\}$ in~$\mathcal{B}$, all subsets
of the form $\{i_{s}, i_{s+1},\ldots, i_{r}\}$ also belong to $\mathcal{B}$. 
For chordal building sets the $h$-vector and the $\gamma$-vector of the
nestohedron~$P_{\mathcal{B}}$ may be expressed in terms of
descent statistics of {\em $\mathcal{B}$-permutations}, defined as
follows~\cite[Definition~8.7]{Postnikov_Reiner_Williams}.
\begin{definition}
The set of $\mathcal{B}$-permutations of a connected building on
the set $[n]$ is the set of all permutations $\pi$ of $[n]$ such that for any
$i \in [n]$, the two elements $\pi(i)$ and $\max\{\pi(1),\pi(2), \ldots, \pi(i)\}$
lie in the same connected component of the restricted building set
$B|_{\{\pi(1),\ldots,\pi(i)\}}$.
Denote the set of $\mathcal{B}$-permutations by
$\SSSS_{n}(\mathcal{B})$. 
\end{definition}
As stated in~\cite[Corollary~9.6]{Postnikov_Reiner_Williams},
the $h$-polynomial of the chordal nestohedra satisfy
\begin{equation}
\label{equation_hvector}  
h_{\mathcal{B}}(t) =\sum_{w\in\SSSS_{n}(\mathcal{B})} t^{\des(w)}.
\end{equation}
\begin{remark}
\label{remark_Btrees}
{\em More generally, Postnikov, Reiner and
Williams also have an expression for the $h$-vector of any nestohedron
associated to a  connected building
set~\cite[Corollary~8.4]{Postnikov_Reiner_Williams}. It
uses the descent statistics of {\em $\mathcal{B}$-trees} whose
definition we omit here
(see Section~8
of~\cite{Postnikov_Reiner_Williams}).
For chordal nestrohedra
the descent statistics of
$\mathcal{B}$-trees and $\mathcal{B}$-permutations 
coincide~\cite[Proposition~9.5]{Postnikov_Reiner_Williams}. 
Note that the {\em $\mathcal{B}$-trees} are
unrelated to the trees we consider in this section. 
}   
\end{remark}

For our purposes, the most important result of  Postnikov,
Reiner and Williams 
is as follows~\cite[Theorem~11.6]{Postnikov_Reiner_Williams}.
\begin{theorem}[Postnikov--Reiner--Williams]
The $\gamma$-polynomial of
a chordal nestohedron $P_{\mathcal{B}}$ is given by
\begin{equation}
\label{equation_gamma}    
\gamma_{\mathcal{B}}(t) =
\sum_{w\in\widehat{\SSSS}_{n}(\mathcal{B})} t^{\des(w)},
\end{equation}
where $\widehat{\SSSS}_{n}(\mathcal{B})$ is the set of all
$\mathcal{B}$-permutations containing no double descents and no final
descents.
\end{theorem}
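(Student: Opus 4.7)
The plan is to derive the formula for $\gamma_{\mathcal{B}}(t)$ from the $h$-polynomial identity~\eqref{equation_hvector} by introducing a $\Zzz_{2}^{n-1}$-action on $\SSSS_{n}(\mathcal{B})$ whose orbit representatives are precisely the elements of $\widehat{\SSSS}_{n}(\mathcal{B})$ and for which the descent generating function on each orbit factors as $t^{j}(1+t)^{n-1-2j}$. The natural candidate is the restricted Foata--Strehl action $\psi_{x}$ from Subsection~\ref{subsection_Foata_Strehl}.

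First, I would recall that $\psi_{x}$ is the identity unless the vertex labeled $x$ in the Foata--Strehl tree of $\pi$ has exactly one child, in which case $\psi_{x}$ swaps the left and right subtrees at $x$. By Corollary~\ref{corollary_right-adjusted}, the orbit representatives of this action on all of $\SSSS_{n}$ are precisely the permutations with no double descents and no final descent. A standard Br\"and\'en-style argument (going back to~\cite{Shapiro_slides} in the permutahedron case) then shows that the orbit of any representative $w$ with $\des(w)=j$ contributes exactly $t^{j}(1+t)^{n-1-2j}$ to the descent generating function, since there are $n-1-2j$ indices at which an independent toggle switches between an ascent and a descent.

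Second, and this is the crux of the argument, I would show that when $\mathcal{B}$ is chordal each generator $\psi_{x}$ restricts to a well-defined map $\SSSS_{n}(\mathcal{B})\longrightarrow \SSSS_{n}(\mathcal{B})$. Recall that $\pi\in\SSSS_{n}(\mathcal{B})$ means that for each $i$ the elements $\pi(i)$ and $\max\{\pi(1),\ldots,\pi(i)\}$ lie in the same connected component of $\mathcal{B}|_{\{\pi(1),\ldots,\pi(i)\}}$. The hop $\psi_{x}$ rearranges only the contiguous window in $\pi$ corresponding to the inorder reading of the subtree rooted at $x$. I would track how the prefix sets $\{\pi(1),\ldots,\pi(i)\}$ transform: outside the window they are unchanged, and inside the window they are replaced by prefix sets whose underlying sets are ordered suffixes of the original. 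The chordal axiom---for any $\{i_{1}<\cdots<i_{r}\}\in\mathcal{B}$, each suffix $\{i_{s},\ldots,i_{r}\}$ also belongs to $\mathcal{B}$---is precisely what is needed to ensure the required connectivity is preserved.

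Third, once the action is known to preserve $\SSSS_{n}(\mathcal{B})$, the orbit representatives of the restricted action on this set are exactly the elements of $\widehat{\SSSS}_{n}(\mathcal{B})$. Combining the orbit-sum identity with equation~\eqref{equation_hvector} gives
\begin{align*}
h_{\mathcal{B}}(t)
& = \sum_{w \in \widehat{\SSSS}_{n}(\mathcal{B})} t^{\des(w)} \cdot (1+t)^{n-1-2\des(w)} .
\end{align*}
Comparison with the palindromic expansion~\eqref{equation_gamma_vector} then yields the claimed formula for $\gamma_{\mathcal{B}}(t)$. The main obstacle is the second step: a single hop $\psi_{x}$ can alter several prefix sets simultaneously, so the chordal axiom must be applied uniformly across all affected prefixes. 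A natural simplification is to induct on the depth or size of the subtree being swapped and localize the verification to a single generator, but the bookkeeping required to show that each altered prefix still satisfies the connectivity condition is the technical heart of the argument.
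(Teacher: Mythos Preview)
Your overall strategy---define a $\Zzz_{2}^{n-1}$-action on $\SSSS_{n}(\mathcal{B})$ whose orbit-sums give the expansion $h_{\mathcal{B}}(t)=\sum_{w}t^{\des(w)}(1+t)^{n-1-2\des(w)}$---is exactly the approach of Postnikov--Reiner--Williams, and the paper itself does not reprove the result but only sketches this. However, the action PRW use is not the restricted Foata--Strehl action: they introduce building-set-dependent \emph{$\mathcal{B}$-hops}, and the paper explicitly notes that these coincide with the restricted Foata--Strehl action only for the permutahedron. Your crucial second step, that each $\psi_{x}$ sends $\SSSS_{n}(\mathcal{B})$ to itself whenever $\mathcal{B}$ is chordal, is in fact false.

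Here is a concrete counterexample. Let $G$ be the tree on $[7]$ with edge set $\{\,\{1,5\},\{3,5\},\{5,7\},\{6,7\},\{4,6\},\{2,6\}\,\}$, and let $\mathcal{B}=\mathcal{B}(G)$ be the associated graphical building set. One checks directly that $\mathcal{B}$ is chordal: for every subtree $I$ the value-suffix $I\cap[\ell,7]$ is again a subtree, because removing the smallest-labeled vertices always removes leaves. Take $\pi=(5,1,3,6,4,2,7)$. Running through the prefixes shows $\pi\in\SSSS_{7}(\mathcal{B})$; in particular at $i=5$ the prefix is $\{1,3,4,5,6\}$, whose components in $\mathcal{B}|_{\{1,3,4,5,6\}}$ are $\{1,3,5\}$ and $\{4,6\}$, and $\pi(5)=4$ lies with the maximum $6$. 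The value $3=\pi(3)$ is a double ascent (only a right child in the Foata--Strehl tree), and $\psi_{3}(\pi)=(5,1,6,4,3,2,7)$. Now at position $i=5$ we have $\pi'(5)=3$ with the same prefix $\{1,3,4,5,6\}$, but $3\in\{1,3,5\}$ while $\max=6\in\{4,6\}$, so $\psi_{3}(\pi)\notin\SSSS_{7}(\mathcal{B})$.

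Your suffix argument using the chordal axiom is actually correct for the intermediate positions $a\le i<d$ (both $\pi'(i)$ and the maximum exceed $x$, so the suffix $C_{>x}$ of the old component works). The failure occurs at the landing position $i=d$, where you must show that $x$ itself lies in the component of $\max P_{d}$ in $\mathcal{B}|_{P_{d}}$; nothing in the $\mathcal{B}$-permutation conditions for $\pi$ forces this, and the example above shows it can fail. This is precisely why PRW replace $\psi_{x}$ by a $\mathcal{B}$-hop that moves $x$ only as far as the building set permits.
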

The proof of this theorem
relies on considering a
${\mathbb Z}_{2}^{n-1}$-action of {\em $\mathcal{B}$-hops} that is similar
to the restricted Foata--Strehl action.
In case of the permutahedron,
the $\mathcal{B}$-hops coincide with
the restricted Foata--Strehl action.
For the definition and detailed study of the
$\mathcal{B}$-hops, we refer the reader
to~\cite[Section~11]{Postnikov_Reiner_Williams}. For us it suffices to
observe the following direct consequence of
equation~\eqref{equation_gamma}.
\begin{corollary}
\label{corollary_gamma}
In the $\gamma$-vector of a chordal nestohedron $P_{\mathcal{B}}$ the entry
$\gamma_{j}$ is the number of right-adjusted Foata-Strehl trees
representing a $\mathcal{B}$-permutation and having $j$ forks. 
\end{corollary}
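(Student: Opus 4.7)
The plan is to derive Corollary~\ref{corollary_gamma} as a direct translation of the Postnikov--Reiner--Williams formula \eqref{equation_gamma}, using two results already established earlier in the paper: Corollary~\ref{corollary_right-adjusted}, which identifies the permutations with no double descent and no final descent, and Lemma~\ref{lemma_children}, which identifies valleys with forks in the Foata--Strehl tree.

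First I would use Corollary~\ref{corollary_right-adjusted} to replace the indexing set in~\eqref{equation_gamma}. That corollary tells us that a permutation $\pi\in\SSSS_{n}$ has no double descent and no final descent if and only if its Foata--Strehl tree is right-adjusted. Since the notion of being a $\mathcal{B}$-permutation depends only on $\pi$ itself (equivalently, on the inorder traversal of its Foata--Strehl tree), the map $\pi\mapsto T(\pi)$ is a bijection between $\widehat{\SSSS}_{n}(\mathcal{B})$ and the set of right-adjusted Foata--Strehl trees whose inorder traversal is a $\mathcal{B}$-permutation.

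Next, I would match the statistics. The formula~\eqref{equation_gamma} records the $j$-th entry $\gamma_{j}$ as the number of $\pi\in\widehat{\SSSS}_{n}(\mathcal{B})$ satisfying $\des(\pi)=j$, while the corollary requires $j$ forks in $T(\pi)$. I would therefore establish the equality $\des(\pi)=\#\{\text{forks of }T(\pi)\}$ for every $\pi\in\widehat{\SSSS}_{n}(\mathcal{B})$. By Lemma~\ref{lemma_children}, for each index $2\leq i\leq n-1$, the vertex visited at position $i$ of the inorder traversal has two children (is a fork) if and only if $i$ is a valley; and the extreme vertices at positions $i=1$ and $i=n$ can never be forks, since they are leftmost, respectively rightmost, in inorder and thus miss either a left or a right child. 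Consequently the number of forks in $T(\pi)$ equals the number of valleys of $\pi$.

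Finally, I would verify that for any $\pi$ without double descents and without final descent, the descents and valleys are in bijection via $i\mapsto i+1$. Indeed, if $i$ is a descent then $i\leq n-2$ (no final descent) and $\pi(i+1)<\pi(i+2)$ (no double descent), so $\pi(i)>\pi(i+1)<\pi(i+2)$, making $i+1$ a valley in the admissible range $2\leq i+1\leq n-1$. Conversely, every valley $j$ in that range satisfies $\pi(j-1)>\pi(j)$, so $j-1$ is a descent. Combining these three ingredients yields $\gamma_{j}=\#\{\pi\in\widehat{\SSSS}_{n}(\mathcal{B}):\des(\pi)=j\}=\#\{T(\pi):\pi\text{ is a }\mathcal{B}\text{-permutation, }T(\pi)\text{ right-adjusted with }j\text{ forks}\}$. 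The only mild obstacle is the bookkeeping in the descent-to-valley bijection, for which the hypotheses of $\widehat{\SSSS}_{n}(\mathcal{B})$ are used in an essential way; once they are invoked, the identification is immediate.
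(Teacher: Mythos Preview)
Your proof is correct and is precisely the kind of argument the paper has in mind; the paper itself gives no proof beyond the remark that the corollary is ``a direct consequence of equation~\eqref{equation_gamma},'' and your three-step translation (Corollary~\ref{corollary_right-adjusted} to replace $\widehat{\SSSS}_{n}(\mathcal{B})$ by right-adjusted trees, Lemma~\ref{lemma_children} to identify forks with valleys, and the descent-to-valley bijection $i\mapsto i+1$ valid under the no-double-descent/no-final-descent hypotheses) is exactly how that direct consequence is unpacked. The same identification of $\des(\pi)$ with the number of forks was already used implicitly in Section~\ref{section_permutahedron} (with reference to~\cite{Bergeron_Flajolet_Salvy}), so nothing new is needed here.
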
  
Using Corollary~\ref{corollary_gamma} we obtain the following generalization of
Theorem~\ref{theorem_Margie3}. 
\begin{theorem}
Let $\mathcal{B}$ be a connected chordal building set on $[n+1]$.
The coefficient of $x^{k}$ in the toric $g$-polynomial
of the nestohedron $P_{\mathcal{B}}$ is the number of parking trees
$T$ satisfying the following two conditions:
\begin{enumerate}
\item $T$ encodes a $123$-avoiding parking function $f:[n] \longrightarrow
  [n]$  having exactly $k$ ascents.
\item Removing the labeling on the edges of $T$ results in the
  right-adjusted Foata--Strehl tree of a $\mathcal{B}$-permutation
  having no double descents and no final descent.  
\end{enumerate}
In the above statements transform each $0$-$1$-$2$ tree
into a right-adjusted
Foata--Strehl tree by declaring the only child of a parent
to be a right child. 
\label{theorem_Margie4}
\end{theorem}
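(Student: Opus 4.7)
The plan is to parallel the proof of Theorem~\ref{theorem_Margie3} with the permutahedron replaced by the nestohedron $P_{\mathcal{B}}$, using Corollary~\ref{corollary_gamma} in place of the Shapiro--Getu--Woan count of increasing plane $0$-$1$-$2$ trees. First, invoke Theorem~\ref{theorem_peaks} to write
\[
g(P_{\mathcal{B}},x) \;=\; \sum_{j=0}^{\lfloor n/2\rfloor} \gamma_{j} \cdot g_{n,j}(x),
\]
where $\gamma_{j}$ denotes the $j$th entry of the $\gamma$-vector of $P_{\mathcal{B}}$. Then, by Corollary~\ref{corollary_gamma}, $\gamma_{j}$ is precisely the number of right-adjusted Foata--Strehl trees having $j$ forks whose associated permutation is a $\mathcal{B}$-permutation. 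Via Definition~\ref{definition_right-adjusted}, each such tree corresponds bijectively to a plane increasing $0$-$1$-$2$ tree $T$ on $n+1$ vertices with exactly $j$ forks.

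For any such $T$, the sibling type $B$ in the sense of Definition~\ref{definition_type_B_tree} is a sparse subset of $[n-1]$ of cardinality $j$. The crucial step is now to apply Corollary~\ref{corollary_gamma_contribution}: the coefficient of $x^{k}$ in $g_{n,j}(x) = g_{n,|B|}(x)$ equals the number of edge labelings of $T$ that extend it to a parking tree encoding a $123$-avoiding parking function with exactly $k$ ascents. Summing over all vertex-labeled trees $T$ counted by $\gamma_{j}$ and over $j$, the coefficient of $x^{k}$ on the left-hand side becomes the total number of pairs (admissible vertex-labeled $T$, compatible edge labeling), which is exactly the cardinality of the set of parking trees satisfying conditions (1) and (2) of the theorem.

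The main obstacle is not a deep one; it is essentially the verification that Corollary~\ref{corollary_gamma_contribution} applies uniformly to any vertex-labeled increasing plane $0$-$1$-$2$ tree $T$, and hence in particular to those whose underlying permutation is a $\mathcal{B}$-permutation. The count $[x^{k}]\,g_{n,j}(x)$ is intrinsic to $j$ and is insensitive to the particular shape of $T$, so the restriction from all vertex-labeled trees to those that come from $\mathcal{B}$-permutations affects only the outer sum (through the value of $\gamma_{j}$) and not the inner count of valid edge labelings. Once this independence is noted, the proof reduces to a clean superposition of Theorem~\ref{theorem_peaks}, Corollary~\ref{corollary_gamma}, and Corollary~\ref{corollary_gamma_contribution}.
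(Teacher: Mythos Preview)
Your proposal is correct and matches the paper's intended argument: the paper explicitly omits the proof, stating that it ``is a straightforward adaptation of the proof of Theorem~\ref{theorem_Margie3},'' and your write-up carries out precisely that adaptation, replacing the Shapiro--Getu--Woan/Bergeron--Flajolet--Salvy count of increasing plane $0$-$1$-$2$ trees by Corollary~\ref{corollary_gamma} and then invoking Corollary~\ref{corollary_gamma_contribution} and Theorem~\ref{theorem_peaks} exactly as in the permutahedron case.
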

The proof is a straightforward adaptation of the proof of
Theorem~\ref{theorem_Margie3} and is therefore omitted.

\begin{example}
{\em The $n$-dimensional cube is combinatorially equivalent to the
{\em Stanley--Pitman polytope} of the same dimension. As shown
in~\cite[Subsection~10.5]{Postnikov_Reiner_Williams}, the 
Stanley--Pitman polytope is a chordal nestohedron $P_{\mathcal{B}}$,
where the building set is given by
\begin{align*}
\mathcal{B}
& =
\{\{i\}, [i,n+1] : 1 \leq i \leq n+1\} .
\end{align*}
The $\mathcal{B}$-permutations are exactly the permutations
$\pi \in \SSSS_{n+1}$ satisfying
$\pi(1) < \pi(2) < \cdots < \pi(k) > \pi(k+1) > \cdots > \pi(n+1)$ for
some $k \in [n+1]$. The only $\mathcal{B}$-permutation with no double
descent and no final descent is the identity permutation.
Its Foata-Strehl tree is the right-adjusted path
$1$--$2$--$\cdots$--$(n+1)$. Labeling  the edges amounts to selecting a
permutation $\tau$ of $[n]$ where 
$\tau(i)$ is the label of the edge $(i,i+1)$. The encoded parking
function is $123$-avoiding if and only if the permutation $\tau$ is
$123$-avoiding. Hence we recover
Corollary~\ref{corollary_g_of_cube}.
Keeping Remark~\ref{remark_Btrees} in mind, notice that all Foata--Strehl
trees associated to $\mathcal{B}$-permutations are paths, whereas
$\mathcal{B}$-trees $T_I$ defined
in~\cite[Subsection~10.5]{Postnikov_Reiner_Williams} have definitely
more than one path when a proper subset $I$ of $[n]$ is considered.
}   
\end{example}  

\begin{example}
{\em The associahedron is discussed
in~\cite[Subsection~10.5]{Postnikov_Reiner_Williams}, where it is
represented as a chordal nestohedron $P_{\mathcal{B}}$
with the building set given by
\begin{align*}
\mathcal{B}
& =
\{[i,j] : 1 \leq i \leq j \leq n+1\}.
\end{align*}  
In this case
$\mathcal{B}$-permutations are exactly the $312$-avoiding
permutations. It is easy to check, and left to the reader, that a
permutation is $312$-avoiding if and only if the vertices of the
corresponding Foata--Strehl tree are labeled in the depth-first search
order. In the proof of Theorem~\ref{theorem_parkingtrees} we established
that every parking function has a unique parking tree representation
such that the vertices are labeled in the depth-first search
order. Parking trees of $123$-avoiding parking functions are plane
$0$--$1$--$2$ trees which we may identify with right-adjusted
Foata--Strehl trees. Hence we recover Theorem~\ref{theorem_Margie}.   
Keeping Remark~\ref{remark_Btrees} in mind, 
in this particular case
$\mathcal{B}$-trees coincide with Foata--Strehl trees
of $\mathcal{B}$-permutations.
}   
\end{example}

\begin{example}
{\em The permutahedron is discussed
in~\cite[Subsection~10.1]{Postnikov_Reiner_Williams}.
Here the building set is
\begin{align*}
\mathcal{B}
& =
2^{[n+1]} - \{\emptyset\}
=
\{I \subseteq [n+1]: I \neq \emptyset\} .
\end{align*}
Here every permutation is a $\mathcal{B}$-permutation and
Theorem~\ref{theorem_Margie4} specializes to
Theorem~\ref{theorem_Margie3}.
Keeping Remark~\ref{remark_Btrees} in mind,
the $\mathcal{B}$-trees here are linear orders (labeled paths),
whereas all Foata--Strehl trees appear in this example.       
}   
\end{example} 

\begin{question}
{\em
Consider the graph on the vertex set $[n+1]$
where $(i,j)$ is an edge if $\max(i,j) \geq r+1$.
The edge set of this graph is $E(K_{n+1}) - E(K_{r})$
where the subgraph $K_{r}$ is on the vertex set~$[r]$.
The associated building set is chordal and is given by
\begin{align*}
{\mathcal B}
& =
\{\{i\} : i \in [r]\}
\cup
\{I \subseteq [n+1] : \max(I) \geq r+1\} .
\end{align*}
The associated nestohedra interpolate between the stellohedron
when $r=n$
(see~\cite[Subsection~3.5.4]{Aisbett},
\cite[Subsection~10.4]{Postnikov_Reiner_Williams}
and~\cite[Section~5]{Shareshian_Wachs})
and the permutahedron when $r=1$.
A permutation~$\pi$ is a $\mathcal{B}$-permutation
if its longest initial segment of elements less than or equal to $r$
is increasing.
By removing this initial segment, we have
a bijection between the set of $\mathcal{B}$-permutations
and {\em partial permutations}~$\tau$ of $[n+1]$
where the ``missing elements'' of $\tau$
are from the set~$[r]$.
For example, for $n=8$, $r=6$ and $\pi=(1,5,7,4,9,2,6,3,8)$ we
have $\tau=(7,4,9,2,6,3,8)$. 
Comparing the Foata--Strehl tree of $\pi$ with
the Foata--Strehl tree of $\tau$, the latter one is obtained from the
former by removing the right-adjusted path $1$--$5$--$7$.
Are there explicit expressions for the $\gamma$-
and the toric $g$-vectors of the associated nestohedron?
}   
\end{question}

\section{Concluding remarks}
\label{section_Concluding_remarks}

By a result of Billera, Chan and Liu~\cite{Billera_Chan_Liu} (see also
Corollary~\ref{corollary_nccomplex}) the coefficients in the toric
$g$-polynomial of the cube are face numbers of a simplicial complex. Can
this result be generalized to other simple polytopes? As stated in
Conjecture~\ref{conjecture_simplicial}, we suspect that
the answer is ``yes'':  perhaps one can combine the proof of
Theorem~\ref{theorem_Margie3} and the constructions of Nevo and
Petersen~\cite{Nevo_Petersen} who proved that the $\gamma$-vectors of
several simple polytopes are $f$-vectors of simplicial complexes.

Besides Conjecture~\ref{conjecture_simplicial}
that we suspect to hold for structural reasons,
strong numerical evidence supports the following two conjectures.
\begin{conjecture}
The toric $g$-contribution polynomials~$g_{n,j}(x)$
for $0 \leq j \leq \lfloor n/2 \rfloor$
are real-rooted.
\end{conjecture}

\begin{conjecture}
The $g$-polynomials of
the $n$-dimensional cyclohedron
and chordal nestohedra are real-rooted.
\end{conjecture}

Some comments concerning the occurrence of parking trees in the
literature are in order. 

To obtain the labeled trees introduced in~\cite[Section 13.2.3]{Yan_chapter}
from our representations, one needs to take the mirror images of our
illustrations, move each of our edge labels to the lower end of the
labeled edge (this idea of label shifting also appears
in~\cite{Irving_Rattan}), and label the root with~$0$.   Our labeling of
the vertices is 
called a {\em vertex ordering} in~\cite[Section
  13.2.3]{Yan_chapter}. Yan defines these orderings as the output of a
recursive process which relies on setting a {\em choice function}
first.  It is not hard to verify that the resulting vertex labelings all
have the increasing property stated in
Definition~\ref{definition_increasing_vertex_labeling}. Note that the class of
parking trees we consider in Sections~\ref{section_permutahedron} and
\ref{section_chordal_nestohedra} is broader: we need to use {\em all}
increasing vertex labelings, not only the ones that arise as the output 
of some recursive process. 

Generalizations of~\cite[Example~13.13]{Yan_chapter},
respectively~\cite[Example~13.14]{Yan_chapter}, to $d$-parking trees
may be found in~\cite{Ehrenborg_Hetyei} and in~\cite[Section~6]{Hetyei_genlin},
respectively. 
A key feature of all parking trees discussed in~\cite[Section
  13.2.3]{Yan_chapter} is that $a_i=a_j$ holds in the parking function
exactly when $i$ and $j$ label siblings in the parking tree. This
property is {\em not present} in the parking trees of
Kreweras~\cite{Kreweras_suites} and
Knuth~\cite[Section~13.2.2]{Yan_chapter}, nor in the $d$-parking trees
of Irving and Rattan~\cite{Irving_Rattan}.

\section*{Acknowledgments}

The second author thanks the Department of Mathematics at
the University of Kentucky for research visits.
The authors thank Andrew Howroyd for suggesting
Corollary~\ref{corollary_associahedron_2m_m}.
This work was partially supported by grants from the
Simons Foundation
(\#429370 to Richard~Ehrenborg,
\#245153 and \#514648 to G\'abor~Hetyei), and by the NSF grant
DMS-2247382 to Margaret~Readdy. 

\appendix

\section{\L ukasziewicz words}

\begin{proposition}
\label{proposition_BFS}
Given a \L ukasziewicz word $f_{q_{1}}f_{q_{2}}\cdots f_{q_{n}} f_0$ of length
$n+1$, there is a unique rooted plane tree on $n+1$ vertices whose
vertices labeled in the breadth-first search order have the property
that the number of children of the vertex labeled $i$ is $q_{i}$. 
\end{proposition}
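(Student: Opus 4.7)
The plan is to build the tree greedily level-by-level in the breadth-first search order, and argue that the \L ukasziewicz condition guarantees the construction succeeds and terminates at the right moment, while the determinism of the BFS labeling forces uniqueness. Setting $q_{n+1}=0$ for notational convenience, one has $\sum_{i=1}^{n+1} q_{i}=n$ from the fact that the full word has weight $-1$.

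For existence, I would describe an explicit construction. Start with a root labeled $1$; attach $q_{1}$ ordered children to it and label them $2,3,\ldots,q_{1}+1$ from left to right. Inductively, once the vertices with labels $1,2,\ldots,j$ have been created together with the children of the vertices labeled $1,\ldots,k-1$ (all labeled up to $j=1+\sum_{i=1}^{k-1}q_{i}$), proceed to the vertex labeled $k$ and attach $q_{k}$ ordered children to it, labeled $j+1,\ldots,j+q_{k}$ left to right. To show that vertex $k$ already exists at this stage whenever $k\leq n+1$, I need the inequality $\sum_{i=1}^{k-1} q_{i}+1\geq k$, i.e., $\sum_{i=1}^{k-1}(q_{i}-1)\geq 0$, which is exactly the initial-factor condition in the definition of a \L ukasziewicz word. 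After processing $k=1,2,\ldots,n+1$, the total number of vertices created is $1+\sum_{i=1}^{n+1}q_{i}=n+1$, so the construction terminates with the correct vertex count. Moreover, since $q_{n+1}=0$, the last vertex labeled $n+1$ is a leaf and no unlabeled child-slots remain. A direct check shows that the labeling produced is exactly the breadth-first search labeling of Definition~\ref{definition_DFS_BFS} (the child of smallest-labeled eligible vertex is labeled next), and by construction the vertex labeled~$i$ has $q_{i}$ children.

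For uniqueness, suppose $T$ is any rooted plane tree on $n+1$ vertices whose BFS labeling assigns $q_{i}$ children to the vertex labeled $i$. The BFS rule together with the plane structure uniquely determines which vertex receives each successive label: at each stage the next label goes to the leftmost unlabeled child of the smallest-labeled vertex with an unlabeled child. Hence, once the degrees $q_{1},\ldots,q_{n+1}$ along the BFS order are prescribed, every structural choice in $T$ is forced and coincides with the construction above.

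The main obstacle, as in the analogous depth-first statement, is matching the termination of the process with the length of the word: one must verify that the \L ukasziewicz inequalities $\sum_{i=1}^{k-1}(q_{i}-1)\geq 0$ for $k\leq n+1$ give exactly what is needed to keep a vertex available to receive each next label, while the global weight identity $\sum_{i=1}^{n+1}(q_{i}-1)=-1$ ensures the process ends with $n+1$ vertices and no dangling child-slots. The rest is bookkeeping, and the argument parallels the depth-first case from~\cite[Section~I.5.3]{Flajolet} with the priority rule on vertices changed from ``largest label'' to ``smallest label''.
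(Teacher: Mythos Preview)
Your proposal is correct and follows essentially the same approach as the paper's own proof: both run the breadth-first search labeling process and use the partial-sum inequalities $\sum_{i=1}^{k-1}(q_i-1)\ge 0$ from the \L ukasziewicz condition to guarantee that vertex~$k$ is already present when it is time to attach its children, while the global weight identity $\sum_{i=1}^{n+1}(q_i-1)=-1$ forces the process to terminate with exactly $n+1$ vertices. The paper packages the same bookkeeping via the quantity $m(i)-i$ (labeled-but-unvisited vertices) and the level function $\ell$, but the content is identical.
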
  
\begin{proof}
We show the statement by analyzing the breadth-first search labeling
process.
For each $0 \leq i \leq n$ let $m(i)$ be the number
of vertices labeled after visiting the first $i$ vertices and labeling
their children. At the beginning of the process only the root vertex is
labeled $1$, but not yet visited, hence we set $m(0)=1$ for
the empty word $\varepsilon$.
The quantity $m(i)$ is given by
adding the number of children of each visited vertex, that is, we have
\begin{equation}
\label{equation_labeled}  
m(i)=1+w(f_{q_{1}})+\cdots+w(f_{q_{i}})
\quad\text{where $w(f_{q})=q-1$.} 
\end{equation}
After visiting the first $i$ vertices, the number of vertices already
labeled but not yet visited is the difference $m(i)-i$.
The labeling process can continue only if this number
is positive at each stage of the process.
Furthermore, at the end of the process we must have
$m(n)=n+1$, that is, $m(n)-n=1$.
Conversely, if the numbers $m(i)$
satisfy the above mentioned conditions, we can use the breadth-first
search labeling process to reconstruct our rooted plane tree in a
unique fashion.
Let us define the {\em level} $\ell(f_{q_{1}}\cdots f_{q_{i}})$
of the word $f_{q_{1}}\cdots f_{q_{i}}$ by setting
\begin{align*}
\ell(f_{q_{1}}\cdots f_{q_{i}}) & =m(i)-i-1.
\end{align*}
By~\eqref{equation_labeled} and the above reasoning, the word
$f_{q_{1}}\cdots f_{q_{n}}f_{0}$ encodes a rooted plane tree if and only if
$\ell(f_{q_{1}}\cdots f_{q_{i}})\geq 0$ holds for $i\in [n]$ and  we have
$\ell(f_{q_{1}}\cdots f_{q_{n}}f_{0})=0$. This is precisely the condition requiring
that the word is a \L ukasziewicz word of length $n+1$.   
\end{proof}

\newcommand{\journal}[6]{{\sc #1,} #2, {\it #3} {\bf #4} (#5), #6.}
\newcommand{\book}[4]{{\sc #1,} ``#2,'' #3, #4.}
\newcommand{\bookf}[5]{{\sc #1,} ``#2,'' #3, #4, #5.}
\newcommand{\arxiv}[3]{{\sc #1,} #2, {\tt #3}.}
\newcommand{\preprint}[3]{{\sc #1,} #2, preprint {(#3)}.}
\newcommand{\oeis}[3]{{\sc #1,} #2, {#3}.}
\newcommand{\preparation}[2]{{\sc #1,} #2, in preparation.}
\newcommand{\dissertation}[4]{{\sc #1,} ``#2,'' #3, #4.}

\end{document}